\newcommand{\R}{\mathbb{R}}
\newcommand{\Z}{\mathbb{Z}}
\newcommand{\N}{\mathbb{N}}
\newcommand{\C}{\mathbb{C}}
\newcommand{\T}{\mathbb{T}}
\newcommand{\holo}{\textnormal{Hol}}
\newcommand{\heis}{\mathbb{H}^1}
\newcommand{\hlapl}{\Delta_{\heis}}
\newcommand{\id}{\textnormal{Id}}
\newcommand{\classeC}{\mathcal{C}}
\renewcommand{\Im}{\textnormal{Im}}
\renewcommand{\Re}{\textnormal{Re}}
\newcommand{\un}{\mathds{1}}
\newcommand{\hardy}{\mathcal{H}^2(\mathbb{C}_+)}
\newcommand{\scw}{\mathscr{S}}
\newcommand{\half}{\frac{1}{2}}
\newcommand{\Qbeta}{\mathcal{Q}_{\beta}}
\newcommand{\M}{\mathcal{M}}
\renewcommand\d{\,{\mathrm d}}
\newcommand\e{\,{\mathrm e}}
\newcommand{\gdO}{{\mathcal O}}
\newtheorem{mydef}{Definition}[section]
\newtheorem{thm}[mydef]{Theorem}
\newtheorem{lem}[mydef]{Lemma}
\newtheorem{prop}[mydef]{Proposition}
\title{On the orbital stability of a family of traveling waves for the cubic Schrödinger equation on the Heisenberg group.}
\author{Louise Gassot}
\date{}
\newcommand{\Addresses}{{
  \bigskip
  \footnotesize
\noindent
  \textsc{Département de mathématiques et applications, École normale supérieure, CNRS, PSL University, 75005 Paris, France\\Laboratoire de Mathématiques d’Orsay, Univ. Paris-Sud, CNRS, Université Paris-Saclay, 91405 Orsay, France}\par\nopagebreak
  \noindent
  \textit{E-mail address :} \texttt{louise.gassot@math.u-psud.fr}
}}
\begin{document}
\maketitle
\abstract{We consider the focusing energy-critical Schrödinger equation on the Heisenberg group in the radial case
\[
i\partial_t u-\Delta_{\mathbb{H}^1} u=|u|^2u,
\quad	\Delta_{\mathbb{H}^1}=\frac{1}{4}(\partial_x^2+\partial_y^2)+(x^2+y^2)\partial_s^2,
\quad	(t,x,y,s)\in \mathbb{R}\times\mathbb{H}^1,
\]
which is a model for non-dispersive evolution equations. For this equation, existence of smooth global solutions and uniqueness of weak solutions in the energy space are open problems. We are interested in a family of ground state traveling waves parametrized by their speed $\beta\in(-1,1)$. We show that the traveling waves of speed close to $1$ present some orbital stability in the following sense. If the initial data is radial and close enough to one traveling wave, then there exists a global weak solution which stays close to the orbit of this traveling wave for all times. A similar result is proven for the limiting system associated to this equation.
}

\tableofcontents

\newpage
\section{Introduction}

\subsection{Motivation}\label{subsection:motivation}
We are interested in the Schrödinger equation on the Heisenberg group
\begin{equation}\label{eq:H}
\begin{cases}
i\partial_t u-\hlapl u=|u|^2u
\\
u(t=0)=u_0
\end{cases},\quad (t,x,y,s)\in\R\times\heis.
\end{equation}
The operator $\hlapl$ denotes the sub-Laplacian on the Heisenberg group. When the solution is radial, in the sense that it only depends on $t$, $|x+iy|$ and $s$, the sub-Laplacian writes
\[
\hlapl =\frac{1}{4}(\partial_x^2+\partial_y^2)+(x^2+y^2)\partial_s^2.
\]

The Heisenberg group is a typical case of sub-Riemannian geometry where dispersive properties of the Schrödinger equation disappear (see Bahouri, Gérard and Xu \cite{BahouriGerardXu2000}).
 To take it further, Del Hierro \cite{DelHierro2005} proved sharp decay estimates for the Schrödinger equation on H-type groups, depending on the dimension of the center of the group. More generally, Bahouri, Fermanian and Gallagher \cite{BahouriFermanianGallagher2016} proved optimal dispersive estimates on stratified Lie groups of step $2$ under some property of the canonical skew-symmetric form. In contrast, they also give a class of groups without this property displaying total lack of dispersion, which includes the Heisenberg group.

Dispersion impacts the way one can address the Cauchy problem for the Schrödinger equation. Indeed (see Burq, Gérard and Tzvetkov \cite{Burq2005}, remark 2.12), the existence of a smooth local in time flow map on the Sobolev space $H^s(M)$ for the Schrödinger equation on a Riemaniann manifold $M$ with Laplace-Beltrami operator $\Delta$
\[
\begin{cases}
i\partial_t u-\Delta u=|u|^2u
\\
u(t=0)=u_0
\end{cases}
\]
implies the following Strichartz estimate
\[
\|\e^{it\Delta}f\|_{L^4([0,1]\times M)}\leq C\|f\|_{H^{\frac{s}{2}}(M)}.
\]
The argument also applies for the Heisenberg group with the homogeneous Sobolev spaces $\dot{H}^s(\heis)$, for which the inequality holds if and only if $s\geq 2$ \cite{GerardGrellier2008}. In particular, uniqueness of weak solutions in the energy space $\dot{H}^1(\heis)$ is an open problem, whereas without a conservation law controlling the $\dot{H}^2$-norm, there is no existence existence result of smooth global solutions. Constructing weak global solutions to the Schrödinger equation on the Heisenberg group is still possible through a compactness argument, nevertheless, this method does not ensure that these solutions are relevant. Indeed, the energy of the solution is only bounded above by the initial energy. Therefore, the cancellation of the energy of the solution at some time may not imply that the solution is identically zero, and we do not exclude the possibility of non-uniqueness of weak solutions as in the 2D incompressible Euler equation \cite{DeLellisSzekelyhidi2009}.

The aim of this paper is to construct some weak global solutions with a prescribed behaviour. More precisely, given an initial data close to some ground state traveling wave solution for the Schrödinger equation on the Heisenberg group, we want to construct a weak global solution which stays close to the orbit of the traveling wave for all times. Combined with an uniqueness result, this would lead to the orbital stability of this ground state traveling wave.

\subsection{Main results}

We consider a family of traveling waves with speed $\beta\in(-1,1)$ under the form
\[
u_\beta(t,x,y,s)=\sqrt{1-\beta}Q_\beta(x,y,s+\beta t).
\]
The profile $Q_\beta$ satisfies the following stationary hypoelliptic equation
\begin{equation*}
-\frac{\hlapl +\beta D_s}{1-\beta}Q_\beta=|Q_\beta|^2Q_\beta.
\end{equation*}
Because of the scaling invariance, it would have been equivalent in the rest of the study to define $Q_\beta$ as
\[
u_\beta(t,x,y,s)=Q_\beta\left(\frac{x}{\sqrt{1-\beta}},\frac{y}{\sqrt{1-\beta}},\frac{s+\beta t}{1-\beta}\right).
\]
From \cite{schroheis}, we know that as $\beta$ tends to $1$, the ground state solutions of speed $\beta$ converge up to symmetries in $\dot{H}^1(\heis)$ to some profile $Q$. Moreover, $Q$ is solution to a limiting equation
\begin{equation}\label{eq:Hplus}
D_sQ=\Pi_0^+(|Q|^2Q),
\end{equation}
for which the ground state solution is unique up to symmetries, equal to
\[
Q(x,y,s)=\frac{i\sqrt{2}}{s+i(x^2+y^2)+i}.
\]
The operator $\Pi_0^+$ is an orthogonal projector onto a relevant space for our analysis denoted by $V_0^+$. For more details, see the Notation part \ref{notation}.

We first prove the conditional orbital stability of the ground state $Q$ in the limiting equation, and then focus on the conditional orbital stability of the ground states $Q_\beta$ in the Schrödinger equation on the Heisenberg group as $\beta$ is close to $1$.

\begin{mydef}
Fix $u\in\dot{H}^1(\heis)$ and $X=(s_0,\theta,\alpha)\in\R\times\T\times\R_+^*$. We denote by $T_Xu$ the element of $\dot{H}^1(\heis)$ satisfying
\[
T_Xu(x,y,s):=\e^{i\theta}\alpha u(\alpha x,\alpha y,\alpha^2(s-s_0)), \quad (x,y,s)\in\heis.
\]
We also define
\[
d(u,\M)=\inf_{X\in \R\times\T\times\R_+^*}\|T_Xu-Q\|_{\dot{H}^1(\heis)},
\]
as the distance of $u$ to the orbit $\M$ of $Q$
\[
\M=\{T_XQ\mid X\in \R\times\T\times\R_+^*\},
\]
and
\[
d(u,\sqrt{1-\beta}\Qbeta)=\inf_{X\in \R\times\T\times\R_+^*}\|T_Xu-\sqrt{1-\beta}Q_\beta\|_{\dot{H}^1(\heis)},
\]
as the distance of $\frac{u}{\sqrt{1-\beta}}$ to the orbit $\Qbeta$ of $Q_\beta$
\[
\Qbeta=\{T_XQ_\beta \mid X\in\R\times\T\times\R_+^*\}.
\]
\end{mydef}

Our first result is that the profile $Q$ is conditionally orbitally stable for the evolution problem linked to the limiting equation
\begin{equation}\label{eq:bergman_evolution}
\begin{cases}
i\partial_t u=\Pi_0^+(|u|^2u)
\\
u(t=0)=u_0
\end{cases}.
\end{equation}
\begin{thm}[Conditional orbital stability of $Q$]\label{thm:stability_Q}
For some $c_0>0$ and $r_0>0$ the following holds. Let $r\leq r_0$ and $u_0\in\dot{H}^1(\heis)\cap V_0^+$ such that
\begin{equation}\label{eq:initial_cond_Q}
\|u_0-Q\|_{\dot{H}^1(\heis)}< r^2.
\end{equation}
Then there exists a weak solution $u\in\classeC(\R,\dot{H}^1(\heis))$ (with the weak topology) to equation \eqref{eq:bergman_evolution} such that for all $t\in\R$,
\[
d(u(t),\M)\leq c_0r.
\]
\end{thm}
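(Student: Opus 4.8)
The plan is to combine a variational/Lyapunov analysis around $Q$ with an approximation scheme that produces the weak solution. First I would record the three quantities that are formally conserved by \eqref{eq:bergman_evolution} on $V_0^+$: the mass $M(u)=\|u\|_{L^2}^2$, the momentum $P(u)=\langle D_s u,u\rangle$, and the energy $H(u)=\frac14\|u\|_{L^4}^4$, the last being the Hamiltonian. The identities $\frac{d}{dt}M=\frac{d}{dt}P=0$ follow from a direct computation using $u\in V_0^+$ (so $\Pi_0^+u=u$) and integration by parts in $s$. The crucial structural observation is the scaling behaviour: under $T_{(0,0,\alpha)}$ one has $M\mapsto \alpha^{-2}M$ while both $P$ and $H$ are scale invariant. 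This energy-critical degeneracy is what will ultimately dictate the loss in the estimate.

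Next I would exploit the variational characterization of $Q$ coming from \cite{schroheis} and the uniqueness of the ground state of \eqref{eq:Hplus} up to the symmetries $T_X$: the orbit $\M$ is exactly the set of optimizers of the associated sharp functional inequality on $V_0^+$. I would then form the Lyapunov functional $\mathcal{L}=H+\lambda M+\mu P$, with Lagrange multipliers $\lambda,\mu$ chosen so that $\mathcal{L}'(Q)=0$, and establish coercivity of its second variation, namely $\langle \mathcal{L}''(Q)v,v\rangle\gtrsim \|v\|_{\dot{H}^1(\heis)}^2$ for every $v\in V_0^+$ orthogonal to the tangent space $T_Q\M$ (spanned by the phase, the $s$-translation and the scaling generators). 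The spectral input here is that the kernel of $\mathcal{L}''(Q)$ is exactly $T_Q\M$ and that imposing the constraints removes any negative direction; I expect this to rely on the explicit form of $Q$ and on the linearized analysis already carried out in the paper.

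With coercivity in hand I would set up the modulation: for $u$ in an $\dot{H}^1(\heis)$-neighbourhood of $\M$, the implicit function theorem provides parameters $X(u)=(s_0,\theta,\alpha)$ such that $v:=T_{X(u)}u-Q$ is orthogonal to $T_Q\M$, with $\|v\|_{\dot{H}^1(\heis)}\simeq d(u,\M)$. To produce the solution itself I would regularize \eqref{eq:bergman_evolution}, for instance by a frequency truncation $\Pi_{\leq N}$ compatible with $V_0^+$, obtaining globally defined approximations $u_N$ that conserve $M,P,H$ exactly and enjoy uniform $\dot{H}^1(\heis)$ bounds. Combining the Taylor expansion $\mathcal{L}(Q+v)-\mathcal{L}(Q)=\frac12\langle \mathcal{L}''(Q)v,v\rangle+O(\|v\|^3)$ with the conservation $\mathcal{L}(u_N(t))=\mathcal{L}(u_N(0))$ and the quantitative closeness of $M,P,H$ at $u_0$ to their values at $Q$, a continuity/bootstrap argument keeps $u_N(t)$ in the neighbourhood and bounds $d(u_N(t),\M)$ uniformly in $t$ and $N$. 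Finally I would pass to a weak limit $u_N\rightharpoonup u$ in $\classeC(\R,\dot{H}^1(\heis))$ with the weak topology, verify that $u$ solves \eqref{eq:bergman_evolution} weakly and remains in $V_0^+$, and conclude from the weak lower semicontinuity of $u\mapsto d(u,\M)$.

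The main obstacle is the interplay between the scaling symmetry and the fact that only a weak solution is available. Since $P$ and $H$ are scale invariant, the conserved quantities control the transverse part $v$ cleanly only modulo the scaling direction, so the parameter $\alpha$ has to be pinned through the mass $M$, the sole scale-sensitive conserved quantity. Upon passing to the weak limit the conservation laws degrade to inequalities — weak lower semicontinuity allows the mass and energy to drop — so the scale can be controlled only at order $\sqrt{\,\cdot\,}$ of the deficit. This is precisely the mechanism turning the quadratic hypothesis $\|u_0-Q\|_{\dot{H}^1(\heis)}<r^2$ into the linear conclusion $d(u(t),\M)\leq c_0 r$. Quantifying the scaling drift and ensuring that the coercivity survives these weak-limit inequalities is the heart of the argument, the remainder being standard modulation machinery.
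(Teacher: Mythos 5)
Your proposal contains two genuine gaps, one structural and one at the decisive final step.

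First, the conserved mass $M(u)=\|u\|_{L^2}^2$ is not available in this problem: the ground state $Q(x,y,s)=\frac{i\sqrt 2}{s+i(x^2+y^2)+i}$ has \emph{infinite} mass (the paper points this out explicitly when comparing with the half-wave equation), and the initial data is only assumed to lie in $\dot{H}^1(\heis)\cap V_0^+$, so $M(u_0)$ need not be finite either. Consequently the Lyapunov functional $H+\lambda M+\mu P$ is ill-defined, and your mechanism of ``pinning the scale through the mass'' collapses — note also that the ground state equation $D_sQ=\Pi_0^+(|Q|^2Q)$ has no zero-order term, so the multiplier $\lambda$ would have to vanish anyway. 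The paper avoids this entirely: its key input (Proposition \ref{prop:estimates_stability}, imported from \cite{schroheis}) bounds $d(u,\M)^2$, a distance taken \emph{modulo the full symmetry group including scaling}, by the scale-invariant deficit $\delta(u)$ in momentum and energy alone; no conserved quantity needs to fix $\alpha$. Relatedly, the quadratic-to-linear loss ($\|u_0-Q\|<r^2$ yielding $d\leq c_0r$) does not come from conservation laws degrading under weak limits, as you claim, but simply from the square root in $d(u,\M)^2\leq C\delta(u)$ applied to the approximate solutions, which conserve $\delta$ exactly.

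Second, and more seriously, your concluding step — ``conclude from the weak lower semicontinuity of $u\mapsto d(u,\M)$'' — is false. The symmetry group $\R\times\T\times\R_+^*$ is non-compact, and $T_XQ\rightharpoonup 0$ weakly as $|X|\to\infty$; hence $\M$ is not weakly closed and its distance function is not weakly lower semicontinuous. Concretely, $u_n:=T_{X_n}Q$ with $|X_n|\to\infty$ satisfies $d(u_n,\M)=0$ while the weak limit $0$ has $d(0,\M)=\|Q\|_{\dot{H}^1(\heis)}>0$. This failure is exactly the scenario your argument must exclude: the near-optimal modulation parameters of the approximating solutions could run off to infinity, letting the weak limit drift away from (or collapse relative to) the orbit. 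The paper's Section \ref{subsection:limiting_modulation} exists precisely to close this gap: it builds piecewise-constant parameters $X_n(t)$ on time intervals of length independent of $n$ (using the uniform bound on $\|\partial_t u_n(t)\|_{\dot{H}^{-\half}(\C_+)}$), shows via Lemma \ref{lem:gap_TXQ_Q} that consecutive parameters differ by a bounded amount, so that $(X_n(t))_n$ stays in a compact set for each fixed $t$, extracts a convergent subsequence $X_n(t)\to X(t)$, and only then passes to the weak limit in $\|u_n(t)-T_{X_n(t)}Q\|$, where the reference profile now converges \emph{strongly}. Without this boundedness-of-modulation argument, or a substitute for it, your proof does not reach the conclusion.
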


Using the links between the limiting equation and the Schrödinger equation, we deduce our second result : the conditional orbital stability of the profiles $Q_\beta$ for the Schrödinger equation when $\beta$ is close to $1$ in the radial case.
\begin{thm}[Conditional orbital stability of $Q_\beta$]\label{thm:stability_Qbeta}
For some $c_0>0$ and $r_0>0$ the following holds. Let $r\in(0,r_0)$. Then there exists $\beta_*\in(0,1)$ such that if $\beta\in(\beta_*,1)$, and if $u_0\in\dot{H}^1(\heis)$ is radial and satisfies
\begin{itemize}
\item if $u_0\in\dot{H}^1(\heis)\cap V_0^+$ :
\[
\|u_0-\sqrt{1-\beta}Q_\beta\|_{\dot{H}^1(\heis)}< \sqrt{1-\beta}r^2
\]
\item in the general case :
\begin{equation}\label{eq:intial_cond_Qbeta}
\|u_0-\sqrt{1-\beta}Q_\beta\|_{\dot{H}^1(\heis)}< (1-\beta)r,
\end{equation}
\end{itemize}
then there exists a weak radial solution $u\in\classeC(\R,\dot{H}^1(\heis))$ (with the weak topology) to the Schrödinger equation on the Heisenberg group \eqref{eq:H}
\begin{equation*}
\begin{cases}
i\partial_t u-\hlapl u=|u|^2u
\\
u(t=0)=u_0
\end{cases}
\end{equation*}
such that for all $t\in\R$, $\frac{u(t)}{\sqrt{1-\beta}}$ is close to the orbit of $Q_\beta$~:
\[
d\left(u(t),\sqrt{1-\beta}\Qbeta\right)\leq c_0\sqrt{1-\beta}r.
\]
\end{thm}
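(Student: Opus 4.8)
The plan is to transfer the orbital stability obtained for the limiting equation in Theorem~\ref{thm:stability_Q} to the Schrödinger equation \eqref{eq:H}, using that for $\beta$ close to $1$ the rescaled Heisenberg dynamics is a singular perturbation of \eqref{eq:bergman_evolution} concentrated on the subspace $V_0^+$. First I would remove the traveling motion by setting $w(t,x,y,s)=u(t,x,y,s-\beta t)$, which turns a solution of \eqref{eq:H} into a solution of $i\partial_t w=(\hlapl+\beta D_s)w+|w|^2w$ admitting $\sqrt{1-\beta}\,Q_\beta$ as a genuine stationary solution. Writing $w=\sqrt{1-\beta}\,V$ and rescaling time through $\tau=(1-\beta)t$, the profile $V$ solves
\[
i\partial_\tau V=\frac{\hlapl+\beta D_s}{1-\beta}\,V+|V|^2V.
\]
On $V_0^+$ the sub-Laplacian acts as $-D_s$, so the operator $\frac{\hlapl+\beta D_s}{1-\beta}$ restricts there to $-D_s$ and the equation coincides with \eqref{eq:bergman_evolution} written in the frame where $Q$ is stationary; on the orthogonal complement of $V_0^+$ the same operator has eigenvalues of size $\sim(1-\beta)^{-1}$. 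Thus the rescaled flow agrees with the limiting flow on the lowest level and strongly penalizes any component leaving $V_0^+$, which is the precise sense of the link between the two equations.

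Second, I would set up the variational structure. The traveling-frame equation is Hamiltonian, with a conserved energy equal in the original frame to $E(u)+\beta P(u)$, where $E$ is the energy of \eqref{eq:H} and $P(u)=\langle D_s u,u\rangle$ is the momentum attached to the $s$-translation invariance; the profile $\sqrt{1-\beta}\,Q_\beta$ is exactly a critical point of this functional, and after rescaling it becomes the Lyapunov functional used for $Q$ in Theorem~\ref{thm:stability_Q}. The decisive input is the coercivity established there: the second variation controls $\|\epsilon\|_{\dot{H}^1(\heis)}^2$ modulo the three symmetry directions ($s$-translation, phase, scaling). Combining this with the convergence $Q_\beta\to Q$ in $\dot{H}^1(\heis)$ and with the spectral gap of $\frac{\hlapl+\beta D_s}{1-\beta}$ off $V_0^+$, I would upgrade it to a coercivity estimate for the full functional that is uniform for $\beta$ close to $1$, the gap supplying the control of the part of the perturbation orthogonal to $V_0^+$.

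Third, I would run a modulation and continuity argument on $V$. Using the implicit function theorem I extract parameters $X(\tau)\in\R\times\T\times\R_+^*$ and write $V=T_{X(\tau)}(Q_\beta+\epsilon)$ with $\epsilon$ orthogonal to the symmetry directions, then split $\epsilon=\epsilon_0+\epsilon_\perp$ along $V_0^+\oplus(V_0^+)^\perp$. Conservation of $E+\beta P$ together with the uniform coercivity bounds $\|\epsilon_0\|_{\dot{H}^1(\heis)}$ by the limiting mechanism and $\|\epsilon_\perp\|_{\dot{H}^1(\heis)}$ by the spectral gap, yielding the a priori estimate $d(u(t),\sqrt{1-\beta}\,\Qbeta)\lesssim\sqrt{1-\beta}\,r$ as long as the trajectory stays in the coercivity regime; a bootstrap then closes the estimate for all times. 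The two smallness thresholds correspond to the two ways of entering this regime: when $u_0\in V_0^+$ the orthogonal component vanishes initially and is only generated by the nonlinearity, so the weaker threshold $\sqrt{1-\beta}\,r^2$---which after rescaling is exactly the hypothesis of Theorem~\ref{thm:stability_Q}---suffices, whereas in the general case $\epsilon_\perp$ is present from the start and the spectral gap, measuring its energy cost at size $\sim(1-\beta)^{-1}$, forces the stronger threshold $(1-\beta)r$.

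Finally, since uniqueness and smooth global existence for \eqref{eq:H} are open, the solution must be produced by a compactness scheme: I would regularize the equation and the initial data in a way that preserves radiality and the conservation laws (for instance by truncating to finitely many levels), solve the regularized problems globally, propagate the same a priori bounds uniformly, and extract a weak-$\dot{H}^1(\heis)$ limit that is a weak solution of \eqref{eq:H} still satisfying the closeness estimate for every $t$. The hardest part will be the uniform-in-time control of the orthogonal component $\epsilon_\perp$: the cubic nonlinearity couples $V_0^+$ to its complement, so one must rule out a slow, secular transfer of energy out of the lowest level over infinite time, quantify it against the $(1-\beta)$-dependent budget, and verify that this control is not lost when passing to the weak limit.
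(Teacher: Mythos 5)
Your overall architecture --- rescale to \eqref{eq:H_rescaled}, split along $V_0^+\oplus(V_0^+)^\perp$, use the $(1-\beta)^{-1}$ spectral gap together with energy conservation to control the orthogonal component, use the limiting-equation stability for the $V_0^+$ component, and produce the solution by a truncation/compactness scheme --- is indeed the paper's architecture, and your explanation of the two smallness thresholds is the right one. But the stability mechanism as you state it has a genuine gap. Your second and third steps rest on a modulation decomposition $V=T_{X(\tau)}(Q_\beta+\epsilon)$ obtained by the implicit function theorem and on a claimed \emph{uniform coercivity of the second variation of the Lyapunov functional around $Q_\beta$ modulo the three symmetry directions}. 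No such coercivity around $Q_\beta$ is available here, and the paper explicitly identifies this route (the Gérard--Lenzmann--Pocovnicu--Raphaël strategy for the half-wave equation) as obstructed: only two conservation laws, energy and momentum, exist because the masses of the ground states are infinite, and the resulting gap $\delta_\beta(V)$ does \emph{not} control $d(V,\Qbeta)$ --- its vanishing does not even force $V\in\Qbeta$. Note also that the input you invoke from the limiting problem is weaker than you assume: Proposition~\ref{prop:estimates_stability} is a nonlinear variational estimate coming from invertibility of the linearization around $Q$ on a subspace of $V_0^+$ of \emph{finite} codimension, involving both conserved quantities, not a Hessian coercivity modulo three symmetry directions. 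The paper therefore never linearizes around $Q_\beta$ at all: it applies Proposition~\ref{prop:estimates_stability} directly to the component $U^+(t)\in V_0^+$ of the regularized solution, proving $\delta(U^+(t))\lesssim r^2$ by playing energy against momentum (including a reverse bound on $\|U^+(t)\|_{L^4(\heis)}^4$ through the sharp constant $I_+=\pi^2$ of the embedding $\dot H^1(\heis)\cap V_0^+\hookrightarrow L^4(\heis)$), and only then converts $d(U(t),\M)\le c_0r$ into $d(U(t),\Qbeta)\le c_0r$ using the proximity of $Q_\beta$ to $Q$. Your ``upgrade to uniform coercivity'' is precisely the step that is asserted rather than proved, and it is the step the whole paper is designed to circumvent.

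Two further points you gloss over are where the paper actually invests its technical work. First, qualitative convergence $Q_\beta\to Q$ in $\dot H^1(\heis)$ is not enough: with the hypothesis $\|U_0-Q_\beta\|_{\dot H^1(\heis)}<\sqrt{1-\beta}\,r$ one needs $\|Q_\beta-Q\|_{\dot H^1(\heis)}=o(\sqrt{1-\beta})$ so that this error is below $\sqrt{1-\beta}\,r$ for $\beta$ close to $1$ \emph{for every fixed small $r$}; the previously known rate was only $\gdO(\sqrt{1-\beta})$, and improving it to $\gdO((1-\beta)^{2-\varepsilon})$ is the object of Appendix~\ref{appendix}. Second, ``propagate the same a priori bounds uniformly'' hides the real uniformity problem: in the truncated spaces $H^2_n(\heis)$ the constants relating momentum to the $\dot H^1$ norm are $n$-dependent, and to obtain a threshold $\beta^*(r)$ independent of $n$ and $t$ the paper embeds the problem into the family \eqref{eq:H_rescaled_n_beta} indexed by $\gamma\in[\beta,1)$ with interpolated data $U_0^\gamma$ and runs a continuity argument in $\gamma$ (Section~\ref{subsection:limit_beta_n_fixed}); your proposal contains no substitute for this. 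Finally, you misplace the difficulty at the end: there is no ``secular transfer'' problem, since conservation of energy plus positivity of the quadratic form on $(V_0^+)^\perp$ controls $\|W(t)\|_{\dot H^1(\heis)}\lesssim\sqrt{1-\beta}$ simultaneously for all times. The delicate point in the weak-limit step is instead the non-compactness of the symmetry group: the infimum defining $d(\cdot,\Qbeta)$ does not pass to weak limits, which is why the paper constructs piecewise-constant modulation parameters and keeps them in a compact set via Lemma~\ref{lem:gap_TXQ_Q}.
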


Note that, unlike the weak solutions discussed in the first part \ref{subsection:motivation}, the energy of the weak solutions from Theorem \ref{thm:stability_Q} (resp.\@ Theorem \ref{thm:stability_Qbeta}) is controlled, indeed, this energy is very close to the one of the ground state $Q$ (resp.\@ $\sqrt{1-\beta}Q_\beta$). Furthermore, these two theorems would imply the orbital stability of $Q$ and $Q_\beta$ in the radial case in both situations if we had a uniqueness result for the solutions.

The assumption required on a general initial condition for the Schrödinger equation \eqref{eq:intial_cond_Qbeta} is owing to the estimates on the component of the initial condition on the orthogonal of $V_0^+$. In the assumptions of Theorem \ref{thm:stability_Q}, this component is naturally zero, leading to a weaker assumption \eqref{eq:initial_cond_Q} on the initial condition.

The key point in both proofs is the following local stability estimate for $Q$, which comes from the invertibility of the linearized operator around $Q$ for the limiting equation~\eqref{eq:Hplus} on a subspace of $V_0^+$ of finite co-dimension.

\begin{mydef}
For $u\in\dot{H}^1(\heis)\cap V_0^+$, we define
\begin{align*}
\delta(u)
	&:=\Big|\|u\|_{\dot{H}^1(\heis)}^2-\|Q\|_{\dot{H}^1(\heis)}^2\Big|+\Big|\|u\|_{L^4(\heis)}^4-\|Q\|_{L^4(\heis)}^4\Big|\\
	&=|\mathcal{P}(u)-\mathcal{P}(Q)|+|\mathcal{E}(u)-\mathcal{E}(Q)|.
\end{align*}
\end{mydef}

\begin{prop}\cite{schroheis}\label{prop:estimates_stability}
There exist $\delta_0>0$ and $C>0$ such that for all $u\in\dot{H}^1(\heis)\cap V_0^+$, if $\delta(u)\leq \delta_0$, then
\[
d(u,\M)^2
	\leq C\delta(u).
\]
\end{prop}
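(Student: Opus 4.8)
The plan is to prove Proposition~\ref{prop:estimates_stability} by a coercivity argument for the linearized operator of the limiting equation~\eqref{eq:Hplus} around the ground state $Q$, combined with a modulation/orthogonal decomposition adapted to the three-parameter symmetry group $X=(s_0,\theta,\alpha)$ generating the orbit $\M$. First I would observe that $\delta(u)$ controls the difference between the conserved quantities $\mathcal{P}(u)=\|u\|_{\dot H^1}^2$ and $\mathcal{E}(u)=\|u\|_{L^4}^4$ and their values at $Q$; since $Q$ is a ground state, it is a critical point of a suitable action functional (a Lagrange-type combination of $\mathcal{P}$ and $\mathcal{E}$ enforcing the Euler--Lagrange equation~\eqref{eq:Hplus}), so the second variation of this action at $Q$ is the natural linearized operator whose spectral properties drive the estimate.

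The key step is to show that near the orbit $\M$ one can write, for $u\in\dot H^1(\heis)\cap V_0^+$ with $\delta(u)$ small, a decomposition $T_{X}u=Q+\varepsilon$ where $X=X(u)$ is chosen by modulation so that $\varepsilon$ is orthogonal (in $\dot H^1$) to the three-dimensional tangent space to $\M$ at $Q$, spanned by the infinitesimal generators $\partial_{s_0}(T_XQ)$, $\partial_\theta(T_XQ)=iQ$, and $\partial_\alpha(T_XQ)$ evaluated at the identity. The implicit function theorem guarantees such an $X$ exists and is unique for $u$ close enough to $\M$. On the orthogonal complement of these directions inside $V_0^+$, the linearized operator is coercive: this is exactly the invertibility on a finite-codimension subspace of $V_0^+$ alluded to in the text. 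Hence a Taylor expansion of the action functional at $Q$ in the direction $\varepsilon$ gives a lower bound $\gtrsim \|\varepsilon\|_{\dot H^1}^2$ for the action defect, while the definition of $\delta(u)$ gives an upper bound $\lesssim \delta(u)$ for that same defect. Combining these yields $\|\varepsilon\|_{\dot H^1}^2\lesssim\delta(u)$, and since $\|\varepsilon\|_{\dot H^1}=\|T_Xu-Q\|_{\dot H^1}\geq d(u,\M)$ by definition of the infimum, this is precisely the claimed bound $d(u,\M)^2\leq C\delta(u)$.

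The main obstacle will be the spectral analysis of the linearized operator: establishing that, after quotienting out the three symmetry directions, the second variation of the action is positive-definite (coercive) on the relevant subspace of $V_0^+$, with no residual negative or zero eigenvalues. This requires identifying the kernel of the linearized operator precisely---it should consist exactly of the symmetry-generated directions (a nondegeneracy statement for the ground state), with at most one negative direction controlled by the constraint coming from fixing $\mathcal{P}$ (the mass/momentum-type constraint). One must rule out spurious negative or null eigenvalues, typically by exploiting the explicit form of $Q$ and the structure of $\Pi_0^+$ and $V_0^+$ to diagonalize or otherwise analyze the operator; the explicit rational expression $Q(x,y,s)=\tfrac{i\sqrt2}{s+i(x^2+y^2)+i}$ suggests that this can be carried out by transferring the problem, via the orthogonal projection onto $V_0^+$, to a model space (for instance a weighted Bergman or Hardy-type space) where the linearized operator becomes explicit. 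Since the proposition is attributed to \cite{schroheis}, I would expect the detailed spectral computation to be imported from there, and the role here to be the assembly of the coercivity estimate from that spectral input together with the modulation argument and the two-sided comparison with $\delta(u)$.
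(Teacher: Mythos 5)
Two remarks. First, on what you can be compared against: this paper never proves Proposition \ref{prop:estimates_stability} at all --- it is imported as a black box from \cite{schroheis}, and the only indication of its proof given here is the sentence that it ``comes from the invertibility of the linearized operator around $Q$ for the limiting equation on a subspace of $V_0^+$ of finite co-dimension.'' Your modulation-plus-coercivity scheme is consistent with that indication, and it is indeed the standard machinery for the quantitative bound; you also correctly anticipate that the spectral input would be imported rather than rederived.

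However, there is a genuine gap in the argument as you have structured it: the hypothesis of the proposition is only that $\delta(u)\leq\delta_0$, i.e.\ that the two conserved quantities of $u$ are close to those of $Q$, and this does not by itself place $u$ anywhere near the orbit $\M$. Your modulation step --- invoking the implicit function theorem to write $T_Xu=Q+\varepsilon$ with $\varepsilon$ orthogonal to the tangent space of $\M$ --- explicitly assumes ``$u$ close enough to $\M$'', and the Taylor expansion of the action needs $\|\varepsilon\|_{\dot{H}^1}$ small in order to absorb the cubic and quartic remainders into the coercive quadratic term; neither is available a priori. Smallness of $\delta(u)$ only yields an order-one bound on $d(u,\M)$ (through the bound on $\|u\|_{\dot{H}^1(\heis)}$), not smallness. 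The missing ingredient is the qualitative step: $Q$ is, up to the symmetries $T_X$, the unique optimizer of the embedding inequality $\|u\|_{L^4(\heis)}^4\leq \frac{1}{I_+}\,\mathcal{P}(u)^2$ on $\dot{H}^1(\heis)\cap V_0^+$ (with $\mathcal{E}(Q)=\mathcal{P}(Q)=I_+$), so that $\delta(u)\leq\delta_0$ makes $u$ an almost-optimizer, and a concentration-compactness / profile decomposition argument is what forces almost-optimizers to lie close to $\M$ modulo symmetries. Only after that step does the local coercivity argument you describe apply and upgrade qualitative proximity to the quantitative estimate $d(u,\M)^2\leq C\delta(u)$. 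As written, your proof establishes a different proposition, namely one whose hypothesis is ``$d(u,\M)$ sufficiently small'' rather than ``$\delta(u)$ sufficiently small''.
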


In order to prove Theorem \ref{thm:stability_Q}, we construct the weak solution for the limiting initial value problem~\eqref{eq:bergman_evolution} as a limit of smooth functions. The approximating functions solve slightly modified equations where we have cut frequencies, so that the Cauchy problem is globally well-posed. We show that we can control their distance to the orbit of the ground state $Q$ using Proposition~\ref{prop:estimates_stability}. Finally, we build modulation parameters which stay bounded on finite time intervals for the approximate solutions, and through a compactness argument, we control the distance of the weak solution to the orbit of $Q$ when passing to the limit.

For Theorem \ref{thm:stability_Qbeta}, the idea for the construction is the same, however we only have at our disposal the information on the limiting equation from Proposition~\ref{prop:estimates_stability}. Therefore, we need to take advantage of the fact that $Q_\beta$ is close to $Q$ when $\beta$ is close to~$1$. In this spirit, in order to tackle Theorem \ref{thm:stability_Qbeta} for the speed $\beta$, we first introduce Cauchy problems for the Schrödinger equation \eqref{eq:H} with a parameter $\gamma$ increasing from $\beta$ to $1$. We display some continuity between the Cauchy problems, therefore it is possible to show their convergence to a Cauchy problem for the limiting equation as $\gamma$ tends to $1$. In the proof, we combine this strategy with the above method : we approximate by smooth functions the weak solutions to the Cauchy problems with parameter $\gamma$ by cutting frequencies. Finally, we are able to get back to the problem with speed $\beta$ by continuity and conclude in the same way as the proof of Theorem~\ref{thm:stability_Q}, by constructing bounded modulation parameters for the approximate solutions.

\subsection{Comparison with other equations}
Concerning the focusing energy-critical Schrödinger equation on the Euclidean plane $\R^N$
\[
i\partial_t u-\Delta u=|u|^{p_c-1}u,
\]
where $N\geq 3$ and $p_c=\frac{N+2}{N-2}$, there exists an explicit stationary solution
\[
W(x)=\frac{1}{(1+\frac{|x|^2}{N(N-2)})^{\frac{N-2}{2}}}.
\]
The orbit $\{x\mapsto CW(\frac{x+x_0}{\lambda}) \mid (C,x_0,\lambda)\in\R\times\R\times\R_+^*\}$ of $W$ is the set of minimizers for the Sobolev embedding $\dot{H}^1(\R^N)\hookrightarrow L^{2^*}(\R^N)$ (see the work of Talenti\cite{Talenti1976} and Aubin \cite{Aubin1976}). The energy $E(W)=\half \|W\|_{\dot{H}^1(\R^N)}-\frac{1}{p_c+1}\|W\|_{L^{p_c+1}(\R^N)}$ and the $\dot{H}^1$ norm $\|W\|_{\dot{H}^1(\R^N)}$ play an important role in the dynamical behaviour of the solutions. Kenig and Merle \cite{KenigMerle2006} proved in the radial case that if $N\in\{3,4,5\}$ and the initial condition $u_0\in\dot{H}^1(\R^N)$ satisfies $E(u_0)<E(W)$ and $\|u_0\|_{\dot{H}^1(\R^N)}<\|W\|_{\dot{H}^1(\R^N)}$, then the solution is global and scatters in $\dot{H}^1(\R^N)$, whereas if $E(u_0)<E(W)$ and $\|u_0\|_{\dot{H}^1(\R^N)}>\|W\|_{\dot{H}^1(\R^N)}$, then the solution must blow up in finite time.

The situation is different for the Schrödinger equation on the Heisenberg group. Indeed, from the equation satisfied $Q_\beta$, one can see that the traveling waves
\[
u_\beta(t,x,y,s)=\sqrt{1-\beta}Q_\beta(x,y,s+\beta t)
\]
have a vanishing energy as $\beta$ tends to $1$ :
\[
E(u_\beta(t))=\half\|u_\beta(t)\|_{\dot{H}^1(\heis)}^2-\frac{1}{4}\|u_\beta(t)\|_{L^4(\heis)}^4
	\sim (1-\beta)\frac{\pi^2}{2}
	\to 0,
\]
therefore there exists solutions that do not scatter with arbitrary small energy.

A better parallel would be the mass-critical focusing half-wave equation on the real line
\begin{equation}\label{eq:halfwave}
i\partial_t u+|D|u=|u|^2u, \quad (t,x)\in\R\times \R,
\end{equation}
where $D=-i\partial_x$, $\widehat{|D|f}(\xi)=|\xi|\widehat{f}(\xi)$. The half-wave equation in one dimension also presents some lack of dispersion, and admits traveling waves with speed $\beta\in(-1,1)$ (see Krieger, Lenzmann and Raphaël \cite{KriegerLenzmannRaphael2013})
\[
u(t,x)=Q_\beta\Big(\frac{x+\beta t}{1-\beta}\Big)\e^{-it},
\]
where the profile $Q_\beta$ is a solution to
\[
\frac{|D|-\beta D}{1-\beta}Q_\beta+Q_\beta=|Q_\beta|^2Q_\beta.
\]
The profiles $Q_\beta$ in the half-wave equation converge \cite{GerardLenzmannPocovnicuRaphael2018} as $\beta$ tends to $1$ in $H^\half(\R)$ to a ground state solution $Q$ to some limiting equation
\[
DQ+Q=\Pi(|Q|^2Q), \quad D=-i\partial_x.
\]
From $Q$, we recover a traveling wave solution to the cubic Szeg\H{o} equation
\begin{equation}\label{eq:szego}
i\partial_t u=\Pi(|u|^2u)
\end{equation}
by setting
\(
u(t,x)=Q(x-t)\e^{-it}.
\)
Moreover, the linearized operator around $Q$ is coercive \cite{Pocovnicu2012}, and in particular, the Szeg\H{o} profile is orbitally stable in the relevant space for $Q$
\[
H^\half_+(\R)=\{u\in H^\half(\R)\mid \mathrm{Supp}(\widehat{u})\subset \R_+\}.
\]
\begin{thm}[Orbital stability of $Q$ for the Szeg\H{o} equation]
There exist $\varepsilon_0>0$ and $C>0$ such that for all solution $u$ of the Szeg\H{o} equation \eqref{eq:szego} with initial condition $u_0\in H^\half_+(\R)$, if
\[
\|u_0-Q\|_{H^{\half}(\R)}\leq\varepsilon_0,
\]
then
\[
\sup_{t\in\R}\inf_{(\gamma,y)\in\T\times\R}\|e^{-i\gamma}u(t,\cdot-y)-Q\|_{H^{\half}(\R)}^2
	\leq C\|u_0-Q\|_{H^{\half}(\R)}.
\]
\end{thm}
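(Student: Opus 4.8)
The plan is to run the classical Grillakis--Shatah--Strauss / Lyapunov argument, using the conservation laws of the cubic Szeg\H{o} equation \eqref{eq:szego}, the variational characterization of $Q$, and the coercivity of the linearized operator as the single nontrivial input. On the Hardy space $H^{\half}_+(\R)$ the flow of \eqref{eq:szego} preserves the mass $N(u)=\|u\|_{L^2}^2$, the momentum $P(u)=\langle Du,u\rangle=\|u\|_{\dot H^{\half}}^2$, and the Hamiltonian $E(u)=\frac14\|u\|_{L^4}^4$; the conservation of $N$ and $P$ reflects the invariance of \eqref{eq:szego} under the phase rotation $u\mapsto\e^{-i\gamma}u$ and the translation $u\mapsto u(\cdot-y)$, whose orbit through $Q$ is exactly the set $\{\e^{-i\gamma}Q(\cdot-y)\mid(\gamma,y)\in\T\times\R\}$ appearing in the infimum. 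The profile equation $DQ+Q=\Pi(|Q|^2Q)$ states precisely that $Q$ is a critical point of the conserved Lyapunov functional
\[
\mathcal{L}(u):=E(u)-\tfrac12 P(u)-\tfrac12 N(u),
\qquad
\mathcal{L}'(u)=\Pi(|u|^2u)-Du-u,
\]
so that $\mathcal{L}'(Q)=0$.

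The key input, which I would take from \cite{Pocovnicu2012}, is the coercivity of the self-adjoint operator $\mathcal{L}''(Q)$ acting on the real Hilbert space $H^{\half}_+(\R)$. Differentiating the profile equation in $\gamma$ and in $y$ shows that its kernel contains the two infinitesimal symmetry directions $iQ$ and $Q'=\partial_x Q$; the coercivity statement is that, transversally to these, $\mathcal{L}''(Q)$ is definite, i.e.\@ (after fixing the overall sign so that $Q$ is a local minimum modulo symmetries) there exists $c>0$ with
\[
\langle\mathcal{L}''(Q)h,h\rangle\ \geq\ c\,\|h\|_{H^{\half}}^2
\qquad\text{for all } h\perp\{iQ,Q'\}
\]
in the real $H^{\half}$ inner product. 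I would treat this spectral fact as given.

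Next I would set up the modulation. For $u$ with $d(u):=\inf_{(\gamma,y)}\|\e^{-i\gamma}u(\cdot-y)-Q\|_{H^{\half}}$ small, the infimum is attained at some $(\gamma_*,y_*)$, and writing $v:=\e^{-i\gamma_*}u(\cdot-y_*)-Q$ one has $\|v\|_{H^{\half}}=d(u)$. Vanishing of the $\gamma$- and $y$-derivatives of $\|\e^{-i\gamma}u(\cdot-y)-Q\|_{H^{\half}}^2$, together with the identities $\Re\langle iv,v\rangle=\Re\langle v',v\rangle=0$, yields the \emph{exact} orthogonality relations $\Re\langle iQ,v\rangle_{H^{\half}}=\Re\langle Q',v\rangle_{H^{\half}}=0$, which are exactly what triggers the coercivity. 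Since $\mathcal{L}$ is invariant under phase and translation and $\mathcal{L}'(Q)=0$, Taylor expansion gives
\[
\mathcal{L}(u)-\mathcal{L}(Q)=\mathcal{L}(Q+v)-\mathcal{L}(Q)=\tfrac12\langle\mathcal{L}''(Q)v,v\rangle+O(\|v\|_{H^{\half}}^3),
\]
and the coercivity, after absorbing the cubic remainder for $d(u)$ small, produces the lower bound $\mathcal{L}(u)-\mathcal{L}(Q)\geq \tfrac{c}{4}\,d(u)^2$. On the other side, $\mathcal{L}$ is locally Lipschitz near $Q$ on $H^{\half}_+(\R)$, whence $|\mathcal{L}(u_0)-\mathcal{L}(Q)|\leq C\|u_0-Q\|_{H^{\half}}$.

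Combining these two estimates through the conservation $\mathcal{L}(u(t))=\mathcal{L}(u_0)$ gives, as long as $d(u(t))$ stays below the threshold where the modulation and coercivity apply, $\tfrac{c}{4}\,d(u(t))^2\leq\mathcal{L}(u_0)-\mathcal{L}(Q)\leq C\|u_0-Q\|_{H^{\half}}$, which is exactly the claimed bound; the asymmetry between the square on the left and the first power on the right comes from using the coercive quadratic lower bound on one side but only the Lipschitz (linear) control of $\mathcal{L}$ on the other. A standard continuity/bootstrap argument, choosing $\varepsilon_0$ so small that $C\varepsilon_0$ remains under the modulation threshold, shows that $d(u(t))$ cannot escape this regime and closes the estimate uniformly in $t$. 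I expect the main obstacle to be the coercivity step of \cite{Pocovnicu2012}: proving positivity of $\mathcal{L}''(Q)$ transversally to $\{iQ,Q'\}$ rests on the explicit rational form of $Q$ and a delicate spectral analysis of the Hankel/Toeplitz-type operators attached to the Szeg\H{o} projector $\Pi$, and one must moreover verify that the orthogonality produced by the modulation is taken in precisely the inner product for which coercivity holds, so that the bootstrap genuinely runs.
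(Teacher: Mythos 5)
The paper does not actually prove this theorem: it is quoted in Section~1.3 as a known result for the cubic Szeg\H{o} equation, with the coercivity of the linearized operator and the stability attributed to \cite{Pocovnicu2012}. So the only comparison possible is with the route the paper implicitly points to, which is indeed the Lyapunov/GSS skeleton you chose (modulation, Taylor expansion, conservation, bootstrap). The problem is that your single ``key input'' is false as stated, for either choice of overall sign. With your sign, $\langle\mathcal{L}''(Q)h,h\rangle=\Re\int\left(2|Q|^2|h|^2+Q^2\bar h^2\right)-\|h\|_{\dot H^{\half}}^2-\|h\|_{L^2}^2$ is negative for high-frequency $h$, so $Q$ cannot be a constrained minimum; flipping the sign, test the direction $h=Q$: pairing the profile equation $DQ+Q=\Pi(|Q|^2Q)$ with $Q$ gives $\|Q\|_{\dot H^{\half}}^2+\|Q\|_{L^2}^2=\|Q\|_{L^4}^4$, hence
\[
-\langle\mathcal{L}''(Q)Q,Q\rangle=\|Q\|_{\dot H^{\half}}^2+\|Q\|_{L^2}^2-3\|Q\|_{L^4}^4=-2\|Q\|_{L^4}^4<0,
\]
while $Q$ is orthogonal to \emph{both} symmetry directions $iQ$ and $Q'$ in the real $H^{\half}$ (and $L^2$) inner product, since $\Re\langle iQ,Q\rangle=0$ and $\Re\langle Q',Q\rangle=\tfrac12\frac{\d}{\d y}\|Q(\cdot-y)\|^2\big|_{y=0}=0$. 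So the Hessian of your Lyapunov functional is indefinite on $\{iQ,Q'\}^{\perp}$ no matter how you fix the sign, and the coercivity you ``take as given'' is not what \cite{Pocovnicu2012} can provide.

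This is not the technical point you flagged (matching inner products); it is a structural gap. The coercivity that is actually true holds only after imposing \emph{additional} orthogonality conditions against the gradients of the conserved quantities (essentially $Q$ and $DQ$, i.e.\@ four real conditions in total), and the components of $v$ along these extra directions cannot be killed by modulation — they must be controlled by the conservation of $N$ and $P$ \emph{separately}, not just of the combined functional $\mathcal{L}$: from $N(u(t))=N(u_0)$ one gets $|2\Re\langle Q,v\rangle_{L^2}|\leq|N(u_0)-N(Q)|+\|v\|_{L^2}^2\lesssim\varepsilon+\|v\|^2$, and similarly for $P$; feeding these into the expansion is what yields $d(u(t))^2\lesssim\varepsilon$. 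In particular, the square-versus-first-power asymmetry in the conclusion comes from these $O(\varepsilon)$ constraint deficits, not (as you argue) merely from the Lipschitz bound on $\mathcal{L}$. Note that this corrected structure — distance squared controlled by the sum of the deficits of \emph{both} conserved quantities — is exactly the form of the stability estimate the present paper relies on for its own Heisenberg analogue, namely Proposition \ref{prop:estimates_stability}: $d(u,\M)^2\leq C\delta(u)$ with $\delta(u)=|\mathcal{P}(u)-\mathcal{P}(Q)|+|\mathcal{E}(u)-\mathcal{E}(Q)|$. Your write-up, as it stands, would need that missing decomposition step and the separate use of the two conservation laws before the bootstrap can run.
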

Gérard, Lenzmann, Pocovnicu and Raphaël \cite{GerardLenzmannPocovnicuRaphael2018} deduced the invertibility of the linearized operator for the half-wave equation around the profiles $Q_\beta$ when $\beta$ is close enough to $1$, and their estimates imply the orbital stability of theses profiles.
\begin{thm}[Orbital stability of $Q_\beta$ for the half-wave equation]
There exists $\beta_*\in(0,1)$ such that the following holds. Let $\beta\in(\beta_*,1)$. Then there exist $\varepsilon_0(\beta)>0$ and $C(\beta)>0$ such that for all solution $u$ of the half-wave equation \eqref{eq:halfwave} with initial condition $u_0\in H^\half(\R)$, if
\[
\|u_0-Q_\beta(\frac{\cdot}{1-\beta})\|_{H^{\half}(\R)}\leq\varepsilon_0(\beta),
\]
then
\[
\sup_{t\in\R}\inf_{(\gamma,y)\in\T\times\R}\|e^{-i\gamma}u(t,\cdot-y)-Q_\beta(\frac{\cdot}{1-\beta})\|_{H^{\half}(\R)}^2
	\leq C\|u_0-Q_\beta(\frac{\cdot}{1-\beta})\|_{H^{\half}(\R)}.
\]
\end{thm}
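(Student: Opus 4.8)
The plan is to run the classical Lyapunov / Grillakis--Shatah--Strauss orbital stability scheme, with the coercivity of the linearized operator as the single nontrivial analytic input. Write $R_\beta := Q_\beta(\frac{\cdot}{1-\beta})$, so that $u(t,x)=R_\beta(x+\beta t)\e^{-it}$ solves \eqref{eq:halfwave}; a direct computation turns the equation for $Q_\beta$ into $(|D|-\beta D)R_\beta+R_\beta=|R_\beta|^2R_\beta$. The half-wave equation conserves the mass $M(u)=\half\|u\|_{L^2}^2$, the momentum $P(u)=\half\langle Du,u\rangle$ and the energy $E(u)=\half\langle|D|u,u\rangle-\frac14\|u\|_{L^4}^4$, and the two one-parameter groups $u\mapsto\e^{i\gamma}u$ and $u\mapsto u(\cdot-y)$ are precisely the symmetries generated (via Noether) by $M$ and $P$. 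The starting observation is that the profile equation reads $\nabla E(R_\beta)+\nabla M(R_\beta)-\beta\,\nabla P(R_\beta)=0$, i.e. $R_\beta$ is a critical point of the conserved functional $\Lambda_\beta:=E+M-\beta P$.

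First I would set up the modulation. Put $d(u):=\inf_{(\gamma,y)\in\T\times\R}\|\e^{-i\gamma}u(\cdot-y)-R_\beta\|_{H^{\half}(\R)}$. For $u$ in a small $H^{\half}$-tube around the orbit $\{\e^{i\gamma}R_\beta(\cdot-y)\}$, the implicit function theorem produces unique parameters $(\gamma(t),y(t))$ such that $v(t):=\e^{-i\gamma(t)}u(t,\cdot+y(t))-R_\beta$ satisfies the orthogonality conditions $v(t)\perp iR_\beta$ and $v(t)\perp\partial_x R_\beta$ in the real $L^2$ pairing; these are exactly the two kernel directions of the Hessian $\mathcal{L}_\beta:=\Lambda_\beta''(R_\beta)$, given explicitly by $\mathcal{L}_\beta v=(|D|-\beta D+1)v-2|R_\beta|^2v-R_\beta^2\bar v$. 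Local well-posedness in $H^{\half}$ together with the a priori bound below keeps $u(t)$ inside the tube and upgrades the solution to a global one, so it suffices to bound $\|v(t)\|_{H^{\half}}$.

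The heart of the matter is the coercivity of $\mathcal{L}_\beta$, which is the invertibility statement attributed to \cite{GerardLenzmannPocovnicuRaphael2018}: there exist $c>0$ and $\beta_*<1$ such that for $\beta\in(\beta_*,1)$,
\[
\langle\mathcal{L}_\beta w,w\rangle\geq c\|w\|_{H^{\half}(\R)}^2
\]
for every $w\perp iR_\beta,\partial_x R_\beta$ subject to the finitely many constraints coming from the conserved quantities (a Vakhitov--Kolokolov / slope condition neutralizing the single negative direction of $\mathcal{L}_\beta$). This spectral analysis is the main obstacle: one must show that $\mathcal{L}_\beta$ has exactly one negative eigenvalue and a two-dimensional kernel, with a spectral gap that does not degenerate as $\beta\to1$. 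The natural route is perturbation from the limiting profile: after the rescaling sending $R_\beta$ to $Q_\beta\to Q$ in $H^{\half}$, the operator $\mathcal{L}_\beta$ converges to the linearized Szeg\H{o} operator around $Q$, whose coercivity on $H^{\half}_+(\R)$ is known from \cite{Pocovnicu2012}. The delicate points are the uniformity of the gap in $\beta$ and the control of the non-holomorphic term $R_\beta^2\bar v$ in the limit $\beta\to1$.

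Finally I would close the estimate. Since $R_\beta$ is a critical point, a Taylor expansion gives $\Lambda_\beta(u_0)-\Lambda_\beta(R_\beta)=O(\|u_0-R_\beta\|_{H^{\half}}^2)$, and conservation of $\Lambda_\beta$ transports this to all times. Expanding $\Lambda_\beta(R_\beta+v(t))$ to second order isolates the quadratic form, and coercivity controls the part of $v(t)$ transverse to the constrained directions. The remaining finite-dimensional directions are handled by conservation of $M$ and $P$; but since the phase and translation invariances leave $M$ and $P$ unchanged, the modulation cannot remove the mismatches $|M(u_0)-M(R_\beta)|$ and $|P(u_0)-P(R_\beta)|$, and because $\nabla M(R_\beta)=R_\beta$ and $\nabla P(R_\beta)=DR_\beta$ are nonzero these mismatches are only $O(\|u_0-R_\beta\|_{H^{\half}})$ at first order. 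This unavoidable first-order loss is exactly what produces the square root on the right-hand side, yielding $\sup_t d(u(t))^2\lesssim\|u_0-R_\beta\|_{H^{\half}}$ and hence the theorem.
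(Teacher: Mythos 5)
This theorem is background quoted from Gérard--Lenzmann--Pocovnicu--Raphaël \cite{GerardLenzmannPocovnicuRaphael2018}; the paper gives no proof of it beyond noting that the invertibility/coercivity of the linearized operator around $Q_\beta$ for $\beta$ close to $1$, obtained in that reference by perturbation from Pocovnicu's coercivity for the Szeg\H{o} profile \cite{Pocovnicu2012}, implies the orbital stability. Your sketch is exactly that route --- the standard Lyapunov/modulation scheme built on the cited uniform-in-$\beta$ coercivity of $\mathcal{L}_\beta$, with the first-order mass/momentum mismatch correctly explaining why the conclusion is $d^2\lesssim\|u_0-Q_\beta(\tfrac{\cdot}{1-\beta})\|_{H^{1/2}}$ rather than a linear bound --- so it matches the approach the paper relies on.
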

In higher dimensions $d\geq 2$, traveling waves for the half-wave equation on $\R^d$
\begin{equation*}
i\partial_t u+\sqrt{-\Delta}u=|u|^{p-1}u, \quad (t,x)\in\R\times \R^d,
\end{equation*}
are also orbitally stable in the radial case for mass-subcritical non-linearities $1<p<1+\frac{2}{n}$, but orbitally unstable in the mass-supercritical regime $1+\frac{2}{n}<p<1+\frac{2}{n-1}$ \cite{BellazziniGeorgievVisciglia2018}. Moreover, in the energy-critical and subcritical case, Bellazzini, Georgiev, Lenzmann and Visciglia \cite{BellazziniGeorgievLenzmannVisciglia2019} proved that there can be no small data scattering in the energy space because of the existence of traveling waves with arbitrary small energy.

As we will see in this paper, one cannot directly adapt the proofs for the half-wave equation because we lack information on the Cauchy problem. A second complication arising in comparison to the half-wave equation is the fact that only two conservation laws are available (energy and momentum), because the masses of the ground states may be infinite (this fact is easy to check for $Q$ for instance). The method both for the Schrödinger equation on the Heisenberg group and for its limiting system is the construction of some weak solutions as a limit of smooth functions, and show that we can pass to the limit on their stability properties.

The paper is organized as follows. We first prove the orbital stability of $Q$ for the limiting equation in Section \ref{section:stability_Q}. Then, we assess how close the solutions are to the limiting equation as $\beta$ tends to $1$, in order to study the orbital stability of $Q_\beta$ for the Schrödinger equation in Section \ref{section:stability_Qbeta}.

\paragraph{Acknowledgements} The author is grateful to her PhD advisor P. Gérard for his generous advice and encouragement.
\section{Notation}\label{notation}

\subsection{The Heisenberg group}

Let us now recall some facts about the Heisenberg group. We use coordinates and identify the Heisenberg group $\heis$ with $\R^3$. The group multiplication is given by
\[
(x,y,s)\cdot(x',y',s')=(x+x',y+y',s+s'+2(x'y-xy')).
\]
The Lie algebra of left-invariant vector fields on $\heis$ is spanned by the vector fields $X=\partial_x+2y\partial_s$, $Y=\partial_y-2x\partial_s$ and $T=\partial_s=\frac{1}{4}[Y,X]$. The sub-Laplacian is defined as
\begin{align*}
\mathcal{L}_0 
	:=\frac{1}{4}(X^2+Y^2)
	=\frac{1}{4}(\partial_x^2+\partial_y^2)+(x^2+y^2)\partial_s^2+(y\partial_x -x\partial_y)\partial_s.
\end{align*}
When $u$ is a radial function, the sub-Laplacian coincides with the operator
\[
\hlapl :=\frac{1}{4}(\partial_x^2+\partial_y^2)+(x^2+y^2)\partial_s^2.
\]

The space $\heis$ is endowed with a smooth left invariant measure, the Haar measure, which in the coordinate system $(x,y,s)$ is the Lebesgue measure $\d\lambda_3(x,y,s)$. Sobolev spaces of positive order can then be constructed on $\heis$ from powers of the operator $-\hlapl$, for example, $\dot{H}^1(\heis)$ is the completion of the Schwarz space $\scw(\heis)$ for the norm
\[
\|u\|_{\dot{H}^1(\heis)}:=\|(-\hlapl)^{\frac{1}{2}}u\|_{L^2(\heis)}.
\]

\subsection{Decomposition along the Hermite functions}

In order to study radial functions valued on the Heisenberg group $\heis$, it is convenient to use their decomposition along Hermite-type functions (see for example \cite{stein_harmonic}, Chapters 12 and 13). The Hermite functions
\[
h_m(x)=\frac{1}{\pi^{\frac{1}{4}}2^{\frac{m}{2}}(m!)^{\half}}(-1)^m\e^{\frac{x^2}{2}}\partial_x^m(\e^{-x^2}), \quad x\in\R, m\in\N,
\]
form an orthonormal basis of $L^2(\R)$. In $L^2(\R^2)$, the family of products of two Hermite functions $(h_m(x)h_p(y))_{m,p\in\N}$ diagonalizes the two-dimensional harmonic oscillator~: for all $m,p\in\N$,
\[
(-\Delta_{x,y}+x^2+y^2)h_m(x)h_p(y)=2(m+p+1)h_m(x)h_p(y).
\]

Given $u\in \scw(\heis)$, we will denote by $\widehat{u}$ its usual Fourier transform under the $s$ variable, with corresponding variable $\sigma$
\[
\widehat{u}(x,y,\sigma)=\frac{1}{\sqrt{2\pi}}\int_{\R}\e^{-is\sigma}u(x,y,s)\d s.
\]
For $m,p\in\N$, set $\widehat{h_{m,p}}(x,y,\sigma):=h_m(\sqrt{2|\sigma|}x)h_p(\sqrt{2|\sigma|}y)$. Then the family $(h_{m,p})_{m,p\in\N}$ diagonalizes the sub-Laplacian in the following sense :
\[
\widehat{\hlapl h_{m,p}}=-(m+p+1)|\sigma|\widehat{h_{m,p}}.
\]
Let $k\in\{-1,0,1\}$, and denote by $\dot{H}^k(\heis)\cap V_n^{\pm}$ the subspace of $\dot{H}^k(\heis)$ spanned by $\{h_{m,p}\mid m,p\in\N, m+p=n\}$. Some $u_n^{\pm}\in \dot{H}^k(\heis)$ belongs to $\dot{H}^k(\heis)\cap V_n^{\pm}$ if there exists a family $(f_{m,p}^\pm)_{m+p=n}$ such that
\[
\widehat{u_n^\pm}(x,y,\sigma)=\sum_{\substack{m,p\in\N;\\ m+p=n}}f_{m,p}^{\pm}(\sigma)\widehat{h_{m,p}}(x,y,\sigma)\un_{\sigma\gtrless 0}.
\]
For $u_n^\pm\in\dot{H}^k(\heis)\cap V_n^{\pm}$, the $\dot{H}^k$-norm of $u_n^\pm$ writes 
\begin{align*}
\|u_n^{\pm}\|_{\dot{H}^k(\heis)}^2
	&=\int_{\R_{\pm}}((n+1)|\sigma|)^k\int_{\R^2}|\widehat{u_n^\pm}(x,y,\sigma)|^2\d x\d y\d\sigma\\
	&=\sum_{\substack{m,p\in\N;\\ m+p=n}}\int_{\R_{\pm}}((n+1)|\sigma|)^k|f_{m,p}^{\pm}(\sigma)|^2\frac{\d\sigma}{2|\sigma|}.
\end{align*}
Any function $u\in\dot{H}^k(\heis)$ admits a decomposition along the orthogonal sum of the subspaces $\dot{H}^k(\heis)\cap V_n^{\pm}$. Let us write $u=\sum_{\pm}\sum_{n\in\N}u_n^{\pm}$ where $u_n^{\pm}\in \dot{H}^k(\heis)\cap V_n^{\pm}$ for all $(n,\pm)$. Then 
\begin{align*}
\|u\|_{\dot{H}^k(\heis)}^2
	&=\sum_{\pm}\sum_{n\in\N}\|u_n^{\pm}\|_{\dot{H}^k(\heis)}^2.
\end{align*}

For $k=0$, we get an orthogonal decomposition of the space $L^2(\heis)$, and denote by $\Pi_n^{\pm}$ the associated orthogonal projectors.

The particular space $\dot{H}^k(\heis)\cap V_0^+$ is spanned by a unique radial function $h_0^+$, satisfying
\[
\widehat{h_0^+}(x,y,\sigma)=\frac{1}{\sqrt{\pi}}\e^{-(x^2+y^2)\sigma}\un_{\sigma\geq 0}.
\]
Set $u\in \dot{H}^k(\heis)\cap V_0^+$, then there exists $f$ such that
\[
\widehat{u}(x,y,s)=f(\sigma)\widehat{h_0^+}(x,y,\sigma),
\]
and
\[
\|u\|_{\dot{H}^k(\heis)}^2=\int_{\R_+}|f(\sigma)|^2\frac{\d\sigma}{2\sigma^{1-k}}.
\]

\section{Conditional orbital stability of the ground state \texorpdfstring{$Q$}{Q} in the limiting equation}\label{section:stability_Q}

In this section, we prove Theorem \ref{thm:stability_Q} on the conditional orbital stability of the ground state $Q$ in the limiting equation \eqref{eq:bergman_evolution}
\begin{equation*}
\begin{cases}
i\partial_t u=\Pi_0^+(|u|^2u)
\\
u(t=0)=u_0\in\dot{H}^1(\heis)\cap V_0^+
\end{cases}, \quad (t,x,y,s)\in\R\times\heis.
\end{equation*}
For convenience, we replace in this part the elements $u\in\dot{H}^k(\heis)\cap V_0^+$, $k\in\{-1,0,1\}$ with the corresponding holomorphic function on the complex upper half-plane $F_u\in\dot{H}^{\frac{k}{2}}(\C_+)\cap\holo(\C_+)$, defined as
\[
F_u(s+i(x^2+y^2)):=u(x,y,s),
\]
which transforms the Cauchy problem for $u$ in a Cauchy problem for $F_u$ written as
\begin{equation}\label{eq:bergman}
\begin{cases}
i\partial_t u=P_0(|u|^2u)
\\
u(t=0)=u_0\in\dot{H}^\half(\C_+)\cap\holo(\C_+)
\end{cases}, \quad (t,z)\in\R\times \C_+,
\end{equation}
$P_0$ being a Bergman projector. We recall such a correspondence in part \ref{subsection:weighted_bergman}. Then we construct some smooth functions approximating a weak solution of equation \eqref{eq:bergman} in part \ref{subsection:approximate_solutions_q}, prove their weak convergence in part \ref{subsection:limiting_weak_convergence}, and deduce from their distance to the orbit of $Q$ an upper bound on the distance of the weak limit to this orbit in part \ref{subsection:limiting_modulation}.

\subsection{Weighted Bergman spaces}\label{subsection:weighted_bergman}

Recall that if $u\in\dot{H}^k(\heis)\cap V_0^+$, then $F_u\in\dot{H}^{\frac{k}{2}}(\C_+)\cap \holo(\C_+)$, and the Fourier transform of $u$ along the $s$ variable corresponds to a function in $L^2(\R_+,\sigma^{k-1}\d\sigma)$ : for some $f\in L^2(\R_+,\sigma^{k-1}\d\sigma)$,
\[
F_u(z)=\frac{1}{\pi\sqrt{2}}\int_0^{+\infty}e^{iz\sigma}f(\sigma)\d\sigma
\]
where
\[
\|u\|_{\dot{H}^k(\heis)}^2=\pi\|F_u\|_{\dot{H}^{\frac{k}{2}}(\C_+)}^2=\pi\|(-i\partial_z)^{\frac{k}{2}}F_u\|_{L^2(\C_+)}^2=\half\int_0^{+\infty}|f(\sigma)|^2\sigma^{k-1}\d\sigma.
\]

For $k<1$, $F_u$ belongs to the weighted Bergman space $A^2_{1-k}$.
\begin{mydef}[Weighted Bergman spaces] Given $k<1$, the weighted Bergman space $A_{1-k}^2$ is the subspace of
$
L^2_{1-k}:=L^2(\C_+,\Im(z)^{-k}\d\lambda(z))
$
composed of holomorphic functions of the complex upper half-plane $\C_+$~:
\[
A_{1-k}^2:=\left\{F\in\holo(\C_+)\mid \|F\|_{L_{1-k}^2}^2:=\int_0^{+\infty}\int_{\R}|F(s+it)|^2\d s\frac{\d t}{t^{k}}<+\infty\right\}.
\]
\end{mydef}

Indeed, recall the Paley-Wiener theorem for Bergman spaces \cite{bergman_projectors}.
\begin{thm}[Paley-Wiener] Let $k<1$. Then for every $f\in L^2(\R_+,\sigma^{k-1}\d\sigma)$, the following integral is absolutely convergent on $\C_+$
\begin{equation}\label{eq:paley1}
F(z)=\frac{1}{\sqrt{2\pi}}\int_0^{+\infty}\e^{iz\sigma}f(\sigma)\d\sigma,
\end{equation}
and defines a function $F\in A^2_{1-k}$ which satisfies
\begin{equation}\label{eq:paley2}
\|F\|^2_{L^2_{1-k}}=\frac{\Gamma(1-k)}{2^{1-k}}\int_{0}^{+\infty}|f(\sigma)|^2\sigma^{k-1}\d\sigma.
\end{equation}
Conversely, for every $F\in A^2_{1-k}$, there exists $f\in L^2(\R_+,\sigma^{k-1}\d\sigma)$ such that \eqref{eq:paley1} and \eqref{eq:paley2} hold.
\end{thm}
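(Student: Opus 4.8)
The plan is to prove the two implications separately: the direct statement (from $f$ to $F$) reduces, after a Plancherel computation in the horizontal variable, to a single Gamma-function integral, while the converse (from $F$ to $f$) requires the genuine Paley--Wiener argument, namely showing that the Fourier transform in $s$ of each horizontal slice of $F$ is supported in $\R_+$ and carries the expected exponential dependence on the height.

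For the direct statement, I would fix $z=s+it$ with $t>0$ and write $\e^{iz\sigma}=\e^{is\sigma}\e^{-t\sigma}$. Splitting $\e^{-t\sigma}|f(\sigma)|=\big(\e^{-t\sigma}\sigma^{(1-k)/2}\big)\big(\sigma^{(k-1)/2}|f(\sigma)|\big)$ and applying Cauchy--Schwarz shows that \eqref{eq:paley1} converges absolutely, since
\[
\int_0^{+\infty}\e^{-2t\sigma}\sigma^{1-k}\d\sigma=\frac{\Gamma(2-k)}{(2t)^{2-k}}<+\infty
\qquad\text{for }k<1.
\]
The same domination lets me differentiate under the integral sign, so $F\in\holo(\C_+)$. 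For the norm, I observe that for fixed $t>0$ the slice $s\mapsto F(s+it)$ is, up to the normalization, the inverse Fourier transform of $\sigma\mapsto \e^{-t\sigma}f(\sigma)\un_{\sigma>0}$, whence Plancherel gives $\int_\R|F(s+it)|^2\d s=\int_0^{+\infty}\e^{-2t\sigma}|f(\sigma)|^2\d\sigma$. Integrating against $t^{-k}\d t$ and exchanging the order of integration by Tonelli, the whole identity collapses to the inner integral
\[
\int_0^{+\infty}\e^{-2t\sigma}\,\frac{\d t}{t^{k}}=\frac{\Gamma(1-k)}{(2\sigma)^{1-k}},
\]
which is precisely the Gamma integral converging for $k<1$ and yields \eqref{eq:paley2}.

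For the converse, given $F\in A^2_{1-k}$ I would first check that each slice $F_t:=F(\cdot+it)$ lies in $L^2(\R)$ for every $t>0$: applying the sub-mean-value inequality to the subharmonic function $|F|^2$ over discs of radius $t/2$ contained in $\C_+$ bounds $\|F_t\|_{L^2(\R)}$ by the Bergman mass of $F$ on a horizontal strip around height $t$. Let $f_t$ denote the Fourier transform in $s$ of $F_t$. The structural heart of the argument is to use the holomorphy of $F$, via Cauchy's theorem shifting horizontal contours, to show $f_t(\sigma)=\e^{-(t-t')\sigma}f_{t'}(\sigma)$ for $0<t'<t$ and that $f_t$ is supported in $\sigma\geq 0$: shifting the contour upward kills the contribution for $\sigma<0$ by the decay of $F$, while for $\sigma>0$ it produces exactly the exponential factor. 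One then sets $f(\sigma):=\e^{t\sigma}f_t(\sigma)$, which the previous relation makes independent of $t$, and reading the norm identity \eqref{eq:paley2} backwards shows $f\in L^2(\R_+,\sigma^{k-1}\d\sigma)$ and recovers \eqref{eq:paley1}.

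I expect the contour-shifting and Fourier-support step in the converse to be the main obstacle: justifying the vanishing of the boundary terms and the exchange of limits requires combining the a priori $L^2$ control on the slices with decay estimates for $F$ both near the real axis and at infinity, and one must use the weight $t^{-k}$ carefully so that the full Bergman integrability---not merely membership in a Hardy-type space---is what is exploited. The direct statement, once Plancherel and Tonelli are in place, is by comparison a routine bookkeeping computation.
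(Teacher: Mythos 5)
The paper does not actually prove this theorem: it is recalled verbatim from the cited reference on Bergman projectors, so there is no internal proof to compare yours against, and your argument has to be judged on its own. On that basis it is correct, and it is essentially the standard proof of the weighted Paley--Wiener theorem. The direct part is complete as written: the Cauchy--Schwarz splitting with $\int_0^{+\infty}\e^{-2t\sigma}\sigma^{1-k}\d\sigma=\Gamma(2-k)/(2t)^{2-k}$ gives absolute convergence and (by domination) holomorphy, and the Plancherel-plus-Tonelli computation with $\int_0^{+\infty}\e^{-2t\sigma}t^{-k}\d t=\Gamma(1-k)/(2\sigma)^{1-k}$ yields \eqref{eq:paley2} exactly; both Gamma integrals converge precisely because $k<1$. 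For the converse, your plan is the right one, and your slice estimate is the key quantitative input: the sub-mean-value argument gives $\|F(\cdot+it)\|_{L^2(\R)}^2\lesssim t^{k-1}\|F\|_{L^2_{1-k}}^2$, and since $k<1$ this tends to $0$ as $t\to+\infty$, which is exactly what forces the negative-frequency part of each slice to vanish: if $f_{t'}$ had positive mass on $\{\sigma<0\}$, the relation $f_t(\sigma)=\e^{-(t-t')\sigma}f_{t'}(\sigma)$ and monotone convergence would make $\|f_t\|_{L^2}$ blow up instead of decay. The one step you should not do by naive contour shifting is that very relation: the Fourier integral of a slice is not absolutely convergent, so Cauchy's theorem on large rectangles requires boundary-term justifications that are awkward at the $L^2$ level. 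Two clean substitutes: either derive $\partial_t f_t=-\sigma f_t$ distributionally from the Cauchy--Riemann equation $\partial_t F=i\partial_s F$, or note that your slice bound gives $\sup_{t\ge t'}\|F(\cdot+it)\|_{L^2(\R)}<\infty$ for each fixed $t'>0$, so that $z\mapsto F(z+it')$ belongs to the Hardy space $\hardy$ and the classical Hardy-space Paley--Wiener theorem (also quoted in the paper) supplies the support property and the exponential factorization at once; letting $t'\to 0$ and running your Tonelli computation backwards then shows $f\in L^2(\R_+,\sigma^{k-1}\d\sigma)$ and recovers \eqref{eq:paley1} and \eqref{eq:paley2}.
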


For $k=1$, $F_u$ belongs to the Hardy space $\hardy$.
\begin{mydef}
The Hardy space $\hardy$ space of holomorphic functions of the upper half-plane $\C_+$ such that the following norm is finite~:
\[
\|F\|^2_{\hardy}:=\sup_{t>0}\int_{\R}|F(s+it)|^2\d s<+\infty.
\]
\end{mydef}
\begin{thm}[Paley-Wiener] For every $f\in L^2(\R_+)$, the following integral is absolutely convergent on $\C_+$
\begin{equation}\label{eq:paley3}
F(z)=\frac{1}{\sqrt{2\pi}}\int_0^{+\infty}\e^{iz\sigma}f(\sigma)\d\sigma,
\end{equation}
and defines a function $F$ in the Hardy space $\hardy$ which satisfies
\begin{equation}\label{eq:paley4}
\|F\|^2_{\hardy}=\int_{0}^{+\infty}|f(\sigma)|^2\d\sigma.
\end{equation}
Conversely, for every $F\in \hardy$, there exists $f\in L^2(\R_+)$ such that \eqref{eq:paley3} and \eqref{eq:paley4} hold.
\end{thm}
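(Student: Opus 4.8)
I would prove both implications by reducing everything, at each fixed height $t=\Im(z)>0$, to the Fourier--Plancherel theory on $\R$ together with the monotone dependence of $\e^{-2t\sigma}$ on $t$.

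\textbf{Forward direction.} First I would check that the integral in \eqref{eq:paley3} makes sense: since $|\e^{iz\sigma}|=\e^{-\Im(z)\sigma}$ for $\sigma>0$, Cauchy--Schwarz gives
\[
\int_0^{+\infty}|\e^{iz\sigma}f(\sigma)|\d\sigma\leq\|f\|_{L^2(\R_+)}\Big(\int_0^{+\infty}\e^{-2\Im(z)\sigma}\d\sigma\Big)^{\half}=\frac{\|f\|_{L^2(\R_+)}}{\sqrt{2\Im(z)}}<+\infty,
\]
a bound that is locally uniform on $\C_+$; holomorphy of $F$ then follows from Morera's theorem (or by differentiating under the integral sign). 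Writing $z=s+it$, for each fixed $t>0$ the function $s\mapsto F(s+it)$ is the inverse Fourier transform (in the convention of \eqref{eq:paley3}) of $\sigma\mapsto\e^{-t\sigma}f(\sigma)\un_{\sigma>0}$, so Plancherel yields
\[
\int_\R|F(s+it)|^2\d s=\int_0^{+\infty}\e^{-2t\sigma}|f(\sigma)|^2\d\sigma.
\]
Since $\e^{-2t\sigma}$ increases to $1$ as $t\downarrow 0$, monotone convergence shows that the supremum over $t>0$ equals $\int_0^{+\infty}|f(\sigma)|^2\d\sigma$, which is exactly the identity \eqref{eq:paley4} and proves $F\in\hardy$.

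\textbf{Converse direction.} Given $F\in\hardy$, I would set $\phi_t(s):=F(s+it)$ and $\psi_t:=\widehat{\phi_t}$; by definition of $\hardy$ and Plancherel, $\|\psi_t\|_{L^2(\R)}=\|\phi_t\|_{L^2(\R)}$ stays bounded by $\|F\|_{\hardy}$ uniformly in $t>0$. The heart of the argument is the transport relation
\[
\psi_{t}(\sigma)=\e^{-(t-t_0)\sigma}\psi_{t_0}(\sigma),\qquad 0<t_0<t,
\]
which I would derive by differentiating in $t$: the Cauchy--Riemann identity $\partial_t F=i\partial_s F$ together with integration by parts in $s$ produces the ODE $\partial_t\psi_t(\sigma)=-\sigma\,\psi_t(\sigma)$. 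Consequently $f(\sigma):=\e^{t\sigma}\psi_t(\sigma)$ is independent of $t$. It then remains to locate and control $f$: from $\int_\R\e^{-2t\sigma}|f(\sigma)|^2\d\sigma=\|\phi_t\|_{L^2}^2\leq\|F\|_{\hardy}^2$ for all $t>0$, letting $t\to+\infty$ forces $f=0$ almost everywhere on $(-\infty,0)$, so $f$ is supported in $\R_+$, while letting $t\downarrow 0$ and applying monotone convergence gives $f\in L^2(\R_+)$ with $\int_0^{+\infty}|f|^2\leq\|F\|_{\hardy}^2$. Fourier inversion of $\psi_t(\sigma)=\e^{-t\sigma}f(\sigma)\un_{\sigma>0}$ recovers precisely the representation \eqref{eq:paley3}, and the forward direction upgrades the last inequality to the equality \eqref{eq:paley4}.

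\textbf{Main obstacle.} The delicate point is the transport relation: the differentiation under the integral and, above all, the vanishing of the boundary terms in the integration by parts in $s$ are not immediate at the mere $L^2$ level. I expect to justify them using the interior regularity of $\hardy$ functions (each $\phi_t$ with $t>0$ is smooth and rapidly decaying, being the Fourier transform of an $L^2$ function damped by $\e^{-t\sigma}$), or equivalently by replacing the ODE computation with a contour-integral argument based on Cauchy's theorem applied to a tall rectangle, controlling the two vertical sides through the uniform $L^2$ bound.
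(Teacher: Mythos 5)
The paper never proves this statement: both Paley--Wiener theorems are quoted as classical facts (the weighted Bergman version with a reference to the literature), and the text only uses them as a dictionary between $\dot{H}^k(\heis)\cap V_0^+$, functions on $\R_+$, and holomorphic functions on $\C_+$. So there is no proof of record to compare yours against; what can be assessed is whether your argument stands on its own.

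It essentially does: this is the standard textbook proof. The forward direction (Cauchy--Schwarz for absolute convergence, Morera for holomorphy, Plancherel at each height $t$, monotone convergence as $t\downarrow 0$) is complete as written. In the converse direction, the transport relation $\psi_t(\sigma)=\e^{-(t-t_0)\sigma}\psi_{t_0}(\sigma)$ and the two limits $t\to+\infty$ (support of $f$ in $\R_+$) and $t\downarrow 0$ (the $L^2$ bound, upgraded to equality by the forward direction) are exactly the right skeleton. One caution about the justification you sketch for the delicate step: as phrased, invoking that $\phi_t$ is ``the Fourier transform of an $L^2$ function damped by $\e^{-t\sigma}$'' is circular, since that representation is precisely the conclusion of the theorem. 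The non-circular version of your first route is: smoothness of $\phi_t$ is automatic from holomorphy, and the quantitative control comes from the Cauchy integral formula on discs $D(s+it,t/2)\subset\C_+$ combined with the uniform bound $\sup_{t'>0}\|F(\cdot+it')\|_{L^2(\R)}\le\|F\|_{\hardy}$. This gives $\|\partial_s\phi_t\|_{L^2(\R)}\le Ct^{-1}\|F\|_{\hardy}$, so $\phi_t\in H^1(\R)$ and the identity $\widehat{\partial_s\phi_t}(\sigma)=i\sigma\psi_t(\sigma)$ holds distributionally with no boundary terms to discuss at all; the same estimates make $t\mapsto\phi_t$ a $C^1$ curve in $L^2(\R)$ with $\partial_t\phi_t=i\partial_s\phi_t$, whence $\partial_t\psi_t=-\sigma\psi_t$ as an $L^2$-valued ODE, which one integrates (e.g.\ testing against compactly supported functions of $\sigma$) to get the transport relation for a.e.\ $\sigma$. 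Your alternative contour argument on tall rectangles is equally viable, with the vertical sides controlled by the same mean-value estimates along a sequence $R_n\to\infty$. With either of these fixes spelled out, the proof is correct.
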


In the following, we will work with the holomorphic representations, the solutions being valued in the Hardy space $\hardy=\dot{H}^{\half}(\C_+)\cap \holo(\C_+)$.

\subsection{Construction of approximate solutions}\label{subsection:approximate_solutions_q} 

Given an initial data $u_0\in\hardy=\dot{H}^{\half}(\C_+)\cap \holo(\C_+)$ close enough to the ground state
\[
Q(z)=\frac{\sqrt{2}}{z+i},
\]
we want to construct a global solution to the Cauchy problem \eqref{eq:bergman}
\begin{equation*}
\begin{cases}
i\partial_t u=P_0(|u|^2u), \quad (t,z)\in\R\times \C_+
\\
u(t=0)=u_0
\end{cases}
\end{equation*}
which stays close to $Q$ (up to symmetries) for all times. The Bergman projection $P_0$ from $L^2(\C_+)$ to $A^2_1$ writes (see eg \cite{bergman_projectors})
\[
P_0(u)(z)=-\frac{1}{\pi}\int_{\R_+}\int_{\R}\frac{1}{(z-s+it)^2}u(s+it)\d s\d t, \quad z\in\C_+.
\]

We approximate $u$ by functions with higher regularity, satisfying equations for which we can use a classical global well-posedness result.

\paragraph{Construction of smoothing projectors $\widetilde{P}_{\varepsilon,M}$ :}
For $\varepsilon,M>0$, we define the projector $\widetilde{P}_{\varepsilon,M}$  as follows. Write $u\in \dot{H}^k(\C_+)\cap\holo(C_+)$, $k\leq \half$ (or $u\in H^k(\C_+)\cap\holo(\C_+)$, $k\geq 0$) as
\[
u(z)=\frac{1}{\sqrt{2\pi}}\int_0^{+\infty}e^{iz\sigma}f(\sigma)\d\sigma,
\]
then
\[
\widetilde{P}_{\varepsilon,M}(u)(z):=\frac{1}{\sqrt{2\pi}}\int_\varepsilon^{M}e^{iz\sigma}f(\sigma)\d\sigma.
\]
This projector cuts the high and low frequencies of $u$, in order to add some regularity on the solutions. It defines a bounded projector from $\dot{H}^k(\C_+)\cap\holo(\C_+)$ to itself for $k\leq\half$, and from $H^k(\C_+)\cap\holo(\C_+)$ to itself for $k\geq 0$.

\paragraph{Construction of a sequence of approximate solutions $(u_n)_n$ :}
We consider $f\in L^2(\R_+)$ such that for all $z\in\C_+$,
\[
u_0(z)=\frac{1}{\sqrt{2\pi}}\int_0^{+\infty}e^{iz\sigma}f(\sigma)\d\sigma,
\]
which satisfies
\[
\|u_0\|_{\dot{H}^\half(\C_+)}^2=\half\|f\|_{L^2(\R_+)}^2.
\]

Let us fix a sequence of positive numbers $(\varepsilon_n)_n$ going to zero, and consider the following initial data belonging to $H^2(\C_+)\cap\holo(\C_+)$
\[
u_0^n(z)
	:=\widetilde{P}_{\varepsilon_n,\frac{1}{\varepsilon_n}}u_0(z)
	=\frac{1}{\sqrt{2\pi}}\int_{\varepsilon_n}^{1/\varepsilon_n}e^{iz\sigma}f(\sigma)\d\sigma.
\]

We denote by $H^2_{\varepsilon}(\C_+)\cap\holo(\C_+)$ the space of functions $u\in H^2(\C_+)\cap\holo(\C_+)$ satisfying $\widetilde{P}_{\varepsilon,\frac{1}{\varepsilon}}(u)=u$. On this space, the $\dot{H}^k$-norms, $k\geq 0$, are equivalent :
\[
\varepsilon^{2k}\|u\|_{L^2(\C_+)}^2
	\leq \|u\|_{\dot{H}^k(\C_+)}^2
	=\half\int_\varepsilon^{1/\varepsilon}\sigma^{2k-1}|f(\sigma)|^2\d\sigma
	\leq \frac{1}{\varepsilon^{2k}}\|u\|_{L^2(\C_+)}^2.
\]

Define the projection $P_0^n$ as
\[
P_0^n
	=\widetilde{P}_{\varepsilon_n,\frac{1}{\varepsilon_n}} \circ P_0.
\]
We consider the following Cauchy problem
\begin{equation}\label{eq:schrodinger_bergman}
\begin{cases}
i\partial_tu_n=P_0^n(|u_n|^2u_n)
\\
u_n(t=0)=u_0^n
\end{cases},
\end{equation}
which is globally well-posed in $ H^2_{\varepsilon_n}(\C_+)\cap\holo(\C_+)$.
\begin{prop}\label{prop:well_posedness_limit}
Let $u_0^n\in H^2_{\varepsilon_n}(\C_+)\cap\holo(\C_+)$. Then there exists a unique solution $u_n\in\classeC^\infty(\R,H^2_{\varepsilon_n}(\C_+)\cap\holo(\C_+))$ of \eqref{eq:schrodinger_bergman} in the distribution sense.
\end{prop}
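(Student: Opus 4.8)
The plan is to read \eqref{eq:schrodinger_bergman} as an autonomous ordinary differential equation $\dot u_n=F(u_n)$ on the Hilbert space $H^2_{\varepsilon_n}(\C_+)\cap\holo(\C_+)$, with vector field $F(u):=-iP_0^n(|u|^2u)$, and to combine the Cauchy--Lipschitz theorem with conservation of the $L^2$-norm to pass from a local to a global solution. Throughout, $\varepsilon_n$ is fixed, so all constants are allowed to depend on it; the crucial structural fact is that, on $H^2_{\varepsilon_n}(\C_+)\cap\holo(\C_+)$, the norms $\dot H^k$ ($k\geq 0$) are all equivalent, as recalled just before the statement.

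First I would check that $F$ is a well-defined, locally Lipschitz vector field on $H^2_{\varepsilon_n}(\C_+)\cap\holo(\C_+)$. For $u$ in this space, the two-dimensional Sobolev embedding $H^2(\C_+)\hookrightarrow L^\infty(\C_+)$ together with the norm equivalence gives $\|u\|_{L^\infty}\leq C\|u\|_{H^2}\leq C\varepsilon_n^{-2}\|u\|_{L^2}$, whence $|u|^2u\in L^2(\C_+)$. Since $P_0$ is bounded on $L^2(\C_+)$ and $\widetilde P_{\varepsilon_n,1/\varepsilon_n}$ band-limits frequencies to $[\varepsilon_n,1/\varepsilon_n]$, the output $P_0^n(|u|^2u)$ automatically lies back in $H^2_{\varepsilon_n}(\C_+)\cap\holo(\C_+)$; and from the pointwise bound $\big||u|^2u-|v|^2v\big|\leq C(|u|^2+|v|^2)|u-v|$, the $L^2$-boundedness of $P_0^n$ and the above embedding one obtains
\[
\|F(u)-F(v)\|_{L^2}\leq C\varepsilon_n^{-4}\big(\|u\|_{L^2}^2+\|v\|_{L^2}^2\big)\|u-v\|_{L^2}.
\]
By the equivalence of norms this is a local Lipschitz bound in the $H^2$-norm as well; in fact $F$ is a cubic (hence real-analytic) map of the real Hilbert space into itself. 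The Cauchy--Lipschitz (Picard--Lindelöf) theorem then provides a unique maximal solution $u_n\in\classeC^1((-T_-,T_+),H^2_{\varepsilon_n}(\C_+)\cap\holo(\C_+))$, and differentiating $\dot u_n=F(u_n)$ repeatedly, using that $F$ is $\classeC^\infty$, upgrades it to $u_n\in\classeC^\infty$; being a classical solution, $u_n$ solves \eqref{eq:schrodinger_bergman} in the distribution sense.

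It remains to globalize, for which I would use conservation of mass. By the Paley--Wiener correspondence, $P_0^n$ is the orthogonal projection of $L^2(\C_+)$ onto the closed subspace $H^2_{\varepsilon_n}(\C_+)\cap\holo(\C_+)$; it is therefore self-adjoint and fixes $u_n(t)$, so that $\langle P_0^n(|u_n|^2u_n),u_n\rangle_{L^2}=\langle |u_n|^2u_n,u_n\rangle_{L^2}=\|u_n\|_{L^4}^4\in\R$. Consequently
\[
\frac{\d}{\d t}\|u_n(t)\|_{L^2}^2=2\Re\big\langle -iP_0^n(|u_n|^2u_n),\,u_n\big\rangle_{L^2}=2\Re\big(-i\|u_n\|_{L^4}^4\big)=0,
\]
so $\|u_n(t)\|_{L^2}=\|u_0^n\|_{L^2}$ on the whole interval of existence. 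The norm equivalence then yields $\|u_n(t)\|_{H^2}^2\leq C(1+\varepsilon_n^{-4})\|u_0^n\|_{L^2}^2$, a bound independent of $t$; the blow-up alternative excludes a finite maximal time, so $T_\pm=+\infty$ and the solution is global.

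The one genuinely delicate point is the local Lipschitz estimate: it is what forces us to stay on the band-limited space $H^2_{\varepsilon_n}$, where the $L^\infty$-control of the cubic nonlinearity and the quantitative equivalence of Sobolev norms are available, and where the frequency cut-off guarantees that the projected nonlinearity returns to the same regularity class. Everything else is the standard ODE-in-a-Banach-space machinery; note only that, since $|u|^2u$ is real-analytic but not holomorphic in $u$, all Lipschitz and smoothness statements are understood over the underlying real Hilbert space.
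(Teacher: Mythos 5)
Your proof is correct and follows essentially the same route as the paper: local existence by Cauchy--Lipschitz theory on the band-limited space $H^2_{\varepsilon_n}(\C_+)\cap\holo(\C_+)$ (where all the $\dot{H}^k$-norms are equivalent), then globalization via a conservation law combined with that norm equivalence. The only difference is minor: the paper globalizes using conservation of the momentum $\mathcal{P}(u)=\|u\|_{\dot{H}^{\half}(\C_+)}^2$, whereas you use conservation of the $L^2$-norm, which is equally valid here since, as you correctly justify, $P_0^n$ is an orthogonal (self-adjoint) projection on $L^2(\C_+)$ fixing the solution.
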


\begin{proof}
The local existence comes from the Cauchy-Lipschitz theory for PDEs. Indeed, $H^k(\C_+)\cap\holo(\C_+)$ is an algebra as soon as $k>1$, and in this case, $P_0$ extends to a bounded projector from $H^k(\C_+)\cap\holo(\C_+)$ to itself preserving $H^k_{\varepsilon_n}(\C_+)\cap\holo(\C_+)$, therefore $P_0^n$ extends to a bounded projector from $H^k(\C_+)\cap\holo(\C_+)$ onto $H^k_{\varepsilon_n}(\C_+)\cap\holo(\C_+)$. Moreover, the time of existence of the solution only depends on the norm of the initial data in $H^2(\C_+)$. In order to prove that local solutions extend globally in time, it is now enough to show that the $H^2$-norm of the solution stays bounded. Thanks to the equivalence of the $\dot{H}^k$-norms in $H^2_{\varepsilon_n}(\C_+)\cap\holo(\C_+)$, this lies in the fact that equation \eqref{eq:schrodinger_bergman} has conserved momentum
\[
\mathcal{P}(u):=(u,-iu_z)_{L^2(\C_+)}=\|u\|_{\dot{H}^\half(\C_+)}^2.
\]
\end{proof}

The energy
\(
\mathcal{E}(u)=\|u\|_{L^4(\C_+)}^4
\)
is also conserved.

We now show that $u_n(t)$ is close to the orbit $\M$ of the ground state $Q$. Thanks to Proposition \ref{prop:estimates_stability}, it is enough to focus on $\delta(u_n(t))$. But using the conservation laws, we know that for all $t\in\R$,
\[
\delta(u_n(t))=\delta(u_0^n).
\]
Moreover, by construction of $u_0^n$, we know that $\|u_0^n-u_0\|_{\dot{H}^\half(\C_+)}$ tends to $0$ as $n$ tends to $+\infty$, therefore $\delta(u_0^n)$ tends to $\delta(u_0)$. Assume that $\delta(u_0)<\delta_0$, then $\delta(u_0^n)<\delta_0$ after some rank $N$. Thanks to Proposition \ref{prop:estimates_stability}, we deduce that for all $n\geq N$ and $t\in\R$,
\begin{equation}\label{ineq:delta(u_n)}
d(u_n(t),\M)^2\leq C\delta(u_0^n).
\end{equation}

\subsection{Weak convergence}\label{subsection:limiting_weak_convergence}

In this part, we show that $u_n$ has a weak limit $u$, which is a weak solution to equation \eqref{eq:bergman}. In order to do so, we first prove that $t\mapsto\partial_tu_n(t)$ is uniformly bounded in $\dot{H}^{-\half}(\C_+)$, then use Ascoli's theorem.

Because of the conservation of the momentum and the fact that
\(
\mathcal{P}(u_0^n)\leq \mathcal{P}(u_0)
\)
for all $n\in\N$, we know that for all $n\in\N$ and $t\in\R$,
\[
\|u_n(t)\|_{\dot{H}^\half(\C_+)}
	\leq \|u_0\|_{\dot{H}^\half(\C_+)}.
\]
Using the equation satisfied by $u_n$, we also know that
\[
\|\partial_tu_n(t)\|_{\dot{H}^{-\half}(\C_+)}
	\leq \|P_0^n(|u_n|^2u_n)(t)\|_{\dot{H}^{-\half}(\C_+)}.
\]
By the dual Sobolev embedding $L^{\frac{4}{3}}(\C_+)\hookrightarrow \dot{H}^{-\half}(\C_+)$ and the fact that $P_0$ extends to a bounded projector from $L^p(\C_+)$ to itself as soon as $1<p<+\infty$ (see for example \cite{bergman_projectors}), we can estimate
\begin{align*}
\|P_0^n(|u_n|^2u_n)(t)\|_{\dot{H}^{-\half}(\C_+)}
	&\leq \|P_0(|u_n|^2u_n)(t)\|_{\dot{H}^{-\half}(\C_+)}\\
	&\leq C\|P_0(|u_n|^2u_n)(t)\|_{L^{\frac{4}{3}}(\C_+)}\\
	&\leq C'\||u_n|^2u_n(t)\|_{L^{\frac{4}{3}}(\C_+)}\\
	&\leq C'\|u_n(t)\|_{L^{4}(\C_+)}^3.
\end{align*}
Since $u_n(t)$ is uniformly bounded in $\dot{H}^\half(\C_+)$ and therefore in $L^4(\C_+)$, we conclude that the term $\|\partial_t u_n(t)\|_{\dot{H}^{-\half}(\C_+)}$ is also uniformly bounded.

We now prove that that up to a subsequence, $(u_n)_n$ converges in $\classeC([-T,T],\dot{H}^\half(\C_+)\cap\holo(\C_+))$ (with the weak topology) to a function $u$ for all $T>0$.

We know that $\dot{H}^{-\half}(\C_+)\cap\holo(\C_+)$ is separable, since it is isometric to $L^2(\R_+)$. Moreover, by cutting the Fourier function $f$ at infinity, one can see that $\dot{H}^{-\half}(\C_+)\cap\dot{H}^\half(\C_+)\cap\holo(\C_+)$ is dense in $\dot{H}^{-\half}(\C_+)\cap\holo(\C_+)$. We can therefore consider a countable sequence $(\varphi_k)_k$ in  $\dot{H}^{-\half}(\C_+)\cap\dot{H}^\half(\C_+)\cap\holo(\C_+)$ such that every function in $\dot{H}^{-\half}(\C_+)\cap\holo(\C_+)$ can be approximated by a subsequence of $(\varphi_k)_k$ for the $\dot{H}^{-\half}$-norm.

Fix $k\in\N$. Since $(t\mapsto\partial_t u_n(t))_n$ and $(t\mapsto u_n(t))_n$ are uniformly bounded in $\dot{H}^{-\half}(\C_+)$ and in $\dot{H}^{\half}(\C_+)$ respectively, the sequence $\ell_n(\cdot,\varphi_k):t\in[-T,T]\mapsto (u_n(t),\varphi_k)$ is equicontinuous and equibounded~: for all $n$ and $t$,
\[
|\partial_t\ell_n(t,\varphi_k)|
	=|(\partial_t u_n(t),\varphi_k)|
	\leq \|\partial_t u_n(t)\|_{\dot{H}^{-\half}(\C_+)}\|\varphi_k\|_{\dot{H}^\half(\C_+)}
\]
and
\[
|\ell_n(t,\varphi_k)|
	=|(u_n(t),\varphi_k)|
	\leq \| u_n(t)\|_{\dot{H}^{\half}(\C_+)}\|\varphi_k\|_{\dot{H}^{-\half}(\C_+)}.
\]

Applying Ascoli's theorem, for every $k\in\N$, there is a subsequence $(n_p)_p$ such that $(\ell_{n_p}(\cdot,\varphi_k))_p$ converges in $\classeC([-T,T],\C)$ to some continuous function $\ell(\cdot,\varphi_k)$ as $p$ tends to $+\infty$. By a diagonal argument, we can use the same subsequence for all $k\in\N$. Using a second diagonal argument on a sequence of times $(T_n)_n$ going to $+\infty$, we can assume that for all $k$, there exists $\ell(\cdot,\varphi_k)\in\classeC(\R,\C)$ such that for all $T>0$, the sequence $(\ell_{n_p}(\cdot,\varphi_k))_p$ converges in $\classeC([-T,T],\C)$ to $\ell(\cdot,\varphi_k)|_{[-T,T]}$. 

By density, $\ell$ extends to a bounded linear map $\ell\in\classeC(\R,(\dot{H}^{-\half}(\C_+)\cap\holo(\C_+))^*)$ (with the weak topology). Now, by duality, $\ell$ can be represented by $u\in\classeC(\R,\dot{H}^\half(\C_+)\cap\holo(\C_+))$~: for all $\varphi\in \dot{H}^{-\half}(\C_+)\cap\holo(\C_+)$,
\[
\ell(t,\varphi)=(u(t),\varphi).
\]
To conclude, by construction, for all $T>0$, the sequence $(\ell_{n_p}|_{[-T,T]})_p$ converges weakly to $\ell|_{[-T,T]}$ in the space $\classeC([-T,T],(\dot{H}^{-\half}(\C_+)\cap\holo(\C_+))^*)$, therefore $(u_n)_n$ converges weakly to $u$ in $\classeC([-T,T],\dot{H}^{\half}(\C_+)\cap\holo(\C_+))$. Passing to the limit, we conclude that $u$ is a global solution to the original equation \eqref{eq:bergman} in the distribution sense.

We deduce that
\begin{align*}
d(u(t),\M)^2
	&=\inf_{X\in\R\times\T\times\R_+^*}\|u(t)-T_XQ\|_{\dot{H}^\half(\C_+)}^2\\
	&\leq \inf_{X\in\R\times\T\times\R_+^*}\liminf_{n\to+\infty}\|u_n(t)-T_XQ\|_{\dot{H}^\half(\C_+)}^2.
\end{align*}
Since $X$ is not compact, this inequality is not sufficient if we want to apply inequality \eqref{ineq:delta(u_n)} to estimate $d(u(t),\M)$. In the following part, we construct a map $t\mapsto X_n(t)$, such that for all $t\in\R$, $u_n(t)$ is close to $T_{X_n(t)}Q$ and $(X_n(t))_{n\in\N}$ stays bounded, then use a compactness argument.

\subsection{Modulation}\label{subsection:limiting_modulation}

Recall the notation. Fix $u\in\hardy=\dot{H}^\half(\C_+)\cap\holo(\C_+)$, $X=(s,\theta,\alpha)\in\R\times\T\times\R_+^*$, we denote by $T_Xu$ the element of $\hardy$ satisfying
\[
T_Xu(z):=\e^{i\theta}\alpha u(\alpha^2(z-s)), \quad z\in\C_+.
\]
We write $X^{-1}=(-s,-\theta,\alpha^{-1})$ and
\[
|X|:=|s|+|\theta|+|\log(\alpha)|.
\]
We have also defined
\[
d(u,\M)=\inf_{X=(s,\theta,\alpha)\in \R\times\T\times\R_+^*}\|T_Xu-Q\|_{\dot{H}^\half(\C_+)},
\]
as the distance of $u$ to the orbit of $Q$
\[
\M=\{T_XQ\mid X\in \R\times\T\times\R_+^*\}.
\]

We choose $0<r<1$, and assume that $\|u_0-Q\|_{\dot{H}^{\half}(\C_+)}<r^2$. For $n\geq N$ large enough and $K>0$, the regularized initial data $u_0^n$ satisfies $\delta(u_0^n)<Kr^2$. Applying Proposition \ref{prop:estimates_stability}, there exist $c_0>0$ and $r_0>0$ such that if $0<r<r_0$, then $d(u_n(t),\M)<c_0r$ for all $t\in\R$.

We start from the observation that around time $t=0$, one can choose $X_n(t)=(0,0,1)$ for all $n\geq N$ since $\|u_0^n-Q\|_{\dot{H}^\half(\C_+)}<c_0r$. By continuity, we know that $\|u_n(t)-Q\|_{\dot{H}^\half(\C_+)}\leq (1+\varepsilon)c_0r$ on some small time interval, which can be taken independently of $n$. Indeed,
\begin{align*}
\|u_n(t)-Q\|_{\dot{H}^\half(\C_+)}^2
	&=\|u_n(t)\|_{\dot{H}^\half(\C_+)}^2+\|Q\|_{\dot{H}^\half(\C_+)}^2-2(u_n(t),Q)_{\dot{H}^\half(\C_+)}\\
	&=\|u_0^n\|_{\dot{H}^\half(\C_+)}^2+\|Q\|_{\dot{H}^\half(\C_+)}^2-2(u_n(t),-iQ_z)_{L^2(\C_+)},
\end{align*}
therefore the derivative of $\|u_n(t)-Q\|_{\dot{H}^\half(\C_+)}^2
$ is bounded by
\begin{align*}
\left|\frac{\d}{\d t}\|u_n(t)-Q\|_{\dot{H}^\half(\C_+)}^2\right|
	&=\left|2(\partial_t u_n(t),-iQ_z)_{L^2(\C_+)}\right|\\
	&\leq 2\|\partial_t u_n(t)\|_{\dot{H}^{-\half}(\C_+)}\|-iQ_z\|_{\dot{H}^\half(\C_+)}.
\end{align*}
But we have already seen that $\|\partial_t u_n(t)\|_{\dot{H}^{-\half}(\C_+)}$ is bounded independently of $t$ and $n$, therefore there exists $K>0$ such that for $n\geq N$ and $t\in\R$
\begin{align*}
\|u_n(t)-Q\|_{\dot{H}^{\half}(\C_+)}^2
	&\leq \|u_0^n-Q\|_{\dot{H}^{\half}(\C_+)}^2+K|t|\\
	&\leq (c_0r)^2+K|t|.
\end{align*}
For fixed $\varepsilon>0$, we conclude that $\|u_n(t)-Q\|_{\dot{H}^{\half}(\C_+)}\leq (1+\varepsilon) c_0r$ as long as $|t|\leq \frac{(1+\varepsilon)^2-1}{K}(c_0r)^2$.

Set $\varepsilon>0$ and $t_1:=\frac{(1+\varepsilon)^2-1}{K}(c_0r)^2$. Assume that at time $t_0$, there exists a bounded sequence $(X_n^0)_n$ in $\R\times\T\times\R_+^*$ such that for all $n$, $\| u_n(t_0)-T_{X_n^0}Q\|_{\dot{H}^{\half}(\C_+)}<c_0r$. By the above method, one can show that $\| u_n(t)-T_{X_n^0}Q\|_{\dot{H}^{\half}(\C_+)}\leq (1+\varepsilon)c_0r$ on $[t_0-t_1,t_0+t_1]$. Indeed, let $v_n:=T_{(X_n^0)^{-1}} u_n$. The equation satisfied by $u_n$ is not invariant by scaling, but one can explicit which equation is satisfied by $v_n$.  Recall that if
\[
u(z)=\frac{1}{\sqrt{2\pi}}\int_0^{+\infty}e^{iz\sigma}f(\sigma)\d\sigma,
\]
then
\[
\widetilde{P}_{\varepsilon,M} u(z)=\frac{1}{\sqrt{2\pi}}\int_{\varepsilon}^Me^{iz\sigma}f(\sigma)\d\sigma.
\]
Write $(X_n^0)=:(s_n^0,\theta_n^0,\alpha_n^0)$ and $\widetilde{P_0^n}:=\widetilde{P}_{\frac{\varepsilon_n}{(\alpha_n^0)^2},\frac{1}{\varepsilon_n(\alpha_n^0)^2}}\circ P_0$, then $v_n=T_{(X_n^0)^{-1}}u_n$ satisfies
\[
i(v_n)_t=\widetilde{P_0^n}(|v_n|^2v_n),
\]
moreover $\|v_n(t_0)-Q\|_{\dot{H}^{\half}(\C_+)}<c_0r$. But we have the same inequalities as above
\begin{align*}
\|\partial_tv_n(t)\|_{\dot{H}^{-\half}(\C_+)}
	&\leq \|\widetilde{P_0}(|v_n|^2v_n)(t)\|_{\dot{H}^{-\half}(\C_+)}\\
	&\leq C\|\widetilde{P_0}(|v_n|^2v_n)(t)\|_{L^{\frac{4}{3}}(\C_+)}\\
	&\leq C'\||v_n|^2v_n(t)\|_{L^{\frac{4}{3}}(\C_+)}\\
	&\leq C'\|v_n(t)\|_{L^{4}(\C_+)}^3.
\end{align*}
Since $\|v_n(t)\|_{L^{4}(\C_+)}=\|u_n(t)\|_{L^{4}(\C_+)}$ is uniformly bounded by conservation of the $L^4$-norm, we conclude that $\|v_n(t)-Q\|_{\dot{H}^{\half}(\C_+)}=\| u_n(t)-T_{X_n^0}Q\|_{\dot{H}^{\half}(\C_+)}\leq (1+\varepsilon) c_0r$ as long as $|t-t_0|\leq t_1$.

We construct $X_n$ as a piecewise $\classeC^1$ functional on $\R$ as follows. For $k\in\Z$, $X_n$ is constant on $[kt_1,(k+1)t_1[$, equal to some $X_n^k\in\R\times\T\times\R_+^*$ to be chosen.  We first set $X_n^{-1}=X_n^0=(0,0,1)$. Then, at time $t_k=kt_1$, $k\geq 1$, we use the fact that $d(u_n(t_k),\M)<r$ and choose $X_n^k$ such that $\|u_n(t_k)-T_{X_n^k}Q\|_{\dot{H}^\half(\C_+)}<c_0r$. Then from the above paragraph, $\|u_n(t)-T_{X_n^k}Q\|_{\dot{H}^\half(\C_+)}\leq (1+\varepsilon)c_0r$ on $[t_k,t_k+t_1]$. We do a similar construction for negative times. The map $X_n$ satisfies
\[
\|u_n(t)-T_{X_n(t)}Q\|_{\dot{H}^\half(\C_+)}\leq (1+\varepsilon)c_0r,
	\quad  t\in\R.
\]

It remains to show that $X_n$ is bounded independently of $n$ on bounded intervals. In order to do so, it is enough to control the gap between $X_n^{k-1}$ and $X_n^{k}$. By construction, at time $t_{k}$,
\[
\|u_n(t_k)-T_{X_n^{k-1}}Q\|_{\dot{H}^\half(\C_+)}\leq (1+\varepsilon)c_0r
\]
and
\[
\|u_n(t_k)-T_{X_n^k}Q\|_{\dot{H}^\half(\C_+)} < c_0r,
\]
therefore
\[
\|T_{X_n^{k-1}}Q-T_{X_n^k}Q\|_{\dot{H}^\half(\C_+)}\leq (2+\varepsilon)c_0r.
\]
Using the following Lemma, we conclude that if $r$ is chosen small enough, then there exists a constant $c_1>0$ such that for all $n\geq N$ and $k\in\Z$,
\[
|X_n^{k-1}(X_n^k)^{-1}|\leq c_1.
\]

\begin{lem}\label{lem:gap_TXQ_Q} For some constants $c_1,r_1>0$, the following holds. Let $X\in\R\times\T\times\R_+^*$ such that
\[
\|T_XQ-Q\|_{\dot{H}^\half(\C_+)}\leq r_1.
\]
Then
\[
|X|\leq c_1.
\]
\end{lem}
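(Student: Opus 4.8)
The plan is to reduce the lemma to the decay at infinity of the scalar product $(T_XQ,Q)_{\dot H^\half(\C_+)}$. Each $T_X$ is an isometry of $\dot H^\half(\C_+)$ (it is built from the symmetries preserving the $\dot H^1(\heis)$-norm, as one checks directly on the Fourier side below), so I would start from the exact identity
\[
\|T_XQ-Q\|_{\dot H^\half(\C_+)}^2=2\|Q\|_{\dot H^\half(\C_+)}^2-2\Re(T_XQ,Q)_{\dot H^\half(\C_+)}.
\]
It therefore suffices to show that $\Re(T_XQ,Q)_{\dot H^\half(\C_+)}$ stays bounded away from $\|Q\|_{\dot H^\half(\C_+)}^2$ once $X$ leaves a bounded set. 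Writing $Q(z)=\frac{1}{\sqrt{2\pi}}\int_0^{+\infty}e^{iz\sigma}f(\sigma)\d\sigma$, the explicit profile $Q(z)=\frac{\sqrt2}{z+i}$ and the identity $\int_0^{+\infty}e^{iz\sigma}e^{-\sigma}\d\sigma=\frac{i}{z+i}$ give $f(\sigma)\propto e^{-\sigma}$.

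Next I would compute everything on the Fourier side. From $T_Xu(z)=e^{i\theta}\alpha u(\alpha^2(z-s))$ one gets, after the substitution $\tau=\alpha^2\sigma$, the Fourier symbol $f_{T_XQ}(\sigma)=e^{i\theta}\alpha^{-1}e^{-is\sigma}f(\sigma/\alpha^2)$ (a change of variables in $\int|f_{T_XQ}|^2\d\sigma$ confirms the isometry claim at the same time). Using $(u,v)_{\dot H^\half(\C_+)}=c\int_0^{+\infty}f_u\overline{f_v}\d\sigma$, the resulting integral is a single exponential and evaluates explicitly; after clearing denominators the normalizing constant $c$ and $|c_0|^2$ cancel in the quotient, leaving
\[
\frac{(T_XQ,Q)_{\dot H^\half(\C_+)}}{\|Q\|_{\dot H^\half(\C_+)}^2}=\frac{2\alpha\,e^{i\theta}}{(1+\alpha^2)+is\alpha^2},
\qquad
\frac{\Re(T_XQ,Q)_{\dot H^\half(\C_+)}}{\|Q\|_{\dot H^\half(\C_+)}^2}\le\frac{2\alpha}{\sqrt{(1+\alpha^2)^2+s^2\alpha^4}}.
\]
A sanity check at $X=(0,0,1)$ returns the value $1$, consistent with $\|T_XQ-Q\|=0$ there.

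The last step is an elementary properness argument on the right-hand quotient. Since $\theta\in\T$ is bounded, $|X|=|s|+|\theta|+|\log\alpha|\to+\infty$ forces $|s|+|\log\alpha|\to+\infty$. I would bound the quotient by $\frac{1}{\ch(\log\alpha)}$ (dropping the $s$-term) to treat the regime $|\log\alpha|\to+\infty$, and by $\frac{2}{|s|\alpha}$ to treat the regime where $\log\alpha$ stays bounded while $|s|\to+\infty$; along any escaping sequence one of these forces the quotient to $0$. Consequently $\Re(T_XQ,Q)_{\dot H^\half(\C_+)}\to 0$ and $\|T_XQ-Q\|_{\dot H^\half(\C_+)}^2\to 2\|Q\|_{\dot H^\half(\C_+)}^2>0$ as $|X|\to+\infty$, so for $r_1$ small the sublevel set $\{X:\|T_XQ-Q\|_{\dot H^\half(\C_+)}\le r_1\}$ is bounded, yielding a constant $c_1$.

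The only genuine subtlety is the non-compactness of the parameter set in the $s$ and $\alpha$ directions; the explicit formula makes this transparent, since it exhibits the decay of $(T_XQ,Q)_{\dot H^\half(\C_+)}$ separately in each non-compact direction. I expect the main care to be needed in organizing the two regimes ($|\log\alpha|$ large versus $|s|$ large with $\log\alpha$ controlled) so that the decay is uniform along \emph{every} sequence with $|X|\to+\infty$, which is exactly what produces a single constant $c_1$ rather than a direction-dependent one.
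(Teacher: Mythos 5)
Your proposal is correct and takes essentially the same approach as the paper: both express $Q$ through its Laplace representation $f(\sigma)\propto e^{-\sigma}$, compute $(T_XQ,Q)_{\dot H^\half(\C_+)}$ explicitly in closed form, and deduce bounds on $|s|$ and $|\log\alpha|$ from the size of the resulting denominator. Two minor differences: your dilation factor $\alpha^{-1}$ in the Fourier symbol is the one consistent with $T_X$ being an isometry (the paper's displayed symbol carries $\alpha^{-2}$, an apparent slip that does not affect its conclusion), and you close with a properness argument via $\cosh(\log\alpha)$ treating both regimes $\alpha\gtrless 1$ at once, where the paper instead reduces to $\alpha\geq 1$ by the symmetry $X\leftrightarrow X^{-1}$ and extracts explicit constants.
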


\begin{proof}
Thanks to the invariance of the $\dot{H}^\half$-norm by symmetries, one can assume that $X=(s,\theta,\alpha)$ with $\alpha\geq 1$ up to exchanging $X$ and $X^{-1}$. We develop
\begin{align*}
\|T_XQ-Q\|_{\dot{H}^\half(\C_+)}^2
	&=\|T_XQ\|_{\dot{H}^\half(\C_+)}^2+\|Q\|_{\dot{H}^\half(\C_+)}^2-2(T_XQ,Q)_{\dot{H}^\half(\C_+)}\\
	&=2\pi-2(T_XQ,Q)_{\dot{H}^\half(\C_+)}.
\end{align*}
Now, recall that
\[
Q(z)
	=\frac{\sqrt{2}}{z+i}
	=\frac{1}{\sqrt{2\pi}}\int_0^{+\infty}e^{iz\sigma}f(\sigma)\d\sigma
\]
with
\[
f(\sigma)=-2i\sqrt{\pi}e^{-\sigma},
\]
and
\[
\|Q\|_{\dot{H}^\half(\C_+)}^2
	=\half\int_0^{+\infty}|f(\sigma)|^2\d\sigma
	=\pi.
\]
With this notation, the function corresponding to $T_XQ$ is $g(\sigma)=-2i\sqrt{\pi}e^{i\theta}e^{-is\sigma}e^{-\frac{\sigma}{\alpha^2}}\frac{1}{\alpha^2}$, therefore
\begin{align*}
\|T_XQ-Q\|_{\dot{H}^\half(\C_+)}^2
	&=2\pi-4\pi\Re\left(\int_0^{+\infty}e^{i\theta}e^{-is\sigma}e^{-\frac{\sigma}{\alpha^2}}\frac{1}{\alpha^2}e^{-\sigma}\d\sigma\right)\\
	&=2\pi-4\pi\Re\left(\frac{e^{i\theta}}{\alpha^2\left(is+\frac{1}{\alpha^2}+1\right)}\right).
\end{align*}
Set $\alpha=1+\beta$ with $\beta\geq 0$. We want to bound $s$ and $\beta$. By assumption,
\[
\left|\Re\left(\frac{e^{i\theta}}{is\frac{(1+\beta)^2}{2}+1+\beta+\frac{\beta^2}{2}}\right)-1\right|
	\leq \frac{r_1^2}{2\pi}=:\delta_1.
\]
Denote $z:=\frac{e^{i\theta}}{is\frac{(1+\beta)^2}{2}+1+\beta+\frac{\beta^2}{2}}$. The fact $|\Re(z)-1|\leq \delta_1$ implies that $|z|\geq \Re(z)\geq 1-\delta_1$, and if $\delta_1<1$, that
\[
\frac{1}{|z|}
	=\left|is\frac{(1+\beta)^2}{2}+1+\beta+\frac{\beta^2}{2}\right|
	\leq \frac{1}{1-\delta_1}.
\]
On the one hand, taking the real part,
\[
1+\beta+\frac{\beta^2}{2}\leq \frac{1}{1-\delta_1}.
\]
Since $\beta\in\R_+\mapsto 1+\beta+\frac{\beta^2}{2}$ is strictly increasing and going to $+\infty$ as $\beta$ goes to $+\infty$, there exists some constant $c>0$ such that $\beta\leq c$, or in other terms $0\leq \log\alpha\leq \log(1+c)$. On the other hand, since $\beta\geq 0$, the bound on the imaginary part implies that
\[
|s|\leq\frac{2}{1-\delta_1}.
\]
\end{proof}

Using the Lemma, assume that $3c_0r<r_1$ and fix $t\in\R$. We now know that $(X_n(t))_n$ takes values in a compact set : up to extraction, one can assume that $(X_n(t))_n$ converges to some $X(t)$. Moreover, for all $t\in\R$ and $n\in\N$, $\|u_n(t)-T_{X_n(t)}Q\|_{\dot{H}^\half(\C_+)}\leq (1+\varepsilon)c_0r$, therefore passing to the weak limit $n\to+\infty$ we conclude that $\|u(t)-T_{X(t)}Q\|_{\dot{H}^\half(\C_+)}\leq (1+\varepsilon) c_0r$. Since $\varepsilon>0$ can be taken arbitrarily small, we have proven the following reformulation of Theorem~\ref{thm:stability_Q}.

\begin{thm} For some $c_0>0$ and $r_0>0$, the following holds. Let $r\leq r_0$ and $u_0\in\dot{H}^\half(\C_+)\cap\holo(\C_+)$ such that $\|u_0-Q\|_{\dot{H}^{\half}(\C_+)}< r^2$. Then there exists a weak solution $u\in\classeC(\R,\dot{H}^\half(\C_+)\cap\holo(\C_+))$ (with the weak topology) to equation \eqref{eq:bergman}
\begin{equation*}
\begin{cases}
i\partial_t u=P_0(|u|^2u)
\\
u(t=0)=u_0
\end{cases}, \quad (t,z)\in\R\times \C_+
\end{equation*}
such that for all $t\in\R$,
\[
d(u(t),\M)\leq c_0r.
\]
\end{thm}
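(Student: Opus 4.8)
The plan is to assemble the three ingredients developed above—regularization, conservation laws combined with the local stability estimate, and a compactness/modulation argument—into a single limiting procedure. First I would regularize the initial data by setting $u_0^n:=\widetilde{P}_{\varepsilon_n,1/\varepsilon_n}u_0$ with $\varepsilon_n\to 0$, so that $u_0^n\in H^2_{\varepsilon_n}(\C_+)\cap\holo(\C_+)$ and $u_0^n\to u_0$ in $\dot{H}^\half(\C_+)$. By Proposition~\ref{prop:well_posedness_limit} the truncated equation $i\partial_t u_n=P_0^n(|u_n|^2u_n)$ has a global smooth solution $u_n$. Since this flow conserves both the momentum $\mathcal{P}$ and the energy $\mathcal{E}$, the quantity $\delta(u_n(t))=\delta(u_0^n)$ is constant in time; and since $u_0^n\to u_0$ in $\dot{H}^\half(\C_+)$ with $\|u_0-Q\|_{\dot{H}^\half(\C_+)}<r^2$, continuity of the two conserved quantities (using $\dot{H}^\half(\C_+)\hookrightarrow L^4(\C_+)$ for the energy term) gives $\delta(u_0^n)\lesssim r^2<\delta_0$ beyond some rank $N$. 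Proposition~\ref{prop:estimates_stability} then yields the uniform bound $d(u_n(t),\M)^2\leq C\delta(u_0^n)$, hence $d(u_n(t),\M)\leq c_0 r$ for all $t\in\R$ and all $n\geq N$.

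Next I would pass to a weak limit. The momentum bound $\|u_n(t)\|_{\dot{H}^\half(\C_+)}\leq\|u_0\|_{\dot{H}^\half(\C_+)}$ together with the equation, the dual Sobolev embedding $L^{4/3}(\C_+)\hookrightarrow\dot{H}^{-\half}(\C_+)$ and the $L^p$-boundedness of the Bergman projector gives a uniform bound on $\|\partial_t u_n(t)\|_{\dot{H}^{-\half}(\C_+)}$. Testing against a countable dense family $(\varphi_k)_k$ in the separable space $\dot{H}^{-\half}(\C_+)\cap\holo(\C_+)$, the scalar functions $t\mapsto(u_n(t),\varphi_k)$ are equibounded and equicontinuous, so Ascoli's theorem followed by a double diagonal extraction (over $k$ and over an exhaustion of $\R$ by intervals $[-T,T]$) produces $u\in\classeC(\R,\dot{H}^\half(\C_+)\cap\holo(\C_+))$ such that $u_n\rightharpoonup u$ for the weak topology, and passing to the limit shows that $u$ solves \eqref{eq:bergman} in the distribution sense.

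The delicate point, and the one I expect to be the main obstacle, is upgrading the uniform bound $d(u_n(t),\M)\leq c_0 r$ to a bound on $d(u(t),\M)$. Weak lower semicontinuity only gives $d(u(t),\M)^2\leq\inf_X\liminf_n\|u_n(t)-T_XQ\|_{\dot{H}^\half(\C_+)}^2$, which is useless because the modulation group $\R\times\T\times\R_+^*$ is non-compact, so the infimum over $X$ and the $\liminf$ over $n$ cannot be interchanged without control on the minimizers. To fix this I would build piecewise-constant modulation parameters $X_n(t)$ on a time grid of step $t_1\sim r^2$, chosen via the uniform $\dot{H}^{-\half}$ bound on $\partial_t$ so that $\|u_n(t)-T_{X_n^k}Q\|_{\dot{H}^\half(\C_+)}^2$ grows by at most a factor $(1+\varepsilon)^2$ across one interval, selecting at each node $t_k$ a parameter $X_n^k$ realizing $d(u_n(t_k),\M)<c_0 r$. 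This growth estimate is scaling-robust despite the non-scaling-invariant truncation: the shifted function $v_n=T_{(X_n^0)^{-1}}u_n$ solves $i(v_n)_t=\widetilde{P_0^n}(|v_n|^2v_n)$ with rescaled cutoffs, and its time derivative obeys the same $\dot{H}^{-\half}$ bound since $\|v_n(t)\|_{L^4(\C_+)}=\|u_n(t)\|_{L^4(\C_+)}$ is conserved. Consecutive nodes then satisfy $\|T_{X_n^{k-1}}Q-T_{X_n^k}Q\|_{\dot{H}^\half(\C_+)}\leq(2+\varepsilon)c_0 r$, and Lemma~\ref{lem:gap_TXQ_Q} (taking $3c_0r<r_1$) converts this into $|X_n^{k-1}(X_n^k)^{-1}|\leq c_1$. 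Hence on each bounded interval $X_n(t)$ ranges over a compact set uniformly in $n$; extracting a limit $X_n(t)\to X(t)$ and passing to the weak limit in $\|u_n(t)-T_{X_n(t)}Q\|_{\dot{H}^\half(\C_+)}\leq(1+\varepsilon)c_0 r$ yields $\|u(t)-T_{X(t)}Q\|_{\dot{H}^\half(\C_+)}\leq(1+\varepsilon)c_0 r$, and letting $\varepsilon\to 0$ gives $d(u(t),\M)\leq c_0 r$, closing the argument.
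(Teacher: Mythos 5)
Your proposal is correct and follows essentially the same approach as the paper: regularization by the frequency cutoffs $\widetilde{P}_{\varepsilon_n,1/\varepsilon_n}$, conservation of $\mathcal{P}$ and $\mathcal{E}$ combined with Proposition~\ref{prop:estimates_stability} to get the uniform bound $d(u_n(t),\M)\leq c_0r$, Ascoli plus diagonal extraction for the weak limit, and the piecewise-constant modulation parameters controlled via the rescaled equation for $T_{(X_n^0)^{-1}}u_n$ and Lemma~\ref{lem:gap_TXQ_Q}. Nothing essential is missing.
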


\section{Conditional orbital stability of the ground states \texorpdfstring{$Q_\beta$}{Q_beta} in the Schrödinger equation}\label{section:stability_Qbeta}

We now consider the Schrödinger equation on the Heisenberg group \eqref{eq:H}
\begin{equation*}
\begin{cases}
i\partial_t u-\hlapl u=|u|^2u
\\
u(t=0)=u_0
\end{cases}, \quad (t,x,y,s)\in\R\times\heis.
\end{equation*}
For $\beta\in(\beta_*,1)$, we are interested in solutions with initial data $u_0\in\dot{H}^1(\heis)$ satisfying
\[
\|u_0-\sqrt{1-\beta}Q_\beta\|_{\dot{H}^1(\heis)}<(1-\beta)r.
\]

Let $u$ be an eventual solution, and set
\[
u(t,x,y,s)=\sqrt{1-\beta}U((1-\beta)t,x,y,s+\beta t),
\]
so that $U$ is a solution to
\begin{equation}\label{eq:H_rescaled}
i\partial_t U-\frac{\hlapl+\beta D_s}{1-\beta} U=|U|^2U, \quad (t,x,y,s)\in\R\times\heis.
\end{equation}
The initial data $U_0$ satisfies
\[
\|U_0-Q_\beta\|_{\dot{H}^1(\heis)}<\sqrt{1-\beta}r.
\]

There are two relevant conserved quantities for this equation : the energy
\[
\mathcal{E}_\beta(V):=\half(-\frac{\hlapl+\beta D_s}{1-\beta} V,V)_{L^2(\heis)}-\frac{1}{4}\|V\|_{L^4(\heis)}^4,
\]
and the momentum
\[
\mathcal{P}(V):=(D_s V,V)_{L^2(\heis)},
	\quad V\in\dot{H}^1(\heis).
\]

Theorem \ref{thm:stability_Qbeta} is equivalent to prove that if $\beta$ is large, then one can construct a weak global solution $U$ to equation \eqref{eq:H_rescaled} which stays close to the orbit of $Q_\beta$ for all times, which leads to the following reformulation.
\begin{thm}\label{thm:stability_Qbeta_precise}
For some constants $c_0\in\R_+^*$ and $r_0\in\R_+^*$, for all $r\in(0,r_0)$, there exists $\beta^*(r)\in(0,1)$ such that the following holds. Let $\beta\in(\beta^*(r),1)$ and $U_0\in\dot{H}^1(\heis)$ satisfying
\begin{itemize}
\item if $U_0\in\dot{H}^1(\heis)\cap V_0^+$ :
\[
\|U_0-Q_\beta\|_{\dot{H}^1(\heis)}< r^2
\]
\item in the general case :
\[\|U_0-Q_\beta\|_{\dot{H}^1(\heis)}<\sqrt{1-\beta}r.
\]
\end{itemize}

Then there exists a global weak solution $U_\beta\in\classeC(\R,\dot{H}^1(\heis))$ (with the weak topology) to equation~\eqref{eq:H_rescaled}
\begin{equation*}
\begin{cases}
i\partial_t U_{\beta} -\frac{\hlapl+\beta D_s}{1-\beta} U_{\beta}=|U_{\beta}|^2U_{\beta}
\\
U_\beta(t=0)=U_0
\end{cases}
\end{equation*}
such that for all $t\in\R$, $U_\beta(t)$ is close to the orbit 
$\Qbeta=\{T_XQ_\beta \mid X\in\R\times\T\times\R_+^*\}$ of $Q_\beta$~:
\[
d(U_\beta(t),\Qbeta)\leq c_0r.
\]
\end{thm}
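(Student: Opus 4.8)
The plan is to reproduce the architecture of the proof of Theorem~\ref{thm:stability_Q} for the rescaled equation~\eqref{eq:H_rescaled}, the one genuinely new ingredient being a stability estimate for $Q_\beta$ obtained by perturbation from Proposition~\ref{prop:estimates_stability}. First I would regularize the data and the equation: cut the frequency variable $\sigma$ away from $0$ and $\infty$ and truncate the Hermite index, producing data in a space on which all $\dot{H}^k(\heis)$-norms are equivalent and on which $\frac{\hlapl+\beta D_s}{1-\beta}$ is a bounded operator. Cauchy--Lipschitz then gives smooth local solutions $U_n$, and conservation of the momentum $\mathcal{P}$ --- which controls $\|U_n\|_{\dot{H}^1(\heis)}$ on the truncated space --- makes them global, exactly as in Proposition~\ref{prop:well_posedness_limit}.

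The heart of the argument is the following spectral observation. Writing $U=U_0^++U^\perp$ along $\dot{H}^1(\heis)\cap V_0^+$ and its orthogonal complement, on $V_0^+$ the rescaled operator $-\frac{\hlapl+\beta D_s}{1-\beta}$ coincides with $D_s$ (independently of $\beta$), so that the $V_0^+$-component sees exactly the limiting dynamics, whereas on every other $V_n^{\pm}$ the same operator is bounded below in $\dot{H}^1(\heis)$ by a constant of size $\frac{c}{1-\beta}$. Consequently, conservation of $\mathcal{E}_\beta$ together with the momentum bound forces
\[
\|U^\perp(t)\|_{\dot{H}^1(\heis)}^2\lesssim (1-\beta),
\]
so the orthogonal part is negligible. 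It follows that the conserved quantities of $U(t)$ agree, up to $O(1-\beta)$, with the limiting invariants $\mathcal{P}$ and $\mathcal{E}$ of $U_0^+(t)$; combining this with the convergence $Q_\beta\to Q$ in $\dot{H}^1(\heis)$ from \cite{schroheis}, the quantity $\delta(U_0^+(t))$ can be made as small as we wish once $\beta$ is close to $1$ and $r$ is small. Proposition~\ref{prop:estimates_stability} then yields $d(U_0^+(t),\M)\lesssim r$, and since $\|U^\perp\|_{\dot{H}^1(\heis)}$ and $\|Q_\beta-Q\|_{\dot{H}^1(\heis)}$ are both small, $d(U(t),\Qbeta)\lesssim r$. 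The stronger smallness assumption $\|U_0-Q_\beta\|_{\dot{H}^1(\heis)}<\sqrt{1-\beta}\,r$ in the general case is precisely what guarantees that the initial orthogonal component is already of size $\sqrt{1-\beta}$, matching the bound preserved by the dynamics; when $U_0\in V_0^+$ this component essentially vanishes and the weaker hypothesis $\|U_0-Q_\beta\|_{\dot{H}^1(\heis)}<r^2$ suffices.

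To make the perturbation rigorous and uniform in $\beta$ I would, following the strategy announced in the introduction, interpolate through the family of equations~\eqref{eq:H_rescaled} with $\beta$ replaced by a parameter $\gamma\in[\beta,1)$: one checks continuity of these Cauchy problems in $\gamma$ and their convergence, as $\gamma\to1$, to the limiting Cauchy problem~\eqref{eq:bergman_evolution} treated in Section~\ref{section:stability_Q}. This continuation transports the stability already established for the limiting equation down to the speed $\beta$, and fixes the threshold $\beta^*(r)$. With the $Q_\beta$-stability estimate in hand, the remainder is routine: bound $\partial_t U_n$ in $\dot{H}^{-1}(\heis)$ via the dual Sobolev embedding and the boundedness of the relevant projectors on $L^{4/3}(\heis)$, extract by Ascoli and a diagonal argument a weak limit $U_\beta\in\classeC(\R,\dot{H}^1(\heis))$ solving~\eqref{eq:H_rescaled}, and build piecewise-constant modulation parameters $X_n(t)$ that remain bounded on compact time intervals, using the analogue of Lemma~\ref{lem:gap_TXQ_Q} for $Q_\beta$. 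Passing to the weak limit gives $d(U_\beta(t),\Qbeta)\le c_0 r$ for all $t$.

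I expect the main obstacle to be the transfer of the coercivity/stability estimate from $Q$ to $Q_\beta$ with constants uniform in $\beta$. Two points are delicate: closing the energy--momentum bound on $\|U^\perp\|_{\dot{H}^1(\heis)}$ in the energy-critical setting, where $\|U\|_{L^4(\heis)}^4$ is itself controlled only through $\|U\|_{\dot{H}^1(\heis)}$ and must be absorbed near the ground-state threshold; and ensuring that every constant produced along the $\gamma$-continuation (in particular the modulation gap bound) does not degenerate as $\gamma\to1$, since the modulation group $\R\times\T\times\R_+^*$ is non-compact and the only available structural input is Proposition~\ref{prop:estimates_stability} for the limiting system.
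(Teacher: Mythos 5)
Your overall architecture does match the paper's: truncation in frequency and Hermite index, the decomposition $U=U^++U^\perp$, coercivity of $-\frac{\hlapl+\gamma D_s}{1-\gamma}$ off $V_0^+$, Proposition \ref{prop:estimates_stability} applied to the $V_0^+$ component, then Ascoli plus piecewise-constant modulation. But two steps, as you state them, would not survive being made precise, and they are exactly where the paper has to work.

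First, the claim that once $\|U^\perp(t)\|_{\dot{H}^1(\heis)}^2\lesssim 1-\beta$ ``the conserved quantities of $U(t)$ agree, up to $O(1-\beta)$, with the limiting invariants of $U^+(t)$'' is false for the energy. The orthogonal component enters $\mathcal{E}_\beta$ with the weight $\frac{1}{1-\beta}$, so the bound $\|U^\perp(t)\|_{\dot{H}^1(\heis)}^2\lesssim 1-\beta$ only makes its energy contribution $O(1)$ --- potentially comparable to $\mathcal{E}(Q)$ itself --- and nothing prevents an $O(1)$ amount of energy from migrating into the orthogonal modes as time evolves while every conservation law holds. Hence $\delta(U^+(t))\lesssim r^2$ does not follow from ``energy and momentum nearly those of $Q$''. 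The paper's Proposition 4.7 is deliberately asymmetric: energy conservation is used only one-sidedly, dropping the nonnegative orthogonal term to get the lower bound $\|U^+(t)\|_{L^4(\heis)}^4\geq\|Q\|_{L^4(\heis)}^4-Cr^2$, while the upper bound comes from a separate variational ingredient, the sharp constant $I_+$ of the embedding $\dot{H}^1(\heis)\cap V_0^+\hookrightarrow L^4(\heis)$ together with the identity $(D_sQ,Q)_{L^2(\heis)}=\|Q\|_{L^4(\heis)}^4=I_+=\pi^2$, fed by the momentum bound:
\[
\|U^+(t)\|_{L^4(\heis)}^4\leq \frac{1}{I_+}\,(D_sU^+(t),U^+(t))_{L^2(\heis)}^2\leq \|Q\|_{L^4(\heis)}^4+O(r^2).
\]
This use of the variational characterization of $Q$ is absent from your proposal, and without it the $L^4$ part of $\delta(U^+(t))$ cannot be closed.

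Second, the $\gamma$-continuation as you describe it (``check continuity of the Cauchy problems in $\gamma$ and transport the stability of the limiting equation'') does not by itself produce a threshold $\beta^*(r)$ uniform in $n$ and $t$, and run naively it already fails at the level of the data. The constant in $\|U^\perp\|^2\lesssim 1-\gamma$ involves the a priori $\dot{H}^1$ bound on $U_{\gamma,n}(t)$, which on the truncated space comes from the momentum with an equivalence constant growing with $n$; this alone would give $\beta^*(n)\to 1$. The paper's fix has two concrete devices your plan omits: (a) the initial data is itself interpolated along the family, $U_0^\gamma:=\frac{1-\gamma}{1-\beta}U_0+\frac{\gamma-\beta}{1-\beta}Q$, which is what keeps $\mathcal{E}_\gamma(U_0^{\gamma,n})\leq\mathcal{E}(Q)+C_0r^2$ uniformly in $\gamma$ --- with the fixed data $U_0$, whose orthogonal part is only of size $\sqrt{1-\beta}\,r$, one gets $\mathcal{E}_\gamma(U_0)\sim\frac{1-\beta}{1-\gamma}r^2\to\infty$ as $\gamma\to1$; and (b) a bootstrap in $\gamma$: defining $\beta_0(n,t)$ as the minimal parameter such that $\|W_{\gamma,n}(t)\|_{\dot{H}^1(\heis)}\leq r^2$ for all $\gamma\geq\beta_0(n,t)$, continuity in $\gamma$ forces equality $\|W_{\beta_0,n}(t)\|_{\dot{H}^1(\heis)}=r^2$ at the threshold, where the momentum bound makes the coercivity constant uniform and yields $r^2\leq C_1(1+C^2)\sqrt{1-\beta_0}$, hence $\beta_0\leq\beta^*(r)$ independently of $n$ and $t$. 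Nothing is ``transported'' from the limiting equation: Proposition \ref{prop:estimates_stability} for $Q$ is the only stability input, applied directly at $\gamma=\beta$. (A minor related point: the paper modulates with respect to $Q$ throughout, so Lemma \ref{lem:gap_TXQ_Q} itself suffices; no analogue for $Q_\beta$ is needed, and none is available by explicit computation.)
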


Comparing to the strategy deployed for the half-wave equation, the gap
\[
\delta_\beta(V)
	:=\left|\mathcal{E}_\beta(V)-\mathcal{E}_\beta(Q_\beta)\right|+\left|\mathcal{P}(V)-\mathcal{P}(Q_\beta)\right|,
	\quad V\in\dot{H}^1(\heis),
\]
does not here directly control the distance of $V$ to $\Qbeta$. Indeed, even the fact that $\delta_\beta(V)=0$ does not imply that $V$ belongs to $\Qbeta$. This is due to the fact that we can only use two conservation laws (energy and momentum) here, whereas an additional conservation laws was available for the half-wave equation : the mass of the solution.

However, using that $Q_\beta$ tends to $Q$ as $\beta$ tends to $1$,  one can instead show that the component of the solution along the space $V_0^+$ is close to $Q$ and control the rest separately. More precisely, decompose
\[U(t)=U^+(t)+W(t),
\]
where $U^+(t)\in \dot{H}^1(\heis)\cap V_0^+$ and $W(t)\in\bigoplus_{(k,\pm)\neq(0,+)}\dot{H}^1(\heis)\cap V_k^\pm$. If we know that $W(t)$ is small enough, then $\delta_\beta(U(t))\approx\delta(U^+(t))$. This enables us to estimate the distance $d(U^+(t),\M)$ of $U^+(t)$ to the orbit of $Q$, and therefore the distance of $U(t)$ to the orbit of $Q_\beta$ for $\beta$ close to $1$.

The plan of the proof is is as follows. Fix $\beta\in(0,1)$. We approximate the initial data and the equation by smooth global functions $(U_{\gamma,n})_{\gamma\in[\beta,1),n\in\N}$ valued in $H^2(\heis)$ in part \ref{subsection:approximate_solutions_qbeta}. We then decompose
\[
U_{\gamma,n}(t)=U_{\gamma,n}^+(t)+W_{\beta,n}(t),
\]
where $U_{\gamma,n}^+(t)\in \dot{H}^1(\heis)\cap  V_0^+$ and $W_{\gamma,n}(t)\in\bigoplus_{(k,\pm)\neq(0,+)}\dot{H}^1(\heis)\cap V_k^\pm$. In part \ref{subsection:limit_beta_n_fixed}, we fix $n\in\N$, and study the limit $\gamma\to 1$. We prove by using the conservation laws that $W_{\gamma,n}(t)$ stays small and that the gap $\delta(U_{\gamma,n}^+(t))$ is controlled as $\delta(U_{\gamma,n}^+(t))\lesssim r^2$ for $\gamma\geq \beta^*(n,t)$, which leads to an upper bound
\begin{equation}\label{ineq:distUM}
d(U_{\gamma,n}(t),\M)< c_0r,
	\quad t\in\R, \gamma\in[\max(\beta^*(n,t),\beta),1).
\end{equation}
Then, we show that the lower bound $\beta^*(n,t)$ can be taken independently of $n$ and $t$. Finally, in part \ref{subsection:limit_n_beta_fixed}, we fix $\beta\geq\beta_*$ and use the same method as for the limiting equation to find an upper bound on the modulation parameters $(X_{\beta,n}(t))_{n\in\N}$ in order to pass to the limit $n\to+\infty$ in the above inequality \eqref{ineq:distUM}.

\subsection{Construction of approximate solutions}\label{subsection:approximate_solutions_qbeta}

\paragraph{Construction of a sequence of smoothing projectors $\Pi^{(n)}$ :}

We define a sequence of projectors $\Pi^{(n)}$ close to identity, mapping elements of $\dot{H}^s(\heis)$ ($s=\pm 1$) to smoother functions, by cutting frequencies in the decomposition
\[
\dot{H}^{s}(\heis)=\bigoplus_{k\in\N}\bigoplus_{\pm}\dot{H}^{s}(\heis)\cap V_k^\pm.
\]
Using these projectors, we consider a sequence of equations approximating \eqref{eq:H_rescaled} for which the Cauchy problem is globally well-posed.

Let $u\in \dot{H}^{s}(\heis)$, $s=\pm 1$, which we decompose as a series of elements of $\dot{H}^s(\heis)\cap V_k^{\pm}$ for $(k,\pm)\in(\N,\pm)$. Write
\[
u=\sum_{k=0}^{+\infty}\sum_\pm\Pi_{k,\pm}(u),
\]
where for all $(k,\pm)\in(\N,\pm)$, $\Pi_{k,\pm}(u)\in \dot{H}^s(\heis)\cap V_k^{\pm}$. Then
\[
\|u\|_{\dot{H}^s(\heis)}^2
	=\sum_{k\in\N}\sum_\pm\int_{\R_{\pm}}((k+1)|\sigma|)^s\int_{\R^2}|\widehat{\Pi_{k,\pm}(u)}(x,y,\sigma)|^2\d x\d y\d\sigma.
\]

Let $n\in\N$, we define $\Pi^{(n)}(u)$ as follows.  We take the $n$-th partial sum and cut off the frequencies $|\sigma|\to 0$ and $|\sigma|\to+\infty$ :
\begin{align*}
\widehat{\Pi^{(n)}(u)}(x,y,\sigma)
	&:=\sum_{k=0}^{n}\sum_\pm\widehat{\Pi_{k,\pm}(u)}(x,y,\sigma)\un_{\frac{1}{n}\leq |\sigma|\leq n}.
\end{align*}
Consequently,
\[
\|\Pi^{(n)}(u)\|_{\dot{H}^s(\heis)}^2
	=\sum_{k=0}^n\sum_{\pm}\int_{\{\sigma\in\R_{\pm}\}\cap\{\frac{1}{n}\leq |\sigma|\leq n\}}((k+1)|\sigma|)^s\int_{\R^2}|\widehat{\Pi_{k,\pm}(u)}(x,y,\sigma)|^2\d x\d y\d\sigma
\]
converges to $\|u\|_{\dot{H}^s(\heis)}^2$ as $n$ goes to $+\infty$.

Moreover, if $u\in\dot{H}^1(\heis)$, then $\Pi^{(n)}(u)$ belongs to $H^2(\heis)$. Indeed,
\begin{align*}
\|\Pi^{(n)}(u)\|_{H^2(\heis)}^2
	&=\sum_{k=0}^n\sum_{\pm}\int_{\{\sigma\in\R_{\pm}\}\cap\{\frac{1}{n}\leq |\sigma|\leq n\}}(1+(k+1)^2|\sigma|^2)\int_{\R^2}|\widehat{\Pi_{k,\pm}(u)}(x,y,\sigma)|^2\d x\d y\d\sigma,
\end{align*}
but on the set $\{\frac{1}{n}\leq |\sigma|\leq n\}$,
\[
(1+(k+1)^2|\sigma|^2)
	\leq n(n+2)(k+1)|\sigma|,
\]
therefore $\|\Pi^{(n)}(u)\|_{H^2(\heis)}$ is finite.

\paragraph{Construction of a sequence of approximate solutions $(U_{\gamma,n})_{\gamma\in[\beta,1),n\in\N}$ :}

Fix $\beta\in(0,1)$, $r>0$ and $U_0\in\dot{H}^1(\heis)$ such that
\[
\|U_0-Q_\beta\|_{\dot{H}^1(\heis)}<\sqrt{1-\beta}r.
\]

We want to construct a global solution to \eqref{eq:H_rescaled}
\begin{equation*}
\begin{cases}
i\partial_t U_{\beta} -\frac{\hlapl+\beta D_s}{1-\beta} U_{\beta}=|U_{\beta}|^2U_{\beta}
\\
U_\beta(t=0)=U_0
\end{cases}
\end{equation*}
such that for all $t\in\R$,
\[
d(U_\beta(t),\Qbeta)\leq c_0r.
\]
By approximation, the idea would be to consider a sequence of equations
\begin{equation}\label{eq:H_rescaled_n}
\begin{cases}
i\partial_t U_{\beta,n} -\frac{\hlapl+\beta D_s}{1-\beta} U_{\beta,n}=\Pi^{(n)}(|U_{\beta,n}|^2U_{\beta,n})
\\
U_{\beta,n}(t=0)=U_0^{\beta,n}=\Pi^{(n)}(U_0)
\end{cases},
\quad n\in\N,
\end{equation}
for which one can show that for all $n$ large, there exists $\beta^*(n)$ such that if  $\beta\geq\beta^*(n)$, then
\[
d(U_{\beta,n}(t),\Qbeta)\leq c_0r, \quad t\in\R.
\]

In order to get a lower bound $\beta^*(n)$ independent of $n$, we rather construct a set of initial data $(U_0^{\gamma,n})_{\gamma\in[\beta,1),n\in\N}$ and equations
\begin{equation}\label{eq:H_rescaled_n_beta}
\begin{cases}
i\partial_t U_{\gamma,n} -\frac{\hlapl+\gamma D_s}{1-\gamma} U_{\gamma,n}=\Pi^{(n)}(|U_{\gamma,n}|^2U_{\gamma,n})
\\
U_{\gamma,n}(t=0)=U_0^{\gamma,n}=\Pi^{(n)}(U_0^\gamma)
\end{cases},
\quad n\in\N,\gamma\in[\beta,1),
\end{equation}
then use a continuity argument.

For $\gamma\in[\beta,1)$, the initial data $U_0^{\gamma}$ is defined as follows :
\[
U_0^\gamma:=\frac{1-\gamma}{1-\beta}U_0+\frac{\gamma-\beta}{1-\beta} Q.
\]
If $U_0\in \dot{H}^1(\heis)\cap V_0^+$, we choose $U_0^\gamma$ constant equal to $U_0$. However, in the general case, we cannot use this choice because we need the initial data $U_0^\gamma$ to go to $\dot{H}^1(\heis)\cap V_0^+$ as $\gamma$ tends to $1$.

\begin{lem}
For some constants $C_0>0$, $\beta_*(r)\in(0,1)$ and $N(r)\in \N$ the following holds. Assume that $\beta\in(\beta_*(r),1)$. Then for all $n\geq N(r)$ and $\gamma\in[\beta,1)$,
\[
\left|\mathcal{E}_{\gamma}(U_0^{\gamma,n})-\mathcal{E}(Q)\right|+\left|\mathcal{P}(U_0^{\gamma,n})-\mathcal{P}(Q)\right|
	<C_0r^2.
\]
\end{lem}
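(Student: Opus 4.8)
The plan is to split the smoothed datum along $V_0^+$ and its orthogonal complement, $U_0^{\gamma,n}=(U_0^{\gamma,n})^++W^{\gamma,n}$ with $(U_0^{\gamma,n})^+\in\dot H^1(\heis)\cap V_0^+$ and $W^{\gamma,n}\in\bigoplus_{(k,\pm)\neq(0,+)}\dot H^1(\heis)\cap V_k^\pm$, and to show that $(U_0^{\gamma,n})^+$ is $\dot H^1$-close to $Q$ while $W^{\gamma,n}$ stays negligible even after multiplication by the singular factor $\tfrac1{1-\gamma}$ hidden in the kinetic form. First I would record the reference values at $Q$: on $V_0^+$ one has $-\hlapl=D_s$, so $\mathcal E_\gamma$ restricted to $V_0^+$ does not depend on $\gamma$ and $\mathcal E(Q)$ denotes this common value; the profile equation $D_sQ=\Pi_0^+(|Q|^2Q)$ then gives $\mathcal P(Q)=\|Q\|_{\dot H^1(\heis)}^2=\|Q\|_{L^4(\heis)}^4$ and $\mathcal E(Q)=\tfrac14\|Q\|_{L^4(\heis)}^4$. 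Since the quadratic form $K_\gamma(v):=(-\tfrac{\hlapl+\gamma D_s}{1-\gamma}v,v)_{L^2(\heis)}$ is diagonal in the decomposition $\bigoplus_{k,\pm}V_k^\pm$, it is enough to treat $K_\gamma$ and $\|\cdot\|_{L^4(\heis)}^4$ on the two pieces separately.

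The delicate point, and the main obstacle, is the remainder. A one-line computation on $V_k^\pm$ gives $K_\gamma(v_k^\pm)=\tfrac{k+1\mp\gamma}{(k+1)(1-\gamma)}\|v_k^\pm\|_{\dot H^1(\heis)}^2$, whose coefficient lies in $[\tfrac1{2(1-\gamma)},\tfrac2{1-\gamma}]$ for every $(k,\pm)\neq(0,+)$; hence $K_\gamma(W^{\gamma,n})\le\tfrac2{1-\gamma}\|W^{\gamma,n}\|_{\dot H^1(\heis)}^2$. Since $\Pi^{(n)}$ is linear and $Q\in V_0^+$, one has $W^{\gamma,n}=\tfrac{1-\gamma}{1-\beta}\Pi^{(n)}\big((\id-\Pi_0^+)U_0\big)$, so that $\|W^{\gamma,n}\|_{\dot H^1(\heis)}^2\le\big(\tfrac{1-\gamma}{1-\beta}\big)^2\|(\id-\Pi_0^+)U_0\|_{\dot H^1(\heis)}^2$ and the blow-up is absorbed:
\[
K_\gamma(W^{\gamma,n})\le\frac{2(1-\gamma)}{(1-\beta)^2}\,\|(\id-\Pi_0^+)U_0\|_{\dot H^1(\heis)}^2\le\frac{2}{1-\beta}\,\|(\id-\Pi_0^+)U_0\|_{\dot H^1(\heis)}^2.
\]
To finish the remainder I would bound $\|(\id-\Pi_0^+)U_0\|_{\dot H^1(\heis)}$ by $\|U_0-Q_\beta\|_{\dot H^1(\heis)}+\|(\id-\Pi_0^+)Q_\beta\|_{\dot H^1(\heis)}$; testing the profile equation for $Q_\beta$ against its own orthogonal part and using the lower bound on $K_\beta$ yields $\|(\id-\Pi_0^+)Q_\beta\|_{\dot H^1(\heis)}\lesssim(1-\beta)\|Q_\beta\|_{\dot H^1(\heis)}^3=O(1-\beta)$. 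Choosing $\beta_*(r)$ so that $1-\beta\le r^2$ for $\beta\ge\beta_*(r)$, this gives $\|(\id-\Pi_0^+)U_0\|_{\dot H^1(\heis)}^2\lesssim(1-\beta)r^2$, hence $K_\gamma(W^{\gamma,n})\lesssim r^2$ uniformly in $\gamma\in[\beta,1)$ and $n$.

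For the $V_0^+$-part I would use $(U_0^{\gamma,n})^+=\Pi^{(n)}\big[(U_0^\gamma)^+\big]$ and $(U_0^\gamma)^+-Q=\tfrac{1-\gamma}{1-\beta}\Pi_0^+(U_0-Q)$, together with $\|\Pi^{(n)}Q-Q\|_{\dot H^1(\heis)}<r^2$ for $n\ge N(r)$, the convergence $Q_\beta\to Q$ in $\dot H^1(\heis)$ from \cite{schroheis} (which for $\beta\ge\beta_*(r)$ makes $\|Q_\beta-Q\|_{\dot H^1(\heis)}\le r^2$), and $\sqrt{1-\beta}\le r$, to obtain $\|(U_0^{\gamma,n})^+-Q\|_{\dot H^1(\heis)}\lesssim r^2$ and hence, adding $\|W^{\gamma,n}\|_{\dot H^1(\heis)}\le\sqrt{1-\beta}\,r\le r^2$, the global bound $\|U_0^{\gamma,n}-Q\|_{\dot H^1(\heis)}\lesssim r^2$. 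Assembling: $K_\gamma(U_0^{\gamma,n})=\|(U_0^{\gamma,n})^+\|_{\dot H^1(\heis)}^2+K_\gamma(W^{\gamma,n})=\mathcal P(Q)+O(r^2)$; the Sobolev embedding $\dot H^1(\heis)\hookrightarrow L^4(\heis)$ converts the $\dot H^1$-closeness into $\|U_0^{\gamma,n}\|_{L^4(\heis)}^4=\|Q\|_{L^4(\heis)}^4+O(r^2)$; and $|\mathcal P(W^{\gamma,n})|\le\|W^{\gamma,n}\|_{\dot H^1(\heis)}^2\lesssim(1-\beta)r^2$ yields $\mathcal P(U_0^{\gamma,n})=\mathcal P(Q)+O(r^2)$. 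Feeding these into $\mathcal E_\gamma=\half K_\gamma-\tfrac14\|\cdot\|_{L^4(\heis)}^4$ produces $\mathcal E_\gamma(U_0^{\gamma,n})=\mathcal E(Q)+O(r^2)$, which with the momentum estimate proves the claim for a suitable $C_0$. The only genuinely delicate step is the displayed one, where the $\tfrac1{1-\gamma}$ degeneration of the kinetic form as $\gamma\to1$ is defeated by the quadratic smallness of the remainder; uniformity in $n$ is automatic since $\Pi^{(n)}$ is a contraction on every homogeneous Sobolev space.
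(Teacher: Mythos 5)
Your proof is correct, and its skeleton is the same as the paper's: decompose $U_0^{\gamma,n}=(U_0^{\gamma,n})^++W^{\gamma,n}$ along $V_0^+$ and its complement, note that $W^{\gamma,n}=\tfrac{1-\gamma}{1-\beta}\Pi^{(n)}\bigl((\id-\Pi_0^+)U_0\bigr)$ carries the factor $\tfrac{1-\gamma}{1-\beta}$, and use the quadratic smallness $\|(\id-\Pi_0^+)U_0\|_{\dot{H}^1(\heis)}^2\lesssim(1-\beta)r^2$ to absorb the $\tfrac1{1-\gamma}$ degeneration of the kinetic form, while the $V_0^+$ part is $\gdO(r^2)$-close to $Q$ so that the profile identities $\mathcal{P}(Q)=\|Q\|_{L^4(\heis)}^4$ and $\mathcal{E}(Q)=\tfrac14\|Q\|_{L^4(\heis)}^4$ give the conclusion. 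Two details differ genuinely from the paper, both to your advantage. First, where the paper inputs the refined convergence rate $\|Q_\beta-Q\|_{\dot{H}^1(\heis)}=o(\sqrt{1-\beta})$ proved in its appendix, you re-derive the only fact actually needed, $\|(\id-\Pi_0^+)Q_\beta\|_{\dot{H}^1(\heis)}=\gdO(1-\beta)$, by testing the profile equation against its own orthogonal component and using the coercivity of the kinetic form off $V_0^+$; this is exactly the mechanism of the appendix's bootstrap, but used in a single non-iterated step, and it makes the lemma self-contained modulo the qualitative convergence $Q_\beta\to Q$ in $\dot{H}^1(\heis)$ from \cite{schroheis} (which you still need, both for $\|Q_\beta-Q\|_{\dot{H}^1(\heis)}\leq r^2$ and for the uniform bound on $\|Q_\beta\|_{\dot{H}^1(\heis)}$). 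Second, by writing $(U_0^{\gamma,n})^+-Q=\Pi^{(n)}\bigl[(U_0^\gamma)^+-Q\bigr]+\bigl(\Pi^{(n)}Q-Q\bigr)$ and using that $\Pi^{(n)}$ is a contraction, the only rank condition you need is $\|\Pi^{(n)}Q-Q\|_{\dot{H}^1(\heis)}<r^2$, so your $N$ depends only on $r$ (through the fixed profile $Q$); the paper instead keeps the term $\|\Pi^{(n)}(U_0^+)-U_0^+\|_{\dot{H}^1(\heis)}$, so its rank implicitly depends on $U_0$ as well, slightly at odds with the statement's $N(r)$ -- your route matches the statement literally. One small omission: you argue only under the general hypothesis $\|U_0-Q_\beta\|_{\dot{H}^1(\heis)}<\sqrt{1-\beta}\,r$, and do not address the case $U_0\in\dot{H}^1(\heis)\cap V_0^+$ with the weaker hypothesis $\|U_0-Q_\beta\|_{\dot{H}^1(\heis)}<r^2$, where $U_0^\gamma$ is taken constant equal to $U_0$; there $W^{\gamma,n}\equiv 0$ and your $V_0^+$ estimates apply verbatim, so this is a one-line addition (as it is in the paper).
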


\begin{proof}
We use the following convergence rate of $(Q_\beta)_\beta$ to $Q$ as $\beta$ tends to $1$ (proved in Appendix~\ref{appendix})~: 
\[
\|Q_\beta-Q\|_{\dot{H}^1(\heis)}=o(\sqrt{1-\beta}).
\]

If $U_0\in\dot{H}^1(\heis)\cap V_0^+$ and $\|U_0-Q_\beta\|_{\dot{H}^1(\heis)}<r^2$, we have chosen $U_0^\gamma$ constant equal to $U_0$ and it is enough to use that $\|\Pi^{(n)}(U_0)-U_0\|_{\dot{H}^1(\heis)}\to 0$ as $n\to+\infty$.

We now treat the case $\|U_0-Q_\beta\|_{\dot{H}^1(\heis)}<\sqrt{1-\beta}r$. By convergence of $Q_\beta$ to $Q$, there exists $\beta_*=\beta_*(r)\in(0,1)$ such that for all $\beta\in(\beta_*,1)$,
\[
\|Q_\beta-Q\|_{\dot{H}^1(\heis)}<\sqrt{1-\beta}r.
\]

We decompose
\[
Q_\beta=Q_\beta^++R_\beta
\]
and
\[
U_0=U_0^++W_0
\]
where $Q_\beta^+,U_0^+\in\dot{H}^1(\heis)\cap V_0^+$ and $R_\beta,W_0\in \bigoplus_{(k,\pm)\neq(0,+)}\dot{H}^1(\heis)\cap V_k^\pm$. In the same way, we decompose $U_0^\gamma$ as
\[
U_0^{\gamma}=(U_0^{\gamma})^++W_0^{\gamma}
\]
and $U_0^{\gamma,n}=\Pi^{(n)}(U_0^\gamma)$ as
\[
U_0^{\gamma,n}=(U_0^{\gamma,n})^++W_0^{\gamma,n},
\]
where $(U_0^{\gamma})^+,(U_0^{\gamma,n})^+\in\dot{H}^1(\heis)\cap V_0^+$ and $W_0^{\gamma},W_0^{\gamma,n}\in \bigoplus_{(k,\pm)\neq(0,+)}\dot{H}^1(\heis)\cap V_k^\pm$.

Since
\[
\|W_0-R_\beta\|_{\dot{H}^1(\heis)}
	\leq\|U_0-Q_\beta\|_{\dot{H}^1(\heis)}
	<\sqrt{1-\beta}r,
\]
then $W_0$ satisfies
\begin{align*}
\|W_0\|_{\dot{H}^1(\heis)}
	&\leq \|W_0-R_\beta\|_{\dot{H}^1(\heis)}+\|R_\beta\|_{\dot{H}^1(\heis)}\\
	&\leq 2\sqrt{1-\beta}r.
\end{align*}
Therefore, $W_0^\gamma=\frac{1-\gamma}{1-\beta}W_0$ satisfies
\[
\|W_0^{\gamma}\|_{\dot{H}^1(\heis)}\leq 2\frac{1-\gamma}{\sqrt{1-\beta}}r,
\]
which implies that for all $n\in\N$,
\[
\|W_0^{\gamma,n}\|_{\dot{H}^1(\heis)}\leq 2\frac{1-\gamma}{\sqrt{1-\beta}}r.
\]
In particular,
\begin{align*}
\left|(-\frac{\hlapl+\gamma D_s}{1-\gamma}W_0^{\gamma,n},W_0^{\gamma,n})_{L^2(\heis)}\right|+\left|(D_sW_0^{\gamma,n},W_0^{\gamma,n})_{L^2(\heis)}\right|
	&\leq 8\frac{1-\gamma}{1-\beta}r^2+4\frac{(1-\gamma)^2}{1-\beta}r^2\\
	&\leq 12r^2.
\end{align*}

Given the form of the energy
\begin{multline*}
\mathcal{E}_\gamma(U_0^{\gamma,n})
	=\half (-\frac{\hlapl+\gamma D_s}{1-\gamma}W_0^{\gamma,n},W_0^{\gamma,n})_{L^2(\heis)}+\half(D_s (U_0^{\gamma,n})^+,(U_0^{\gamma,n})^+)_{L^2(\heis)}
	\\-\frac{1}{4}\|(U_0^{\gamma,n})^++W_0^{\gamma,n}\|_{L^4(\heis)}^4,
\end{multline*}
it is now enough to estimate $\|(U_0^{\gamma,n})^+-Q\|_{\dot{H}^1(\heis)}$. But,
\begin{align*}
\|(U_0^{\gamma,n})^+-(U_0^{\gamma})^+\|_{\dot{H}^1(\heis)}
	&\leq\frac{1-\gamma}{1-\beta}\|\Pi^{(n)}((U_0)^+)-U_0^+\|_{\dot{H}^1(\heis)}
	+\frac{\gamma-\beta}{1-\beta}\|\Pi^{(n)}(Q)-Q\|_{\dot{H}^1(\heis)}\\
	&\leq\|\Pi^{(n)}((U_0)^+)-U_0^+\|_{\dot{H}^1(\heis)}+\|\Pi^{(n)}(Q)-Q\|_{\dot{H}^1(\heis)},
\end{align*}
which converges to zero as $n$ goes to $+\infty$ independently of $\gamma$. Moreover,
\begin{align*}
\|(U_0^\gamma)^+-Q\|_{\dot{H}^1(\heis)}
	&=\frac{1-\gamma}{1-\beta}\|U_0^+-Q\|_{\dot{H}^1(\heis)}\\
	&\leq \frac{1-\gamma}{1-\beta}(\|U_0-Q_\beta\|_{\dot{H}^1(\heis)}+\|W_0\|_{\dot{H}^1(\heis)}+\|Q_\beta-Q\|_{\dot{H}^1(\heis)})\\
	&\leq \frac{1-\gamma}{1-\beta}4\sqrt{1-\beta}r\\
	&\leq 4r^2
\end{align*}
for $\beta\geq\beta_*(r)$ large enough.

To conclude, there exists $C_0>0$, $r_0>0$ and $N\in\N$ such that for all $n\geq N$, $r\in(0,r_0)$ and $\gamma\in[\beta,1)$,
\[
\left|\mathcal{E}_{\gamma}(U_0^{\gamma,n})-\mathcal{E}(Q)\right|+\left|\mathcal{P}(U_0^{\gamma,n})-\mathcal{P}(Q)\right|
	<C_0r^2.
\]
\end{proof}

From now on, we assume that $\beta\geq\beta_*(r)$ and $n\geq N(r)$.

As in Proposition \ref{prop:well_posedness_limit} for the limiting equation, equation \eqref{eq:H_rescaled_n_beta} admits a unique global solution in $H^2_n(\heis):=\Pi^{(n)}(H^2(\heis))$. 

\begin{prop}
Let $U_0^{\gamma,n}\in H^2_{n}(\heis)$. Then there exists a unique $U_{\gamma,n}\in\classeC^\infty(\R,H^2_n(\heis))$ such that \eqref{eq:H_rescaled_n_beta} is satisfied in the distribution sense.
\end{prop}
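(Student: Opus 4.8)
The plan is to follow the route of Proposition \ref{prop:well_posedness_limit}, treating equation \eqref{eq:H_rescaled_n_beta} as an ODE in the Banach space $H^2_n(\heis)=\Pi^{(n)}(H^2(\heis))$: first local existence via Cauchy--Lipschitz theory, then a global extension driven by a conserved coercive quantity. The one genuine difference from the limiting problem lies in the choice of that quantity. In the limiting equation the solutions lived in $V_0^+$, where the momentum $\mathcal{P}$ coincides with the $\dot{H}^\half$-norm and is therefore coercive; here the solution spreads over all the spaces $V_k^\pm$, and since $D_s$ has symbol $\sigma$, the momentum $\mathcal{P}(V)=(D_sV,V)_{L^2(\heis)}$ is no longer sign-definite. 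I would instead rely on conservation of the mass $\|U_{\gamma,n}(t)\|_{L^2(\heis)}^2$, which is finite (unlike for the ground states themselves) precisely because the frequencies of $U_{\gamma,n}$ are confined to the band $\{0\leq k\leq n,\ \frac1n\leq|\sigma|\leq n\}$.

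For the local theory, write $L_\gamma:=\frac{\hlapl+\gamma D_s}{1-\gamma}$, so that the right-hand side of \eqref{eq:H_rescaled_n_beta} is $F(U)=\frac{1}{i}\bigl(L_\gamma U+\Pi^{(n)}(|U|^2U)\bigr)$. On $H^2_n(\heis)$ the operator $L_\gamma$ is a bounded Fourier multiplier, since its symbol $\frac{-(k+1)|\sigma|+\gamma\sigma}{1-\gamma}$ is bounded on the band for fixed $\gamma<1$; hence the linear part of $F$ is globally Lipschitz. For the nonlinear part I would use that $H^k(\heis)$ is an algebra for $k>2$ and that $\Pi^{(n)}$ maps $H^k(\heis)$ boundedly into $H^2_n(\heis)$, exactly as $P_0^n$ was handled in Proposition \ref{prop:well_posedness_limit}; together with the equivalence of Sobolev norms on the band, this makes $U\mapsto\Pi^{(n)}(|U|^2U)$ locally Lipschitz on $H^2_n(\heis)$, with a local existence time bounded below by a function of $\|U_0^{\gamma,n}\|_{H^2(\heis)}$ alone. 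Cauchy--Lipschitz then yields a unique maximal solution $U_{\gamma,n}\in\classeC^1(I,H^2_n(\heis))$.

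Next I would record the conservation laws for the projected flow. Pairing the equation with $U_{\gamma,n}$ in $L^2(\heis)$, and using that $\Pi^{(n)}$ is a self-adjoint projector with $\Pi^{(n)}U_{\gamma,n}=U_{\gamma,n}$, that $L_\gamma$ is self-adjoint, and that $\|U_{\gamma,n}\|_{L^4(\heis)}^4$ is real, every contribution to $\frac{\d}{\d t}\|U_{\gamma,n}\|_{L^2(\heis)}^2$ is purely imaginary times a real number, so the mass is conserved; the analogous computations give conservation of $\mathcal{E}_\gamma$ and $\mathcal{P}$. Only mass conservation is needed for globalization.

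Finally, the global extension rests on the equivalence of all Sobolev norms on $H^2_n(\heis)$: because $(k+1)|\sigma|$ ranges in the bounded interval $[\frac1n,(n+1)n]$ on the band, the $H^2$-weight $1+((k+1)|\sigma|)^2$ is bounded there, so $\|U\|_{H^2(\heis)}\leq C(n)\|U\|_{L^2(\heis)}$. Combined with conservation of the mass,
\[
\|U_{\gamma,n}(t)\|_{H^2(\heis)}\leq C(n)\,\|U_{\gamma,n}(t)\|_{L^2(\heis)}=C(n)\,\|U_0^{\gamma,n}\|_{L^2(\heis)}
\]
stays bounded on the whole existence interval. Since the local time of existence depends only on the $H^2$-norm, the solution cannot blow up in finite time and extends to $U_{\gamma,n}\in\classeC^1(\R,H^2_n(\heis))$; as $F$ is a polynomial (hence smooth) vector field on $H^2_n(\heis)$, differentiating the equation repeatedly upgrades this to $\classeC^\infty(\R,H^2_n(\heis))$, and uniqueness is inherited from Cauchy--Lipschitz. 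The main obstacle here is conceptual rather than computational: identifying a conserved quantity that is simultaneously finite and coercive on the approximating spaces, which is why one abandons the momentum used in the limiting problem in favour of the (here finite) mass.
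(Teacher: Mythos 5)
Your proof is correct, and it departs from the paper's own argument at exactly one point: the conserved quantity used to rule out finite-time blow-up. The local theory is the same (Cauchy--Lipschitz in $H^2_n(\heis)$, modeled on Proposition~\ref{prop:well_posedness_limit}). For the global extension, however, the paper keeps the momentum: it invokes conservation of $\mathcal{P}(V)=(D_sV,V)_{L^2(\heis)}$ together with the two-sided bound $(D_s V,V)_{L^2(\heis)}\leq \|V\|_{H^2(\heis)}^2\leq n(n^2+2n+2)(D_s V,V)_{L^2(\heis)}$, asserted for all $V\in H^2_n(\heis)$, whereas you use conservation of the mass together with $\|V\|_{H^2(\heis)}\leq C(n)\|V\|_{L^2(\heis)}$ on the frequency band. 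Your reason for discarding the momentum is sound and points at a real defect in the paper's formulation: $\Pi^{(n)}$ retains both signs of $\sigma$, so $H^2_n(\heis)$ contains components along the spaces $V_k^-$, where the symbol $\sigma$ of $D_s$ is negative; hence $\mathcal{P}$ is sign-indefinite on $H^2_n(\heis)$, and the upper bound $\|V\|_{H^2(\heis)}^2\leq n(n^2+2n+2)\,\mathcal{P}(V)$ fails for any nonzero $V\in H^2_n(\heis)\cap V_0^-$ (positive left-hand side, negative right-hand side). The paper's inequality is valid only on the positive-frequency part; replacing $D_s$ by $|D_s|$ would restore it but destroy conservation, since the cancellation $\int_{\heis}|U|^2\partial_s(|U|^2)=0$ underlying momentum conservation has no analogue for the nonlocal operator $|D_s|$. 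Your mass, by contrast, is conserved by exactly the self-adjointness/projection computation you give, is finite on $H^2_n(\heis)$ thanks to the cutoff $|\sigma|\geq 1/n$, and dominates the $H^2$-norm on the band, which is all that the blow-up alternative requires. The trade-off is uniformity in $n$: the momentum (where it is meaningful, e.g.\ after the component off $V_0^+$ has been separately controlled by the energy as in Lemma~\ref{lem:estimates_w_beta,n}) stays bounded independently of $n$ along the flow, which is what Section~\ref{subsection:limit_beta_n_fixed} later exploits, while your bound $C(n)\|U_0^{\gamma,n}\|_{L^2(\heis)}$ degenerates as $n\to\infty$; but the present proposition is stated for fixed $n$ and $\gamma$, so an $n$-dependent bound is exactly what is needed, and your argument is complete --- it repairs, rather than merely replaces, the paper's globalization step.
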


Here again, in order to show that the local maximal solutions are global, we use the conservation of the momentum 
\[
\mathcal{P}(V)=(D_s V,V)_{L^2(\heis)},
\]
and the following inequality valid for $V\in H^2_n(\heis)$ :
\[
(D_s V,V)_{L^2(\heis)}\leq \|V\|_{H^2(\heis)}^2\leq n(n^2+2n+2)(D_s V,V)_{L^2(\heis)}.
\]

\subsection{Limit \texorpdfstring{$\gamma\to 1$}{gamma to 1} for the \texorpdfstring{$n$}{n}-th partial sum}\label{subsection:limit_beta_n_fixed}

In this part, we use the conservation of energy and momentum to recover an upper bound on $d(U_{\gamma,n}(t),\M)$ for $\gamma\geq\beta^*(n,t)$ close to $1$. Then, we prove that the lower bound for $\gamma$ can be chosen independently of $n$ and $t$.

For $t\in\R$, we decompose $U_{\gamma,n}(t)$ as
\[
U_{\gamma,n}(t)=U_{\gamma,n}^+(t)+W_{\gamma,n}(t).
\]
We show that $U_{\gamma,n}^+(t)\in \dot{H}^1(\heis)\cap V_0^+$ is the main part for which we control $\delta(U_{\gamma,n}^+(t))$, and $W_{\gamma,n}(t)\in\bigoplus_{(k,\pm)\neq (0,+)}\dot{H}^1(\heis)\cap V_k^{\pm}$ is a remainder term which vanishes in the limit $\gamma\to 1$.

First, since $\mathcal{P}(U_{\gamma,n}(t))=(D_sU_{\gamma,n}(t),U_{\gamma,n}(t))_{L^2(\heis)}$ is conserved, bounded by $C_0r^2+\mathcal{P}(Q)$ for all $\gamma\in[\beta,1)$ and equivalent to $\|U_{\gamma,n}(t)\|_{\dot{H}^1(\heis)}^2$ in $H^2_n(\heis)$, there exists some constant $C(n)>0$ such that for all $t\in\R$ and $\gamma\in[\beta,1)$,
\[
\|U_{\gamma,n}(t)\|_{\dot{H}^1(\heis)}\leq C(n).
\]
But such a bound on $\|U_{\gamma,n}(t)\|_{\dot{H}^1(\heis)}$ and the conservation of energy imply that $W_{\gamma,n}(t)$ must vanish as $\gamma$ tends to $1$.

\begin{lem}\label{lem:estimates_w_beta,n}
For some constant $C_1>0$ the following holds. Assume that there exists $C>0$ (possibly depending on $n$), $t\in\R$ and $\gamma\in[\beta,1)$ such that $\|U_{\gamma,n}(t)\|_{\dot{H}^1(\heis)}\leq C$. Then
\[
\|W_{\gamma,n}(t)\|_{\dot{H}^1(\heis)}\leq C_1(1+C^2)\sqrt{1-\gamma}.
\]
\end{lem}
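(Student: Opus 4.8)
The plan is to read the bound on $W_{\gamma,n}(t)$ off the conservation of the energy $\mathcal{E}_\gamma$, exploiting the fact that the quadratic form $V\mapsto\big(-\tfrac{\hlapl+\gamma D_s}{1-\gamma}V,V\big)_{L^2(\heis)}$ becomes hugely coercive, with constant blowing up like $\tfrac{1}{1-\gamma}$, on the remainder space $\bigoplus_{(k,\pm)\neq(0,+)}\dot{H}^1(\heis)\cap V_k^\pm$, while it reduces to the plain $\dot{H}^1$ norm on $V_0^+$. Since the operator $A_\gamma:=-\tfrac{\hlapl+\gamma D_s}{1-\gamma}$ is a Fourier multiplier preserving each $V_k^\pm$ and these subspaces are mutually $L^2$-orthogonal, the form splits as a sum over $V_0^+$ and $W_{\gamma,n}$ with no cross terms (indeed $A_\gamma U_{\gamma,n}^+\in V_0^+\perp W_{\gamma,n}$ and $A_\gamma W_{\gamma,n}\perp V_0^+\ni U_{\gamma,n}^+$), so the $W_{\gamma,n}$-contribution can be isolated.

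First I compute the symbol of $A_\gamma$ on each component. From $\widehat{\hlapl h_{m,p}}=-(m+p+1)|\sigma|\widehat{h_{m,p}}$ and $\widehat{D_su}=\sigma\widehat u$, on $V_k^+$ (where $\sigma>0$) the multiplier is $\tfrac{(k+1-\gamma)|\sigma|}{1-\gamma}$ and on $V_k^-$ (where $\sigma<0$) it is $\tfrac{(k+1+\gamma)|\sigma|}{1-\gamma}$. On $V_0^+$ this is exactly $|\sigma|$, the $\dot H^1$-weight, so the form restricts to $\|U_{\gamma,n}^+\|_{\dot H^1}^2$. Comparing with the $\dot H^1$-weight $(k+1)|\sigma|$ on the remaining components and using $\int_{\R_\pm}|\sigma|\int_{\R^2}|\widehat{W_k^\pm}|^2=\tfrac1{k+1}\|W_k^\pm\|_{\dot H^1}^2$, the excess equals $\tfrac{k}{k+1}\tfrac{\gamma}{1-\gamma}\|W_k^+\|_{\dot H^1}^2$ on $V_k^+$ ($k\geq1$) and $\tfrac{k+2}{k+1}\tfrac{\gamma}{1-\gamma}\|W_k^-\|_{\dot H^1}^2$ on $V_k^-$. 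Both prefactors are $\geq\tfrac12$, so summing over all remainder components yields the key lower bound
\[
\Big(-\tfrac{\hlapl+\gamma D_s}{1-\gamma}W_{\gamma,n},W_{\gamma,n}\Big)_{L^2(\heis)}\geq \frac{\gamma}{2(1-\gamma)}\|W_{\gamma,n}\|_{\dot H^1(\heis)}^2.
\]
This spectral-gap computation is the heart of the argument and the step I expect to be the main obstacle to state cleanly; the rest is bookkeeping.

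Then I invoke energy conservation. Writing $\mathcal{E}_\gamma(U_{\gamma,n}(t))=\tfrac12(A_\gamma U_{\gamma,n}^+,U_{\gamma,n}^+)+\tfrac12(A_\gamma W_{\gamma,n},W_{\gamma,n})-\tfrac14\|U_{\gamma,n}(t)\|_{L^4}^4$ and discarding the nonnegative $V_0^+$-term, I get
\[
\frac{\gamma}{4(1-\gamma)}\|W_{\gamma,n}\|_{\dot H^1}^2\leq \tfrac12\Big(-\tfrac{\hlapl+\gamma D_s}{1-\gamma}W_{\gamma,n},W_{\gamma,n}\Big)\leq \mathcal{E}_\gamma(U_{\gamma,n}(t))+\tfrac14\|U_{\gamma,n}(t)\|_{L^4}^4.
\]
By the previous lemma and conservation of energy, $\mathcal{E}_\gamma(U_{\gamma,n}(t))=\mathcal{E}_\gamma(U_0^{\gamma,n})\leq \mathcal{E}(Q)+C_0r^2$ is bounded by an absolute constant, and by the Sobolev embedding $\dot H^1(\heis)\hookrightarrow L^4(\heis)$ together with the hypothesis, $\|U_{\gamma,n}(t)\|_{L^4}^4\lesssim\|U_{\gamma,n}(t)\|_{\dot H^1}^4\leq C^4$. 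Hence $\tfrac{\gamma}{1-\gamma}\|W_{\gamma,n}\|_{\dot H^1}^2\lesssim 1+C^4$; since $\gamma$ stays bounded away from $0$ in the regime of interest ($\gamma\geq\beta\geq\beta_*(r)$ close to $1$, so $\tfrac1\gamma\leq2$), this gives $\|W_{\gamma,n}\|_{\dot H^1}^2\lesssim(1-\gamma)(1+C^4)$. Taking square roots and using $\sqrt{1+C^4}\leq 1+C^2$ produces $\|W_{\gamma,n}(t)\|_{\dot{H}^1(\heis)}\leq C_1(1+C^2)\sqrt{1-\gamma}$ for a universal constant $C_1$, as claimed.
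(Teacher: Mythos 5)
Your proof is correct and follows essentially the same route as the paper: conservation of $\mathcal{E}_\gamma$, discarding the nonnegative $V_0^+$ contribution, the Sobolev bound on the $L^4$ term, and coercivity of order $\frac{1}{1-\gamma}$ of the quadratic form on $\bigoplus_{(k,\pm)\neq(0,+)}\dot{H}^1(\heis)\cap V_k^\pm$. The only difference is cosmetic: you establish that coercivity by an explicit symbol computation (obtaining the constant $\frac{\gamma}{2(1-\gamma)}$, hence a harmless dependence on a lower bound for $\gamma$), whereas the paper simply invokes the norm equivalence $\half\|w\|_{\dot{H}^1(\heis)}^2\leq(-(\hlapl+\gamma D_s)w,w)_{L^2(\heis)}\leq 2\|w\|_{\dot{H}^1(\heis)}^2$ on that subspace.
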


\begin{proof}
We use the conservation of energy
\begin{multline*}
\mathcal{E}_{\gamma}(U_{\gamma,n}(t))
	=\half(-\frac{\hlapl+\gamma D_s}{1-\gamma}W_{\gamma,n}(t),W_{\gamma,n}(t))_{L^2(\heis)}+\half(D_sU_{\gamma,n}^+(t),U_{\gamma,n}^+(t))_{L^2(\heis)}\\-\frac{1}{4}\|U_{\gamma,n}(t)\|_{L^4(\heis)}^4
\end{multline*}
and the fact that
\[
|\mathcal{E}_\gamma(U_0^{\gamma,n})-\mathcal{E}(Q)|<C_0r^2.
\]
Thanks to the embedding $\dot{H}^1(\heis)\hookrightarrow L^4(\heis)$, we know that
\[
\|U_{\gamma,n}(t)\|_{L^4(\heis)}^4\leq K^4\|U_{\gamma,n}(t)\|_{\dot{H}^1(\heis)}^4=K^4C^4.
\]
Moreover, recall the equivalence of norms
\[
\half \|w\|_{\dot{H}^1(\heis)}^2
	\leq (-(\hlapl+\gamma D_s)w,w)_{L^2(\heis)}
	\leq 2 \|w\|_{\dot{H}^1(\heis)}^2,
	\quad w\in\bigoplus_{(k,\pm)\neq (0,+)}\dot{H}^1(\heis)\cap V_k^{\pm}.
\]
We conclude that
\[
\frac{1}{4(1-\gamma)}\|W_{\gamma,n}(t)\|_{\dot{H}^1(\heis)}^2
	\leq \mathcal{E}(Q)+C_0r^2+\frac{1}{4}K^4C^4.
\]
\end{proof}

This Lemma implies that for all $t\in\R$ and $\gamma\in[\beta,1)$,
\[
\|W_{\gamma,n}(t)\|_{\dot{H}^1(\heis)}\leq C_1(1+C(n)^2)\sqrt{1-\gamma},
\] which vanishes as $\gamma$ tends to $1$.

Fix $t\in\R$. Since $\gamma\in[\beta,1)\mapsto W_{\gamma,n}(t)$ is continuous, we can define $\beta_0(n,t)\geq \beta$ as the minimal element in $[\beta,1)$ such that for all $\gamma\in[\beta_0(n,t),1)$,
\[
\|W_{\gamma,n}(t)\|_{\dot{H}^1(\heis)}\leq r^2.
\]

\begin{lem}
For $t\in\R$, $\gamma\in[\beta,1)\mapsto W_{\gamma,n}(t)\in\dot{H}^1(\heis)$ is continuous.
\end{lem}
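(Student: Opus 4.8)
The plan is to deduce everything from continuous dependence of the flow on the parameter $\gamma$. Since the decomposition $\dot{H}^1(\heis)=\bigoplus_{k,\pm}\dot{H}^1(\heis)\cap V_k^\pm$ is orthogonal in $\dot{H}^1(\heis)$, the map onto the remainder space is the bounded projector $\id-\Pi_0^+$, and by definition $W_{\gamma,n}(t)=(\id-\Pi_0^+)U_{\gamma,n}(t)$. Hence it suffices to prove that $\gamma\in[\beta,1)\mapsto U_{\gamma,n}(t)\in\dot{H}^1(\heis)$ is continuous. Because $U_{\gamma,n}(t)\in\classeC^\infty(\R,H^2_n(\heis))$ and all $\dot{H}^s$-norms are equivalent on $H^2_n(\heis)$ (with $n$-dependent constants), I would work throughout in the Banach space $H^2_n(\heis)$, where \eqref{eq:H_rescaled_n_beta} is an ODE $\partial_t U=-iA_\gamma U-iN(U)$ with $A_\gamma:=\frac{\hlapl+\gamma D_s}{1-\gamma}$ a bounded Fourier multiplier and $N(U):=\Pi^{(n)}(|U|^2U)$ independent of $\gamma$, and whose initial datum $U_0^{\gamma,n}=\Pi^{(n)}(U_0^\gamma)$ is affine, hence Lipschitz, in $\gamma$.

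The core is then a standard Gronwall argument for parameter dependence. First I would fix $\gamma_0\in[\beta,1)$ and a small $\eta>0$ so that $I:=[\gamma_0-\eta,\gamma_0+\eta]\cap[\beta,1)$ is contained in some $[\beta,\gamma_1]$ with $\gamma_1<1$. Using the conservation of momentum and the equivalence of norms on $H^2_n(\heis)$ recalled above, one has a uniform a priori bound $\sup_{\gamma\in I}\sup_{s\in[0,t]}\|U_{\gamma,n}(s)\|_{H^2_n(\heis)}\leq R$ with $R=R(n)$, so all solutions stay in a fixed ball $B_R\subset H^2_n(\heis)$. On $B_R$ the nonlinearity $N$ is Lipschitz (constant $L$), while on $I$ the multiplier satisfies $\|A_\gamma\|_{\mathcal{L}(H^2_n(\heis))}\leq M$ and $\|A_\gamma-A_{\gamma'}\|_{\mathcal{L}(H^2_n(\heis))}\leq \Lambda|\gamma-\gamma'|$, all constants depending only on $n$ and $\gamma_1$. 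Setting $D(s):=\|U_{\gamma,n}(s)-U_{\gamma',n}(s)\|_{H^2_n(\heis)}$ for $\gamma,\gamma'\in I$, I would pass to the Duhamel form of \eqref{eq:H_rescaled_n_beta}, split $A_\gamma U_{\gamma,n}-A_{\gamma'}U_{\gamma',n}=A_\gamma(U_{\gamma,n}-U_{\gamma',n})+(A_\gamma-A_{\gamma'})U_{\gamma',n}$, and use $\|U_0^{\gamma,n}-U_0^{\gamma',n}\|_{H^2_n(\heis)}\leq C|\gamma-\gamma'|$ to obtain
\[
D(s)\leq C|\gamma-\gamma'|+\Lambda R|\gamma-\gamma'|\,|t|+(M+L)\int_0^s D(\tau)\d\tau .
\]
Gronwall's lemma then gives $D(t)\leq (C+\Lambda R|t|)\,e^{(M+L)|t|}\,|\gamma-\gamma'|\to 0$ as $\gamma'\to\gamma_0$, which is exactly continuity of $\gamma\mapsto U_{\gamma,n}(t)$ in $H^2_n(\heis)$, hence in $\dot{H}^1(\heis)$, hence of $W_{\gamma,n}(t)$.

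The point to watch, and the only genuine subtlety, is that $\|A_\gamma\|_{\mathcal{L}(H^2_n(\heis))}$ blows up like $(1-\gamma)^{-1}$ as $\gamma\to 1$; since continuity is claimed only on $[\beta,1)$, restricting to the compact subinterval $I$ around each $\gamma_0$ keeps $M$ and $\Lambda$ finite, and all constants are allowed to depend on $n$, which is fixed here. The cubic term requires no more than $L^\infty$ control on $B_R$: on the finite-band space $H^2_n(\heis)$ all norms (including $L^\infty$, via Sobolev embedding) are equivalent, so $|U|^2U-|V|^2V$ is bounded pointwise by $(|U|^2+|V|^2)|U-V|$, giving an $L^2$-Lipschitz estimate that $\Pi^{(n)}$ promotes back to an $H^2_n(\heis)$-Lipschitz estimate. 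I expect this nonlinear Lipschitz bound and the uniform momentum-based a priori bound to be the two places demanding the most care, but both are routine on the regularized finite-band space.
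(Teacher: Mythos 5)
Your proof is correct and follows essentially the same route as the paper: reduce to continuity of $\gamma\mapsto U_{\gamma,n}(t)$ (the projector onto the remainder space being bounded), use the momentum-based a priori bound together with the equivalence of norms on the band-limited space $H^2_n(\heis)$, split the linear term as $A_\gamma(U_{\gamma,n}-U_{\gamma',n})+(A_\gamma-A_{\gamma'})U_{\gamma',n}$, exploit the Lipschitz character of the truncated nonlinearity, and close with Gronwall. The only immaterial differences are that you run Gronwall on the integral (Duhamel) form while the paper differentiates $f(t)=\|U_{\gamma_1,n}(t)-U_{\gamma_2,n}(t)\|_{\dot{H}^1(\heis)}^2$, and that you make explicit the restriction to compact subintervals of $[\beta,1)$ which the paper leaves implicit in its $\gamma$-dependent constants.
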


\begin{proof}
Fix $t\in\R$. We show that $\gamma\in[\beta,1)\mapsto U_{\gamma,n}(t)\in\dot{H}^1(\heis)$ is continuous. Let $\gamma_1,\gamma_2\in[\beta,1)$ and set $R:=U_{\gamma_1,n}-U_{\gamma_2,n}$. Then $R$ is a solution to
\begin{multline*}
i\partial_t R-\frac{\hlapl+\gamma_1 D_s}{1-\gamma_1}R-\left(\frac{\hlapl+\gamma_1 D_s}{1-\gamma_1}-\frac{\hlapl+\gamma_2 D_s}{1-\gamma_2}\right)U_{\gamma_2,n}
	\\=\Pi^{(n)}(|U_{\gamma_1,n}|^2U_{\gamma_1,n})-\Pi^{(n)}(|U_{\gamma_2,n}|^2U_{\gamma_2,n}).
\end{multline*}
We bound $\|\partial_tR(t)\|_{\dot{H}^1(\heis)}$, which is equivalent to controlling $\|\partial_tR(t)\|_{\dot{H}^{-1}(\heis)}$ since $\partial_tR(t)\in\Pi^{(n)}(\dot{H}^1(\heis))$. We treat each term in the equation separately.

First,
\begin{align*}
\|-\frac{\hlapl+\gamma_1 D_s}{1-\gamma_1}R(t)\|_{\dot{H}^{-1}(\heis)}
	&\leq \frac{2}{1-\gamma_1}\|-\hlapl R(t)\|_{\dot{H}^{-1}(\heis)}\\
	&\leq \frac{2}{1-\gamma_1}\| R(t)\|_{\dot{H}^1(\heis)}.
\end{align*}
Then,
\begin{align*}
\|\Big(\frac{\hlapl+\gamma_1 D_s}{1-\gamma_1}-
	&\frac{\hlapl+\gamma_2 D_s}{1-\gamma_2}\Big)U_{\gamma_2,n}(t)\|_{\dot{H}^{-1}(\heis)}\\
	&\leq \frac{|\gamma_2-\gamma_1|}{(1-\gamma_1)(1-\gamma_2)}\left(\|-\hlapl U_{\gamma_2,n}(t)\|_{\dot{H}^{-1}(\heis)}+\|D_sU_{\gamma_2,n}(t)\|_{\dot{H}^{-1}(\heis)}\right)\\
	&\leq \frac{|\gamma_2-\gamma_1|}{(1-\gamma_1)(1-\gamma_2)}2C(n).
\end{align*}
Finally, there exists $C'(n)>0$ such that
\[
\|\Pi^{(n)}(|U_{\gamma_1,n}|^2U_{\gamma_1,n}(t))-\Pi^{(n)}(|U_{\gamma_2,n}|^2U_{\gamma_2,n}(t))\|_{\dot{H}^{-1}(\heis)}
	\leq C'(n)\|R(t)\|_{\dot{H}^1(\heis)}.
\]

We define $f(t):=\|R(t)\|_{\dot{H}^1(\heis)}^2$ for $t\in\R$. Then there exists some constant $C''(n)>0$ such that
\begin{align*}
f'(t)
	&\leq 2\|\partial_tR(t)\|_{\dot{H}^1(\heis)}\|R(t)\|_{\dot{H}^1(\heis)}\\
	&\leq C''(n)\|\partial_tR(t)\|_{\dot{H}^{-1}(\heis)}\|R(t)\|_{\dot{H}^1(\heis)}\\
	&\leq C''(n)\left(\Big(\frac{2}{1-\gamma_1}+C'(n)\Big)\| R(t)\|_{\dot{H}^1(\heis)}^2+\frac{|\gamma_2-\gamma_1|}{(1-\gamma_1)(1-\gamma_2)}2C(n)\|R(t)\|_{\dot{H}^1(\heis)}\right)\\
	&\leq C''(n)\left(\Big(\frac{2}{1-\gamma_1}+C'(n)+\frac{|\gamma_2-\gamma_1|}{(1-\gamma_1)(1-\gamma_2)}C(n)\Big)\| R(t)\|_{\dot{H}^1(\heis)}^2+\frac{|\gamma_2-\gamma_1|}{(1-\gamma_1)(1-\gamma_2)}C(n)\right).
\end{align*}
Therefore, $f(t)$ satisfies a Gronwall-type inequality
\[
f(t)'\leq K_1(n)f(t)+K_2(n)\frac{|\gamma_2-\gamma_1|}{(1-\gamma_1)(1-\gamma_2)}
\]
with
\[
K_1(n)= C''(n)\left(\frac{2}{1-\gamma_1}+C'(n)+\frac{|\gamma_2-\gamma_1|}{(1-\gamma_1)(1-\gamma_2)}C(n)\right),
\]
and
\[
K_2(n)=C''(n)C(n).
\]
This inequality implies that for all $t\in\R$,
\[
f(t)\leq f(0)e^{K_1(n)|t|}+\frac{K_2(n)}{K_1(n)}\frac{|\gamma_2-\gamma_1|}{(1-\gamma_1)(1-\gamma_2)}(e^{K_1(n)|t|}-1)
\]
with
\[
f(0)=\|\Pi^{(n)}(U_0^{\gamma_1}-U_0^{\gamma_2})\|_{\dot{H}^1(\heis)}^2.
\]
Fix $t\in\R$ and $\gamma_1\in[\beta,1)$, we see that if $\gamma_2$ tends to $\gamma_1$, then $f(t)$ tends to $0$.
\end{proof}

Assume now that $\beta<\beta_0(n,t)=:\beta_0$. We find an upper bound for $\beta_0$ in $[\beta,1)$ independent on $n$ and $t$. The continuity of $\gamma\mapsto W_{\gamma,n}(t)$ implies that
\[
\|W_{\beta_0,n}(t)\|_{\dot{H}^1(\heis)}= r^2.
\]
The component of $U_{\beta,n}(t)$ along $V_0^+$ is bounded by
\begin{align*}
\|U_{\beta,n}^+(t)\|_{\dot{H}^1(\heis)}^2
	&\leq \mathcal{P}(U_{\beta,n}(t))\\
	&\leq \mathcal{P}(Q)+C_0r^2,
\end{align*}
therefore
\[
\|U_{\beta_0,n}(t)\|_{\dot{H}^1(\heis)}
	\leq r^2+(\mathcal{P}(Q)+C_0r^2)^\half,
\]
where $C:=r^2+(\mathcal{P}(Q)+C_0r^2)^\half$ does not depend on $n$ or $t$ anymore. Lemma \ref{lem:estimates_w_beta,n} now implies
\[
\|W_{\beta_0,n}(t)\|_{\dot{H}^1(\heis)}
	\leq C_1(1+C^2)\sqrt{1-\beta_0}.
\]
We conclude that
\[
r^2\leq C_1(1+C^2)\sqrt{1-\beta_0},
\]
which means
\[
\beta_0\leq 1-\left(\frac{r^2}{C_1(1+C^2)}\right)^2=:\beta^*(r),
\]
therefore $\beta<\beta^*(r)$. Taking the converse, we have proven that if $\beta\geq\beta^*(r)$, then $\beta_0=\beta$.

\begin{lem}\label{lem:estimates_u_beta,n}
There exists some constant $\beta^*(r)\in(0,1)$ such that if $\beta\geq\beta^*(r)$, then the solution $U_{\beta,n}$ to equation \eqref{eq:H_rescaled_n}
\begin{equation*}
\begin{cases}
i\partial_t U_{\beta,n} -\frac{\hlapl+\beta D_s}{1-\beta} U_{\beta,n}=\Pi^{(n)}(|U_{\beta,n}|^2U_{\beta,n})
\\
U_{\beta,n}(t=0)=U_0^{n}=\Pi^{(n)}(U_0)
\end{cases}
\end{equation*}
satisfies for all $t\in\R$
\[
\|W_{\beta,n}(t)\|_{\dot{H}^1(\heis)}
	\leq r^2
\]
and
\[
\|U_{\beta,n}^+(t)\|_{\dot{H}^1(\heis)}
	\leq (\mathcal{P}(Q)+C_0r^2)^\half.
\]
\end{lem}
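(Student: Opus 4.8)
The plan is to establish the two bounds simultaneously for all $n\geq N(r)$ and all $t\in\R$ by proving that the threshold $\beta_0(n,t)$ collapses to $\beta$ once $\beta$ is large enough, and crucially that the lower bound on $\beta$ needed for this can be taken independent of $n$ and $t$. I would argue by contraposition: suppose that for some $n$ and $t$ one has $\beta<\beta_0(n,t)=:\beta_0$. Since $\gamma\mapsto W_{\gamma,n}(t)$ is continuous and $\beta_0$ is by definition the \emph{smallest} speed above which $\|W_{\gamma,n}(t)\|_{\dot{H}^1(\heis)}\leq r^2$ holds throughout $[\beta_0,1)$, minimality forces the boundary equality
\[
\|W_{\beta_0,n}(t)\|_{\dot{H}^1(\heis)}=r^2.
\]

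The first key step is to bound $\|U_{\beta_0,n}(t)\|_{\dot{H}^1(\heis)}$ by a constant free of $n$ and $t$. The $V_0^+$ part is controlled by conservation of momentum and the initial-data estimate, $\|U_{\beta_0,n}^+(t)\|_{\dot{H}^1(\heis)}^2\leq\mathcal{P}(U_{\beta_0,n}(t))=\mathcal{P}(U_0^{\beta_0,n})\leq\mathcal{P}(Q)+C_0r^2$, while the remainder satisfies the boundary equality just obtained. Summing the two norms gives
\[
\|U_{\beta_0,n}(t)\|_{\dot{H}^1(\heis)}\leq r^2+(\mathcal{P}(Q)+C_0r^2)^{\half}=:C,
\]
and the point is that $C$ depends only on $r$ and the fixed quantity $\mathcal{P}(Q)$.

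Next I would feed this uniform bound into Lemma~\ref{lem:estimates_w_beta,n}, applied at the speed $\gamma=\beta_0$, to get $\|W_{\beta_0,n}(t)\|_{\dot{H}^1(\heis)}\leq C_1(1+C^2)\sqrt{1-\beta_0}$. Comparing with the boundary equality $\|W_{\beta_0,n}(t)\|_{\dot{H}^1(\heis)}=r^2$ yields $r^2\leq C_1(1+C^2)\sqrt{1-\beta_0}$, hence
\[
\beta_0\leq 1-\Big(\frac{r^2}{C_1(1+C^2)}\Big)^2=:\beta^*(r).
\]
Thus the hypothesis $\beta<\beta_0(n,t)$ implies $\beta<\beta^*(r)$; taking the contrapositive, $\beta\geq\beta^*(r)$ forces $\beta_0(n,t)=\beta$ for every $n$ and $t$ at once. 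The conclusions of the lemma then follow immediately: the definition of the threshold (evaluated at $\gamma=\beta$) gives $\|W_{\beta,n}(t)\|_{\dot{H}^1(\heis)}\leq r^2$, and momentum conservation gives $\|U_{\beta,n}^+(t)\|_{\dot{H}^1(\heis)}^2\leq\mathcal{P}(U_{\beta,n}(t))\leq\mathcal{P}(Q)+C_0r^2$.

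The main obstacle is exactly the uniformity in $n$ and $t$. A priori Lemma~\ref{lem:estimates_w_beta,n} only furnishes an $n$-dependent $\dot{H}^1$ bound $C(n)$, which would make $\beta^*$ depend on $n$ and obstruct the later passage $n\to+\infty$. The resolution is to evaluate everything at the critical speed $\beta_0$: there the equality $\|W_{\beta_0,n}(t)\|_{\dot{H}^1(\heis)}=r^2$ together with the momentum bound on $U^+$ replaces $C(n)$ by the fixed constant $C$, so that the resulting $\beta^*(r)$ is genuinely independent of $n$ and $t$.
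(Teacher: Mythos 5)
Your proof is correct and is essentially identical to the paper's own argument: the paper likewise supposes $\beta<\beta_0(n,t)$, uses continuity and minimality to get the boundary equality $\|W_{\beta_0,n}(t)\|_{\dot{H}^1(\heis)}=r^2$, combines it with the momentum bound on $U^+$ to obtain the $n$- and $t$-independent constant $C=r^2+(\mathcal{P}(Q)+C_0r^2)^{\half}$, and then applies Lemma~\ref{lem:estimates_w_beta,n} at $\gamma=\beta_0$ to force $\beta_0\leq\beta^*(r)$, concluding by contraposition. You have also correctly identified the key point, namely that evaluating at the critical speed is what removes the $n$-dependence of the $\dot{H}^1$ bound.
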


We now show that $U_{\beta,n}(t)$ is close to the orbit $\M$ of $Q$ for $t\in\R$ and $\beta\geq\beta^*(r)$.

\begin{prop}
There exist $r_0>0$ and $c_0>0$ such that if $r<r_0$ and $\beta\geq\beta^*(r)$, then for all $t\in\R$,
\[
d(U_{\beta,n}(t),\M)< c_0r.
\]
\end{prop}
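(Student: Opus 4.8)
The plan is to exploit the orthogonal splitting $U_{\beta,n}(t)=U_{\beta,n}^+(t)+W_{\beta,n}(t)$ and reduce everything, via Proposition \ref{prop:estimates_stability}, to the two conserved quantities. Since each symmetry $T_X$ preserves every subspace $V_k^\pm$ and acts isometrically on $\dot{H}^1(\heis)$, and since $Q\in V_0^+$, one has for every $X$ that $\|T_XU_{\beta,n}(t)-Q\|_{\dot{H}^1(\heis)}^2=\|T_XU_{\beta,n}^+(t)-Q\|_{\dot{H}^1(\heis)}^2+\|W_{\beta,n}(t)\|_{\dot{H}^1(\heis)}^2$, whence
\[
d(U_{\beta,n}(t),\M)^2=d(U_{\beta,n}^+(t),\M)^2+\|W_{\beta,n}(t)\|_{\dot{H}^1(\heis)}^2.
\]
By Lemma \ref{lem:estimates_u_beta,n} the last term is at most $r^4\le r^2$, so it remains to bound $d(U_{\beta,n}^+(t),\M)$. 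As $U_{\beta,n}^+(t)\in\dot{H}^1(\heis)\cap V_0^+$, I would apply Proposition \ref{prop:estimates_stability}; thus the whole statement reduces to proving $\delta(U_{\beta,n}^+(t))\lesssim r^2$, which for $r<r_0$ small also guarantees $\delta(U_{\beta,n}^+(t))\le\delta_0$.

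For the momentum part of $\delta$, I would use that $\mathcal{P}(U_{\beta,n}(t))=(D_sU_{\beta,n}(t),U_{\beta,n}(t))_{L^2(\heis)}$ is conserved and stays within $C_0r^2$ of $\mathcal{P}(Q)$ (control of the initial data). Writing $\mathcal{P}(U_{\beta,n})=\mathcal{P}(U_{\beta,n}^+)+(D_sW_{\beta,n},W_{\beta,n})_{L^2(\heis)}$ and noting that $|(D_sw,w)_{L^2(\heis)}|\le\|w\|_{\dot{H}^1(\heis)}^2$ on $\bigoplus_{(k,\pm)\ne(0,+)}\dot{H}^1(\heis)\cap V_k^\pm$, the remainder is $O(r^4)$, so $|\mathcal{P}(U_{\beta,n}^+(t))-\mathcal{P}(Q)|\lesssim r^2$; recall $\mathcal{P}(U^+)=\|U^+\|_{\dot{H}^1(\heis)}^2$ on $V_0^+$.

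The $L^4$ part is the crux. From the energy identity in the proof of Lemma \ref{lem:estimates_w_beta,n},
\[
\mathcal{E}_\beta(U_{\beta,n}(t))=K_W(t)+\frac12\|U_{\beta,n}^+(t)\|_{\dot{H}^1(\heis)}^2-\frac14\|U_{\beta,n}(t)\|_{L^4(\heis)}^4,\qquad K_W(t):=\frac12\Big(-\frac{\hlapl+\beta D_s}{1-\beta}W_{\beta,n}(t),W_{\beta,n}(t)\Big)_{L^2(\heis)}\ge0,
\]
together with conservation of $\mathcal{E}_\beta$, the closeness of $\mathcal{E}_\beta(U_0^{\beta,n})$ to its limiting value, the Pohozaev-type relation $\mathcal{P}(Q)=\|Q\|_{L^4(\heis)}^4$ (obtained by pairing $D_sQ=\Pi_0^+(|Q|^2Q)$ with $Q$), and $\|U_{\beta,n}\|_{L^4(\heis)}^4=\|U_{\beta,n}^+\|_{L^4(\heis)}^4+O(r^2)$ (from $\|W_{\beta,n}\|_{L^4(\heis)}\lesssim\|W_{\beta,n}\|_{\dot{H}^1(\heis)}\le r^2$ and the pointwise bound $\big||a+b|^4-|a|^4\big|\lesssim|b|(|a|^3+|b|^3)$), I would rearrange to $\|U_{\beta,n}^+(t)\|_{L^4(\heis)}^4-\|Q\|_{L^4(\heis)}^4=4K_W(t)+O(r^2)$. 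Since $K_W(t)\ge0$, this already yields the lower bound $\|U_{\beta,n}^+(t)\|_{L^4(\heis)}^4\ge\|Q\|_{L^4(\heis)}^4-O(r^2)$. For the matching upper bound I would invoke the sharp inequality $\|u\|_{L^4(\heis)}^4\le\|u\|_{\dot{H}^1(\heis)}^4/\|Q\|_{L^4(\heis)}^4$ on $\dot{H}^1(\heis)\cap V_0^+$, saturated exactly on $\M$ (the variational characterization of $Q$ in \cite{schroheis}); combined with the momentum bound it gives $\|U_{\beta,n}^+(t)\|_{L^4(\heis)}^4\le\|Q\|_{L^4(\heis)}^4+O(r^2)$. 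The two bounds pin down $\big|\|U_{\beta,n}^+(t)\|_{L^4(\heis)}^4-\|Q\|_{L^4(\heis)}^4\big|\lesssim r^2$ and simultaneously force $K_W(t)\lesssim r^2$. Hence $\delta(U_{\beta,n}^+(t))\lesssim r^2$, and Proposition \ref{prop:estimates_stability} gives $d(U_{\beta,n}^+(t),\M)^2\le C\delta(U_{\beta,n}^+(t))\lesssim r^2$; reassembling with the orthogonal splitting yields $d(U_{\beta,n}(t),\M)\le c_0r$.

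I expect the $L^4$ upper bound to be the main obstacle. The non-sharp embedding used elsewhere only gives $\|U_{\beta,n}^+\|_{L^4(\heis)}^4\le K^4\|U_{\beta,n}^+\|_{\dot{H}^1(\heis)}^4$, which is merely bounded and not close to $\|Q\|_{L^4(\heis)}^4$; because only energy and momentum are conserved (the mass being infinite), one genuinely needs the sharp ground-state inequality to close the estimate. It is precisely this sharpness that turns the mere boundedness of the remainder's kinetic energy $K_W$ into the smallness $K_W\lesssim r^2$ required to control $\delta(U_{\beta,n}^+(t))$.
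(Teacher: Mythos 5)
Your proposal is correct and follows essentially the same route as the paper: decompose $U_{\beta,n}=U_{\beta,n}^++W_{\beta,n}$, control $\mathcal{P}(U_{\beta,n}^+)$ by momentum conservation, get the lower $L^4$ bound from energy conservation plus positivity of the $W$-kinetic term, get the upper bound from the sharp variational inequality $\|u\|_{L^4}^4\leq (D_su,u)_{L^2}^2/I_+$ with $I_+=(D_sQ,Q)_{L^2}=\|Q\|_{L^4}^4$, and conclude via Proposition \ref{prop:estimates_stability}. Your only (harmless) refinements are stating the distance splitting $d(U_{\beta,n},\M)^2=d(U_{\beta,n}^+,\M)^2+\|W_{\beta,n}\|_{\dot{H}^1(\heis)}^2$ as an exact identity where the paper settles for the triangle inequality, and extracting the by-product $K_W\lesssim r^2$, which is not needed.
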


\begin{proof}
Fix $t\in\R$. It only remains to estimate $\delta(U_{\beta,n}^+(t))$ and apply Proposition \ref{prop:estimates_stability}.

On the one hand, since $(D_sW_{\beta,n}(t),W_{\beta,n}(t))_{L^2(\heis)}\leq \|W_{\beta,n}(t)\|_{\dot{H}^1(\heis)}^2\leq r^2$, the conservation of momentum leads to
\begin{equation}\label{ineq:delta_ubeta1}
|(D_sU_{\beta,n}^+(t),U_{\beta,n}^+(t))_{L^2(\heis)}-(D_sQ,Q)_{L^2(\heis)}|\leq (C_0+1)r^2.
\end{equation}

On the other hand, we estimate $|\|U_{\beta,n}^+(t)\|_{L^4(\heis)}^4-\|Q\|_{L^4(\heis)}^4|$ via the conservation of energy. We know that
\[
|\|U_{\beta,n}(t)\|_{L^4(\heis)}^4-\|U_{\beta,n}^+(t)\|_{L^4(\heis)}^4|
	\leq \|W_{\beta,n}(t)\|_{L^4(\heis)}(\|U_{\beta,n}(t)\|_{L^4(\heis)}+\|U_{\beta,n}^+(t)\|_{L^4(\heis)})^3.
\]
Since $\|U_{\beta,n}(t)\|_{\dot{H}^1(\heis)}$ is bounded thanks to Lemma \ref{lem:estimates_u_beta,n}, there exists $C_1>0$ such that
\begin{equation}\label{ineq:delta_ubeta2}
|\|U_{\beta,n}(t)\|_{L^4(\heis)}^4-\|U_{\beta,n}^+(t)\|_{L^4(\heis)}^4|
	\leq C_1\|W_{\beta,n}(t)\|_{\dot{H}^1(\heis)}
	\leq C_1r^2.
\end{equation}
Therefore, from \eqref{ineq:delta_ubeta1} and \eqref{ineq:delta_ubeta2}, we get
\begin{align*}
\mathcal{E}_\beta&(U_{\beta,n}(t))\\
	&= \half(-\frac{\hlapl+\beta D_s}{1-\beta}W_{\beta,n}(t),W_{\beta,n}(t))_{L^2(\heis)}+\half(D_sU_{\beta,n}^+(t),U_{\beta,n}^+(t))_{L^2(\heis)}-\frac{1}{4}\|U_{\beta,n}(t)\|_{L^4(\heis)}^4\\
	&\geq \half(D_sQ,Q)_{L^2(\heis)}-\frac{1}{4}\|U_{\beta,n}^+(t)\|_{L^4(\heis)}^4-(\frac{C_0+1}{2}+\frac{C_1}{4})r^2.
\end{align*}
However,
\[
\mathcal{E}_\beta(U_{\beta,n}(t))
	=\mathcal{E}_\beta(U_0^n)
	\leq \half(D_sQ,Q)_{L^2(\heis)}-\frac{1}{4}\|Q\|_{L^4(\heis)}^4+C_0r^2,
\]
therefore
\[
\frac{1}{4}\|U_{\beta,n}^+(t)\|_{L^4(\heis)}^4
	\geq \frac{1}{4}\|Q\|_{L^4(\heis)}^4- (\frac{3C_0+1}{2}+\frac{C_1}{4})r^2.
\]

For the reverse inequality, recall the link between $Q$ and the best constant in the embedding $\dot{H}^1(\heis)\cap V_0^+\hookrightarrow L^4(\heis)$ : if
\[
\inf_{u\in \dot{H}^1(\heis)\cap V_0^+}\frac{(D_su,u)_{L^2(\heis)}^2}{\|u\|_{L^4(\heis)}^4}=I_+,
\]
then
\[
(D_sQ,Q)_{L^2(\heis)}=\|Q\|_{L^4(\heis)}^4=I_+=\pi^2.
\]
This leads to
\begin{align*}
\|U_{\beta,n}^+(t)\|_{L^4(\heis)}^4
	&\leq \frac{1}{I_+}(D_sU_{\beta,n}^+(t),U_{\beta,n}^+(t))_{L^2(\heis)}^2\\
	&\leq \frac{1}{I_+}((D_sQ,Q)_{L^2(\heis)}+(C_0+1)r^2)^2\\
	&\leq  \frac{1}{I_+}(I_++(C_0+1)r^2)^2\\
	&\leq \|Q\|_{L^4(\heis)}^4+\frac{1}{I_+}(2I_+(C_0+1)+(C_0+1)^2r^2)r^2.
\end{align*}

In the end, we have proven that if $r\leq 1$, then for some constant $C_2>0$,
\[
\delta(U_{\beta,n}^+(t))\leq C_2r^2,
\]
and Proposition \ref{prop:estimates_stability} immediately implies that for $r$ small enough,
\[
d(U_{\beta,n}^+(t),\M)^2\leq CC_2r^2.
\]
Since $\|W_{\beta,n}(t)\|_{\dot{H}^1(\heis)}\leq r^2$, we get the Proposition.
\end{proof}

\subsection{Weak convergence}\label{subsection:limit_n_beta_fixed}

We now know that if $\beta\geq\beta_*(r)$, then for all $n\geq N$ and $t\in\R$,
\begin{equation}\label{ineq:distance_ubetan,Q}
d(U_{\beta,n}(t),\M)< c_0r.
\end{equation}
The aim is now to pass to the limit $n\to+\infty$ in equation \eqref{eq:H_rescaled_n}
\begin{equation*}
\begin{cases}
i\partial_t U_{\beta,n} -\frac{\hlapl+\beta D_s}{1-\beta} U_{\beta,n}=\Pi^{(n)}(|U_{\beta,n}|^2U_{\beta,n})
\\
U_{\beta,n}(t=0)=U_0^{n}=\Pi^{(n)}(U_0)
\end{cases}
\end{equation*}
and in inequality \eqref{ineq:distance_ubetan,Q} in order to get a weak solution $U_\beta$ to equation \eqref{eq:H_rescaled}
\begin{equation*}
\begin{cases}
i\partial_t U_{\beta} -\frac{\hlapl+\beta D_s}{1-\beta} U_{\beta}=|U_{\beta}|^2U_{\beta}
\\
U_\beta(t=0)=U_0
\end{cases}
\end{equation*}
which satisfies
\[
d(U_{\beta}(t),\M)\leq c_0r,
	\quad t\in\R.
\]

The method is identical to parts \ref{subsection:limiting_weak_convergence} and \ref{subsection:limiting_modulation} for the limiting equation : we use a uniform bound on $\|\partial_tU_{\beta,n}(t)\|_{\dot{H}^{-1}(\heis)}$. Thanks to Ascoli's theorem, the sequence $(U_{\beta,n})_{n\in\N}$ admits a weak limit $U_\beta$ which is a weak solution to \eqref{eq:H_rescaled}. Then, we construct bounded modulation parameters $X_{\beta,n}(t)$ in order to control the distance between $U_\beta$ and $\M$.

\begin{lem}
There exists $c_\beta>0$ such that for all $n\geq N$, $t\in\R$ and $X\in\R\times\T\times\R_+^*$,
\[
\|\partial_t(T_XU_{\beta,n})(t)\|_{\dot{H}^{-1}(\heis)}\leq c_\beta.
\]
\end{lem}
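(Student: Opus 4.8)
The plan is to push $T_X$ through the time derivative and the equation, and to reduce the whole estimate to the two dilation-invariant conserved quantities — the momentum $\mathcal{P}$ and the $L^4$-energy — which are exactly those bounded uniformly in $n$ and $t$ by Lemma~\ref{lem:estimates_u_beta,n}.

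Since $T_X$ acts only on $(x,y,s)$ and not on $t$, it commutes with $\partial_t$, so $\partial_t(T_XU_{\beta,n})(t)=T_X(\partial_tU_{\beta,n})(t)$. Writing $T_X=\e^{i\theta}S_\alpha\tau_{s_0}$ (dilation, $s$-translation, phase) and using $S_\alpha\hlapl=\alpha^{-2}\hlapl S_\alpha$, $S_\alpha D_s=\alpha^{-2}D_sS_\alpha$, that $\tau_{s_0}$ and the phase commute with $\hlapl$ and $D_s$, and the homogeneity $T_X(|w|^2w)=\alpha^{-2}|T_Xw|^2T_Xw$ of the cubic term, I would first record the equation satisfied by $V:=T_XU_{\beta,n}$:
\[
i\partial_tV=\alpha^{-2}\Big(\frac{\hlapl+\beta D_s}{1-\beta}V+\Pi^{(n),X}(|V|^2V)\Big),\qquad \Pi^{(n),X}:=T_X\Pi^{(n)}T_X^{-1}.
\]
In other words $T_X$ maps the flow to itself up to the time reparametrization $t\mapsto\alpha^{-2}t$. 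The estimate then splits into a linear and a nonlinear contribution, both to be measured in $\dot{H}^{-1}(\heis)$.

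For the linear term I would use the joint eigenbasis of $\hlapl$ and $D_s$: on the mode indexed by $(k,\sigma)$ the operator $\frac{\hlapl+\beta D_s}{1-\beta}$ acts by a symbol of modulus at most $\frac{2(k+1)|\sigma|}{1-\beta}$, while the $\dot{H}^{-1}$-weight is $((k+1)|\sigma|)^{-1}$, whence $\|\frac{\hlapl+\beta D_s}{1-\beta}V\|_{\dot{H}^{-1}(\heis)}\le\frac{2}{1-\beta}\|V\|_{\dot{H}^1(\heis)}$. For the nonlinear term, $\Pi^{(n),X}$ is the $T_X$-conjugate of an $L^2$-orthogonal projector and is therefore a contraction in $\dot{H}^{-1}(\heis)$, and the dual Sobolev embedding $L^{4/3}(\heis)\hookrightarrow\dot{H}^{-1}(\heis)$ gives $\|\Pi^{(n),X}(|V|^2V)\|_{\dot{H}^{-1}(\heis)}\le C\|V\|_{L^4(\heis)}^3$. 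The point producing uniformity in $X$ is that both controlling quantities are \emph{exactly} $T_X$-invariant: $\|V\|_{\dot{H}^1(\heis)}=\|U_{\beta,n}\|_{\dot{H}^1(\heis)}$ and $\|V\|_{L^4(\heis)}=\|U_{\beta,n}\|_{L^4(\heis)}$, and by Lemma~\ref{lem:estimates_u_beta,n} together with $\dot{H}^1(\heis)\hookrightarrow L^4(\heis)$ both are bounded by a constant depending only on $\beta$ (through $\mathcal{P}(Q)+C_0r^2$), uniformly in $n$ and $t$.

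The step I expect to be the main obstacle is the single place where the symmetry is not invisible to the norm, namely the prefactor $\alpha^{-2}$: unlike $\|\cdot\|_{\dot{H}^1}$, $\|\cdot\|_{L^4}$, $\mathcal{P}$ and $\mathcal{E}_\beta$, the negative-order norm $\dot{H}^{-1}(\heis)$ is not dilation-invariant, so the two bounds above carry a factor $\alpha^{-2}$. For $\alpha\ge1$ this factor is $\le1$ and the uniform bound is immediate. For $\alpha<1$, where $V$ concentrates at low frequencies, I would exploit that the inequality is only ever needed along the orbit — that is, for $X$ for which $T_XU_{\beta,n}$ is $\dot{H}^1$-close to $Q$; by the Heisenberg counterpart of the gap estimate Lemma~\ref{lem:gap_TXQ_Q}, such $X$ have $\alpha$ confined to a fixed subinterval of $\R_+^*$ bounded away from $0$, on which $\alpha^{-2}$ is controlled and absorbed into $c_\beta$. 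Making this confinement rigorous — and verifying that the resulting $c_\beta$ depends on neither $n$, $t$, nor the admissible $X$ — is the heart of the argument; the genuinely new feature compared with the limiting equation of Section~\ref{section:stability_Q} is the presence of the linear term, whose $\dot{H}^{-1}$-bound via the symbol calculus above is what keeps the constant proportional to $\frac{1}{1-\beta}$.
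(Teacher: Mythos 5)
Your estimate chain is essentially the paper's: set $V:=T_XU_{\beta,n}$, bound the linear term by the symbol estimate $\|\frac{\hlapl+\beta D_s}{1-\beta}V\|_{\dot{H}^{-1}(\heis)}\leq\frac{2}{1-\beta}\|V\|_{\dot{H}^{1}(\heis)}$, bound the cubic term using that the conjugated frequency cutoff $T_X\Pi^{(n)}T_X^{-1}$ has norm $1$ on $\dot{H}^{-1}(\heis)$ together with $L^{4/3}(\heis)\hookrightarrow\dot{H}^{-1}(\heis)$ and $\dot{H}^1(\heis)\hookrightarrow L^4(\heis)$, and close with the $T_X$-invariance of $\|V\|_{\dot{H}^1(\heis)}$ and $\|V\|_{L^4(\heis)}$ and Lemma~\ref{lem:estimates_u_beta,n}; the paper's proof does exactly this, identifying the conjugated projector $\widetilde{\Pi}^{(n)}$ explicitly via the cutoff $\frac{\alpha^2}{n}\leq|\sigma|\leq\alpha^2 n$. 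The one point where you part ways with the paper is the prefactor $\alpha^{-2}$, and there your computation is the correct one: from $\hlapl T_X=\alpha^2T_X\hlapl$, $D_sT_X=\alpha^2T_XD_s$ and $|T_Xw|^2T_Xw=\alpha^2T_X(|w|^2w)$ one gets $i\partial_tV=\alpha^{-2}\big(\frac{\hlapl+\beta D_s}{1-\beta}V+\widetilde{\Pi}^{(n)}(|V|^2V)\big)$, whereas the paper's proof writes this equation \emph{without} the factor $\alpha^{-2}$. Equivalently, since $\partial_t(T_XU_{\beta,n})=T_X(\partial_tU_{\beta,n})$ and the exact scaling identity $\|T_Xw\|_{\dot{H}^{-1}(\heis)}=\alpha^{-2}\|w\|_{\dot{H}^{-1}(\heis)}$ holds (the $\dot{H}^{-1}$ norm, unlike $\dot{H}^1$ and $L^4$, is not dilation invariant), the honest conclusion of the common computation is the bound $\alpha^{-2}\big(\frac{2C_1}{1-\beta}+K_2C_1^3\big)$, not a constant $c_\beta$.

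The step you defer is therefore a genuine gap, and the closure you sketch cannot work as stated, because the statement with uniformity over all $X\in\R\times\T\times\R_+^*$ is false whenever $\partial_tU_{\beta,n}(t)\neq 0$: by the scaling identity, $\sup_{\alpha>0}\|\partial_t(T_XU_{\beta,n})(t)\|_{\dot{H}^{-1}(\heis)}=\sup_{\alpha>0}\alpha^{-2}\|\partial_tU_{\beta,n}(t)\|_{\dot{H}^{-1}(\heis)}=+\infty$. Restricting to $X$ near the orbit via the analogue of Lemma~\ref{lem:gap_TXQ_Q} does not restore uniformity: that lemma only controls the consecutive gaps $|X_{\beta,n}^{k-1}(X_{\beta,n}^{k})^{-1}|\leq c_1$, so $|\log\alpha|$ may grow linearly in $|t|$, and the modulation parameters are bounded only on bounded time intervals; moreover the constant $c_\beta$ is used precisely to fix the time step $t_1$ \emph{before} any bound on $X_{\beta,n}(t)$ is available, so conditioning $c_\beta$ on such a bound is circular. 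A confinement of the admissible dilations coming from the frequency localization $\frac{1}{n}\leq|\sigma|\leq n$ of $U_{\beta,n}$ gives only $n$-dependent bounds, which ruins the passage $n\to\infty$; note also that in the application the lemma is invoked for $T_{X^{-1}}U_{\beta,n}$, whose dilation parameter is $\alpha^{-1}$, so the dangerous regime appears on both sides. In short: you have correctly isolated a scaling factor that the paper's own proof drops silently — the bound holds uniformly only for $\alpha$ in a fixed compact subset of $\R_+^*$, or in the weaker form $\|\partial_t(T_XU_{\beta,n})(t)\|_{\dot{H}^{-1}(\heis)}\leq\alpha^{-2}c_\beta$ — but your proposal defers exactly this point rather than resolving it, so it does not prove the statement as written (and, at this step, neither does the paper's argument).
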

\begin{proof}
We know from Lemma \ref{lem:estimates_u_beta,n} that there exists some constant $C_1>0$ such that for all $n\geq N$ and $t\in\R$,
\[
\|U_{\beta,n}(t)\|_{\dot{H}^1(\heis)}\leq C_1.
\]

Set $V_{\beta,n}:=T_XU_{\beta,n}$. By symmetry invariance, $V_{\beta,n}$ satisfies that for all $n\geq N$ and $t\in\R$,
\[
\|V_{\beta,n}(t)\|_{\dot{H}^1(\heis)}\leq C_1.
\]
Moreover, $V_{\beta,n}$ is a solution to some equation
\begin{equation*}
\begin{cases}
i\partial_t V_{\beta,n} -\frac{\hlapl+\beta D_s}{1-\beta} V_{\beta,n}=\widetilde{\Pi}^{(n)}(|V_{\beta,n}|^2V_{\beta,n})
\\
V_{\beta,n}(t=0)=V_0^n=\widetilde{\Pi}^{(n)}(U_0)
\end{cases}.
\end{equation*}
The projector $\widetilde{\Pi}^{(n)}$ is defined as follows. Write $X=(s,\theta,\alpha)$. For $u\in\dot{H}^{-1}(\heis)$, we decompose
\[
u=\sum_{k\in\N}\sum_\pm\Pi_{k,\pm}(u)
\]
with $\Pi_{k,\pm}(u)\in \dot{H}^{-1}(\heis)\cap V_k^\pm$ for $(k,\pm)\in\N\times\{\pm\}$. Then
\[
\widehat{\widetilde{\Pi}^{(n)}(u)}(x,y,\sigma)=\sum_{k=0}^n\sum_\pm\widehat{\Pi_{k,\pm}(u)}(x,y,\sigma)\un_{\frac{\alpha^2}{n}\leq|\sigma|\leq\alpha^2n}.
\]

Thanks to the fact that $\widetilde{\Pi}^{(n)}$ is a projector and the embeddings $L^{\frac{4}{3}}(\heis)\hookrightarrow \dot{H}^{-1}(\heis)$ and $\dot{H}^1(\heis)\hookrightarrow L^4(\heis)$,
\begin{align*}
\|\partial_tV_{\beta,n}(t)\|_{\dot{H}^{-1}(\heis)}
	&\leq \frac{1}{1-\beta}\|-(\hlapl+\beta D_s)V_{\beta,n}\|_{\dot{H}^{-1}(\heis)}+\|\widetilde{\Pi}^{(n)}(|V_{\beta,n}|^2V_{\beta,n})\|_{\dot{H}^{-1}(\heis)}\\
	&\leq \frac{2}{1-\beta}\|-\hlapl V_{\beta,n}\|_{\dot{H}^{-1}(\heis)}+\||V_{\beta,n}|^2V_{\beta,n}\|_{\dot{H}^{-1}(\heis)}\\
	&\leq \frac{2}{1-\beta}\|V_{\beta,n}\|_{\dot{H}^1(\heis)}+K_1\||V_{\beta,n}|^2V_{\beta,n}\|_{L^{\frac{4}{3}}(\heis)}\\
	&\leq \frac{2}{1-\beta}\|V_{\beta,n}\|_{\dot{H}^1(\heis)}+K_2\|V_{\beta,n}\|_{\dot{H}^1(\heis)}^3\\
	&\leq \frac{2C_1}{1-\beta}+K_2C_1^3.
\end{align*}
\end{proof}

We deduce the weak convergence of $(U_{\beta,n})_{n\in\N}$, for which the proof is identical to part \ref{subsection:limiting_weak_convergence} and based on Ascoli's theorem.

\begin{lem}
Up to a subsequence, $(U_{\beta,n})_n$ converges weakly to a solution $U_\beta\in\classeC(\R,\dot{H}^1(\heis))$ (with the weak topology) to \eqref{eq:H_rescaled}
\begin{equation*}
\begin{cases}
i\partial_t U_{\beta} -\frac{\hlapl+\beta D_s}{1-\beta} U_{\beta}=|U_{\beta}|^2U_{\beta}
\\
U_\beta(t=0)=U_0
\end{cases}.
\end{equation*}
\end{lem}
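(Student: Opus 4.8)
The plan is to transpose \emph{verbatim} the weak-convergence argument of Section~\ref{subsection:limiting_weak_convergence}, with the pair of spaces $(\dot{H}^{\half}(\C_+),\dot{H}^{-\half}(\C_+))$ replaced by $(\dot{H}^1(\heis),\dot{H}^{-1}(\heis))$. The two uniform estimates that drive the argument are already available: Lemma~\ref{lem:estimates_u_beta,n} provides a bound $\|U_{\beta,n}(t)\|_{\dot{H}^1(\heis)}\leq C_1$ valid for all $n\geq N$ and $t\in\R$, while the preceding Lemma, specialized to $X=(0,0,1)$ (so that $T_X=\id$), gives $\|\partial_t U_{\beta,n}(t)\|_{\dot{H}^{-1}(\heis)}\leq c_\beta$ uniformly in $n$ and $t$.

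With these two bounds in hand, I would first fix a countable family $(\varphi_k)_k$ in $\dot{H}^{-1}(\heis)\cap\dot{H}^1(\heis)$ that is dense in $\dot{H}^{-1}(\heis)$ for the $\dot{H}^{-1}$-norm; such a family exists because $\dot{H}^{-1}(\heis)$ is separable and, by cutting frequencies in the Hermite decomposition of Section~\ref{notation}, the intersection $\dot{H}^{-1}(\heis)\cap\dot{H}^1(\heis)$ is dense in $\dot{H}^{-1}(\heis)$. For each $k$ the scalar maps $\ell_n(\cdot,\varphi_k):t\mapsto (U_{\beta,n}(t),\varphi_k)$ are equibounded, since $|\ell_n(t,\varphi_k)|\leq\|U_{\beta,n}(t)\|_{\dot{H}^1(\heis)}\|\varphi_k\|_{\dot{H}^{-1}(\heis)}$, and equicontinuous, since $|\partial_t\ell_n(t,\varphi_k)|\leq\|\partial_t U_{\beta,n}(t)\|_{\dot{H}^{-1}(\heis)}\|\varphi_k\|_{\dot{H}^1(\heis)}\leq c_\beta\|\varphi_k\|_{\dot{H}^1(\heis)}$. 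Ascoli's theorem then yields, for each $k$, a convergent subsequence in $\classeC([-T,T],\C)$; a first diagonal extraction handles all indices $k$ at once and a second diagonal extraction over a sequence of times $T_m\to+\infty$ produces limits $\ell(\cdot,\varphi_k)\in\classeC(\R,\C)$. By density $\ell$ extends to a bounded element of $\classeC(\R,(\dot{H}^{-1}(\heis))^*)$, which by duality is represented by some $U_\beta\in\classeC(\R,\dot{H}^1(\heis))$ endowed with the weak topology, so that along this subsequence $(U_{\beta,n})_n$ converges weakly to $U_\beta$ in $\classeC([-T,T],\dot{H}^1(\heis))$ for every $T>0$.

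It then remains to pass to the limit in the weak formulation of~\eqref{eq:H_rescaled_n}. The linear contribution is immediate: $-(\hlapl+\beta D_s)/(1-\beta)$ is bounded from $\dot{H}^1(\heis)$ to $\dot{H}^{-1}(\heis)$, so testing against a fixed element of $\dot{H}^1(\heis)$ and using the established weak convergence passes the limit through it, and the initial datum converges because $\Pi^{(n)}(U_0)\to U_0$ in $\dot{H}^1(\heis)$. The delicate point, and the one I expect to be the main obstacle, is the cubic term $\Pi^{(n)}(|U_{\beta,n}|^2U_{\beta,n})$: weak $\dot{H}^1$-convergence alone does not control a nonlinearity. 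I would split it as $\Pi^{(n)}\to\id$, which holds strongly on the relevant spaces by the frequency-cutting definition of $\Pi^{(n)}$, together with the convergence of $|U_{\beta,n}|^2U_{\beta,n}$, which I would obtain by upgrading weak to strong local convergence. Concretely, the uniform bounds $U_{\beta,n}\in L^\infty(\R,\dot{H}^1(\heis))$ and $\partial_tU_{\beta,n}\in L^\infty(\R,\dot{H}^{-1}(\heis))$, combined with the local compactness of the subcritical embedding $\dot{H}^1(\heis)\hookrightarrow L^3_{loc}(\heis)$ (the critical exponent being $4$ for the homogeneous dimension of $\heis$), feed an Aubin--Lions compactness argument yielding strong convergence of $U_{\beta,n}$ to $U_\beta$ in $L^3_{loc}(\R\times\heis)$ along the subsequence. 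This strong convergence gives $|U_{\beta,n}|^2U_{\beta,n}\to|U_\beta|^2U_\beta$ in $L^1_{loc}(\R\times\heis)$, which is enough to pass to the limit against any test function and conclude that $U_\beta$ solves~\eqref{eq:H_rescaled} in the distribution sense.
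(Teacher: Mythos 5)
Your proposal is correct and follows essentially the same route as the paper, whose proof of this lemma consists of declaring it ``identical to part~\ref{subsection:limiting_weak_convergence}'': the uniform bounds $\|U_{\beta,n}(t)\|_{\dot{H}^1(\heis)}\leq C_1$ and $\|\partial_t U_{\beta,n}(t)\|_{\dot{H}^{-1}(\heis)}\leq c_\beta$, Ascoli with a countable dense family, double diagonal extraction, and duality, exactly as you wrote. The only difference is that you make explicit how to pass to the limit in the cubic term via local compactness (Aubin--Lions / Rellich, using that $L^3_{loc}$ is subcritical for $\dot{H}^1(\heis)$ since the homogeneous dimension is $4$, together with the strong convergence $\Pi^{(n)}\to\id$ and the uniform $L^{4/3}$ bound on $|U_{\beta,n}|^2U_{\beta,n}$), a step which the paper, both here and in Section~\ref{subsection:limiting_weak_convergence}, leaves implicit behind the words ``passing to the limit''; this supplement is sound and genuinely completes the argument rather than deviating from it.
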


Moreover, one can see that for all $X\in\R\times\T\times\R_+^*$ and $t_0,t\in\R$, setting $V_{\beta,n}:=T_{X^{-1}}U_{\beta,n}$,
\begin{align*}
\left|\frac{\d}{\d t}\|U_{\beta,n}(t)-T_XQ\|_{\dot{H}^1(\heis)}^2\right|
	&=\left|\frac{\d}{\d t}\|V_{\beta,n}(t)-Q\|_{\dot{H}^1(\heis)}^2\right|\\
	&=\left|2(\partial_t V_{\beta,n}(t),D_sQ)_{L^2(\heis)}\right|\\
	&\leq 2\|\partial_t V_{\beta,n}(t)\|_{\dot{H}^{-1}(\heis)}\|D_sQ\|_{\dot{H}^1(\heis)},
\end{align*}
which implies that for some constant $c_\beta>0$, for all $t_0,t\in\R$,
\[
\|U_{\beta,n}(t)-T_XQ\|_{\dot{H}^1(\heis)}^2
	\leq \|U_{\beta,n}(t_0)-T_XQ\|_{\dot{H}^1(\heis)}^2
+c_\beta|t-t_0|.
\]

Set $\varepsilon\in(0,1)$ and define $t_1:=\frac{(1+\varepsilon)^2-1}{c_\beta}(c_0r)^2$.
Note that $t_1$ may depend on $\beta$, but this is not important because in this part the varying parameter is $n$ whereas $\beta$ is fixed. The construction of $X_{\beta,n}$ as a piecewise constant functional is now the same as for the limiting system. For $k\in\Z$, $X_{\beta,n}$ is constant on $[kt_1,(k+1)t_1[$, equal to some $X_{\beta,n}^k\in\R\times\T\times\R_+^*$ to be chosen.  We first set $X_{\beta,n}^{-1}=X_{\beta,n}^0=(0,0,1)$. Then, at time $t_k=kt_1$, $k\geq 1$, we use the fact that $d(U_{\beta,n}(t_k),\M)< c_0r$ and choose $X_{\beta,n}^k$ such that $\|U_{\beta,n}(t_k)-T_{X_{\beta,n}^k}Q\|_{\dot{H}^1(\heis)}<c_0r$. By definition of $t_1$, for all $k\geq 0$ ad $t\in[t_k,t_k+t_1]$, $\|U_{\beta,n}(t)-T_{X_{\beta,n}^k}Q\|_{\dot{H}^1(\heis)}\leq (1+\varepsilon)c_0r$. We do a similar construction for negative times. The map $X_{\beta,n}$ satisfies
\begin{equation}\label{ineq:modulation_ubetan}
\|U_{\beta,n}(t)-T_{X_{\beta,n}(t)}Q\|_{\dot{H}^1(\heis)}\leq (1+\varepsilon)c_0r,
	\quad  t\in\R.
\end{equation}

It remains to show that $X_{\beta,n}$ is bounded independently of $n$ on bounded intervals. In order to do so, it is enough to control the gap between $X_{\beta,n}^{k-1}$ and $X_{\beta,n}^k$. By construction, at time $t_k$,
\[
\|U_{\beta,n}(t_k)-T_{X_{\beta,n}^{k-1}}Q\|_{\dot{H}^1(\heis)}\leq (1+\varepsilon)c_0r
\]
and
\[
\|U_{\beta,n}(t_k)-T_{X_{\beta,n}^k}Q\|_{\dot{H}^1(\heis)} < c_0r,
\]
therefore
\[
\|T_{X_n^{k-1}}Q-T_{X_n^k}Q\|_{\dot{H}^1(\heis)}\leq (2+\varepsilon)c_0r.
\]
Using Lemma \ref{lem:gap_TXQ_Q}, we conclude that if $r\leq r_0$ is small enough (for example if $3c_0r_0\leq \sqrt{\pi}r_1$), then
\[
|X_n^{k-1}(X_n^k)^{-1}|\leq c_1.
\]

Now, for fixed $t\in\R$, the sequence $(X_{\beta,n}(t))_{n\in\N}$ is bounded, therefore up to extraction, this sequence converges to some $X_\beta(t)\in\R\times\T\times\R_+^*$, and passing to the weak limit in \eqref{ineq:modulation_ubetan},
\[
\|U_\beta(t)-T_{X_\beta(t)}Q\|_{\dot{H}^1(\heis)}\leq(1+\varepsilon)c_0r.
\]

\section{Appendix : Rate of convergence of \texorpdfstring{$Q_\beta$}{Q_beta} to \texorpdfstring{$Q$}{Q}}\label{appendix}

In order to conclude the proof of Theorem \ref{thm:stability_Qbeta_precise}, it only remains to make precise
\[
\delta_\beta(Q)=|\mathcal{E}_\beta(Q_\beta)-\mathcal{E}(Q)|+|\mathcal{P}(Q_\beta)-\mathcal{P}(Q)|,
\]
which vanishes as $\beta$ tends to $1$.

Decompose
\[
Q_\beta=Q_\beta^++R_\beta,
\]
where $Q_\beta^+\in \dot{H}^1(\heis)\cap V_0^+$ and $R_\beta\in\bigoplus_{(k,\pm)\neq(0,+)}\dot{H}^1(\heis)\cap V_k^\pm$.
We improve the bound from \cite{schroheis}
\[
\delta(Q_\beta^+)+\|R_\beta\|_{\dot{H}^1(\heis)}=\gdO((1-\beta)^\half).
\]

\begin{prop}
Let $\varepsilon>0$. Then as $\beta$ tends to $1$,
\[
\delta(Q_\beta^+)+\|R_\beta\|_{\dot{H}^1(\heis)}=\gdO((1-\beta)^{2-\varepsilon}),
\]
which implies that
\[
\|Q_\beta-Q\|_{\dot{H}^1(\heis)}=\gdO((1-\beta)^{1-\frac{\varepsilon}{2}}).
\]
\end{prop}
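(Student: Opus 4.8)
The plan is to treat separately the component of $Q_\beta$ on $V_0^+$ and on its orthogonal complement. Write $Q_\beta=Q_\beta^++R_\beta$ and project the stationary equation $-(\hlapl+\beta D_s)Q_\beta=(1-\beta)|Q_\beta|^2Q_\beta$ onto the two summands. On the complement this reads $-(\hlapl+\beta D_s)R_\beta=(1-\beta)(\id-\Pi_0^+)(|Q_\beta|^2Q_\beta)$. Pairing with $R_\beta$, using the coercivity $\tfrac12\|w\|_{\dot H^1(\heis)}^2\le(-(\hlapl+\beta D_s)w,w)_{L^2(\heis)}$ valid off $V_0^+$, together with $L^{4/3}(\heis)\hookrightarrow\dot H^{-1}(\heis)$, the $L^p$–boundedness of the projectors and $\dot H^1(\heis)\hookrightarrow L^4(\heis)$, I would obtain $\|R_\beta\|_{\dot H^1(\heis)}\le C(1-\beta)\|Q_\beta\|_{L^4(\heis)}^3=\gdO(1-\beta)$. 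This already upgrades the exponent $\tfrac12$ of \cite{schroheis} on the remainder and shows that $R_\beta$ enters every later estimate harmlessly.

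For the principal component I would use that $Q_\beta^+$ almost solves the limiting equation. Projecting the equation onto $V_0^+$, where $-\tfrac{\hlapl+\beta D_s}{1-\beta}$ restricts to $D_s$, gives $D_s Q_\beta^+=\Pi_0^+(|Q_\beta|^2Q_\beta)$, to be compared with $D_s Q=\Pi_0^+(|Q|^2Q)$. Let $X$ realize $d(Q_\beta^+,\M)$ and set $\rho:=T_XQ_\beta^+-Q$, so that $\rho$ is $\dot H^1$–orthogonal to the tangent directions of $\M$. Subtracting the two equations and isolating the linear part of the cubic term, I arrive at $\mathcal{L}\rho=\mathcal S$, where $\mathcal{L}\rho:=D_s\rho-\Pi_0^+(2|Q|^2\rho+Q^2\bar\rho)$ is the linearized operator of the limiting equation at $Q$, and the source $\mathcal S$ gathers the $R_\beta$–dependent terms of $\Pi_0^+(|Q_\beta|^2Q_\beta)$ and the quadratic part of the nonlinearity; by Hölder's inequality and $\dot H^1(\heis)\hookrightarrow L^4(\heis)$ it satisfies $\|\mathcal S\|_{\dot H^{-1}(\heis)}=\gdO(1-\beta)+\gdO(\|\rho\|_{\dot H^1(\heis)}^2)$.

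The crux, which I expect to be the main obstacle, is the quantitative inversion of $\mathcal{L}$. Its kernel is precisely the finite–dimensional tangent space to $\M$, and the invertibility of $\mathcal{L}$ on the orthogonal complement — the very fact underlying Proposition \ref{prop:estimates_stability}, established in \cite{schroheis} — yields $\|\rho\|_{\dot H^1(\heis)}\lesssim\|\mathcal{L}\rho\|_{\dot H^{-1}(\heis)}$ once the symmetry directions have been modulated out. Feeding in the a priori smallness $\|\rho\|_{\dot H^1(\heis)}\to0$ coming from the bound $\gdO((1-\beta)^{1/2})$ of \cite{schroheis} to absorb the quadratic remainder, this closes into $\|\rho\|_{\dot H^1(\heis)}=\gdO((1-\beta)^{1-\varepsilon/2})$, the loss of a power $\varepsilon$ being what I would expect from a borderline (logarithmic) estimate: either in the inversion of $\mathcal{L}$ near the $\sigma=0$ endpoint of the weighted Bergman scale, or from the infinite mass of $Q$ making one of the cubic pairings in $\mathcal S$ only borderline integrable.

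Finally I would combine the two estimates. The gap $\delta(Q_\beta^+)$ is controlled through the smoothness and symmetry–invariance of $\mathcal P$ and $\|\cdot\|_{L^4(\heis)}^4$, and together with $\|R_\beta\|_{\dot H^1(\heis)}=\gdO(1-\beta)$ this gives the asserted control of $\delta(Q_\beta^+)+\|R_\beta\|_{\dot H^1(\heis)}$; realizing the distance by the modulation $X$ then yields
\[
\|Q_\beta-Q\|_{\dot H^1(\heis)}\le\|\rho\|_{\dot H^1(\heis)}+\|R_\beta\|_{\dot H^1(\heis)}=\gdO\big((1-\beta)^{1-\frac{\varepsilon}{2}}\big).
\]
I stress that the genuine gain over the naive route comes from passing through $\mathcal{L}$ directly: invoking only Proposition \ref{prop:estimates_stability} would cost a square root and merely reproduce the exponent $\tfrac12$, whereas the linearized estimate bounds $\|Q_\beta^+-Q\|$ at the same rate as the source term.
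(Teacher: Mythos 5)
Your approach has genuine gaps, and both halves of the claimed estimate fall short of the stated exponent. For the remainder term, pairing the projected equation with $R_\beta$ and bounding $\|(\id-\Pi_0^+)(|Q_\beta|^2Q_\beta)\|_{\dot{H}^{-1}(\heis)}\lesssim\|Q_\beta\|_{L^4(\heis)}^3$ gives only $\|R_\beta\|_{\dot{H}^1(\heis)}=\gdO(1-\beta)$, which is strictly weaker than the asserted $\gdO((1-\beta)^{2-\varepsilon})$. The missing idea is a cancellation: since $|Q|^2Q=D_sQ\in\dot{H}^{-1}(\heis)\cap V_0^+$, one has $(\id-\Pi_{0,+})(|Q|^2Q)=0$, so one may insert it for free and obtain $\|R_\beta\|_{\dot{H}^1(\heis)}\lesssim(1-\beta)\|(\id-\Pi_{0,+})(|Q_\beta|^2Q_\beta-|Q|^2Q)\|_{L^{4/3}(\heis)}\lesssim(1-\beta)\|Q_\beta-Q\|_{\dot{H}^1(\heis)}$. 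This extra factor of $\|Q_\beta-Q\|_{\dot{H}^1(\heis)}$ is what the paper bootstraps: if $\delta(Q_\beta^+)+\|R_\beta\|_{\dot{H}^1(\heis)}=\gdO((1-\beta)^{\gamma})$, then Proposition \ref{prop:estimates_stability} gives $\|Q_\beta-Q\|_{\dot{H}^1(\heis)}=\gdO((1-\beta)^{\gamma/2})$, whence $\|R_\beta\|_{\dot{H}^1(\heis)}$ and $\delta(Q_\beta^+)$ are $\gdO((1-\beta)^{1+\gamma/2})$; iterating $\gamma_{n+1}=1+\frac{\gamma_n}{2}$ from $\gamma_0=\half$ reaches every exponent below $2$. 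Your estimate of $\delta(Q_\beta^+)$ fails for a parallel reason: $\delta$ is only Lipschitz in the $\dot{H}^1$-distance near $Q$, because the linear terms $2\Re(\rho,Q)_{\dot{H}^{1/2}}$ and $4\Re\int_{\heis}|Q|^2Q\bar\rho$ do not vanish (orthogonality of $\rho$ to the tangent directions of $\M$ does not annihilate them, as $Q$ itself is not a tangent direction). So from $\|\rho\|_{\dot{H}^1(\heis)}=\gdO((1-\beta)^{1-\varepsilon/2})$ you can only conclude $\delta(Q_\beta^+)=\gdO((1-\beta)^{1-\varepsilon/2})$, not $\gdO((1-\beta)^{2-\varepsilon})$. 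The paper avoids this by comparing $Q_\beta^+$ with $Q_\beta$ rather than with $Q$: those differences are controlled by $\|R_\beta\|_{\dot{H}^1(\heis)}$, which the bootstrap makes superlinearly small.

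Two further points. First, your detour through the linearized operator $\mathcal{L}$ is not needed, and your meta-claim that invoking only Proposition \ref{prop:estimates_stability} ``would merely reproduce the exponent $\half$'' is incorrect: the square-root loss is re-paid at each iteration with an improved input, and that is exactly how the paper reaches $2-\varepsilon$ without ever inverting $\mathcal{L}$. Relatedly, your diagnosis of the $\varepsilon$-loss as a borderline or logarithmic phenomenon is a misreading; in the paper it is purely an artifact of the iteration $\gamma\mapsto 1+\frac{\gamma}{2}$ approaching its fixed point $2$ without attaining it in finitely many steps. Second, even granting your coercivity step $\|\rho\|_{\dot{H}^1(\heis)}\lesssim\|\mathcal{L}\rho\|_{\dot{H}^{-1}(\heis)}$ (which itself would require identifying the kernel of $\mathcal{L}$ with the tangent space and matching your $\dot{H}^1$-orthogonality with the orthogonality conditions under which coercivity holds, neither of which is provided by the paper), your argument would at best yield the second display; the first display, which is the actual content of the proposition and the statement from which the paper deduces the second, would remain unproven.
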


\begin{proof}
Assume that we have proven that
\[
\delta(Q_\beta^+)+\|R_\beta\|_{\dot{H}^1(\heis)}=\gdO((1-\beta)^\gamma)
\]
for some exponent $\gamma>0$ (for example we already know that it is true for $\gamma=\frac{1}{2}$), and therefore
\[
\|Q_\beta-Q\|_{\dot{H}^1(\heis)}=\gdO((1-\beta)^{\frac{\gamma}{2}}).
\]
We increase the exponent $\gamma$ by showing that actually
\[
\delta(Q_\beta^+)+\|R_\beta\|_{\dot{H}^1(\heis)}=\gdO((1-\beta)^{1+\frac{\gamma}{2}}).
\]

Indeed, since $R_\beta\in\bigoplus_{(k,\pm)\neq(0,+)}\dot{H}^1(\heis)\cap V_k^\pm$, the norms $\|R_\beta\|_{\dot{H}^1(\heis)}$ and $\|-(\hlapl+\beta D_s)R_\beta\|_{\dot{H}^{-1}(\heis)}$ are equivalent
\begin{align*}
\|R_\beta\|_{\dot{H}^1(\heis)}
	&\leq 2\|-(\hlapl+\beta D_s)R_\beta\|_{\dot{H}^{-1}(\heis)}.
\end{align*}
Projecting the equation satisfied by $Q_\beta$
\[
-\frac{\hlapl+\beta D_s}{1-\beta}Q_\beta=|Q_\beta|^2Q_\beta
\]
on $\bigoplus_{(k,\pm)\neq(0,+)}\dot{H}^1(\heis)\cap V_k^\pm$, we deduce that
\begin{align*}
\|R_\beta\|_{\dot{H}^1(\heis)}
	&\leq 2(1-\beta)\|(\id-\Pi_{0,+})(|Q_\beta|^2Q_\beta)\|_{\dot{H}^{-1}(\heis)}.
\end{align*}
Since $|Q|^2Q=D_sQ\in \dot{H}^{-1}(\heis)\cap V_0^+$, one can make this term appear inside the right term of the inequality :
\begin{align*}
\|R_\beta\|_{\dot{H}^1(\heis)}
	&\leq 2(1-\beta)\|(\id-\Pi_{0,+})(|Q_\beta|^2Q_\beta-|Q|^2Q)\|_{\dot{H}^{-1}(\heis)}\\
	&\leq 2K(1-\beta)\|(\id-\Pi_{0,+})(|Q_\beta|^2Q_\beta-|Q|^2Q)\|_{L^{\frac{4}{3}}(\heis)}.
\end{align*}
Now, since $(\id-\Pi_{0,+})$ defines a bounded operator on $L^{\frac{4}{3}}(\heis)$, there exist $C_1,C_2>0$ such that
\begin{align*}
\|R_\beta\|_{\dot{H}^1(\heis)}
	&\leq C_1(1-\beta)\||Q_\beta|^2Q_\beta-|Q|^2Q\|_{L^{\frac{4}{3}}(\heis)}\\
	&\leq C_2(1-\beta)\|Q_\beta-Q\|_{\dot{H}^1(\heis)}(\|Q_\beta\|_{\dot{H}^1(\heis)}+\|Q\|_{\dot{H}^1(\heis)})^2.
\end{align*}
But since $(Q_\beta)_\beta$ is bounded in $\dot{H}^1(\heis)$, we get that for some $C_3>0$,
\begin{align*}
\|R_\beta\|_{\dot{H}^1(\heis)}
	&\leq C_3(1-\beta)\|Q_\beta-Q\|_{\dot{H}^1(\heis)}\\
	&=\gdO((1-\beta)^{1+\frac{\gamma}{2}}).
\end{align*}

Therefore,
\begin{align*}
|\|Q_\beta^+\|_{\dot{H}^1(\heis)}^2-\|Q_\beta\|_{\dot{H}^1(\heis)}^2|
	&\leq 2\|R_\beta\|_{\dot{H}^1(\heis)}(\|R_\beta\|_{\dot{H}^1(\heis)}+\|Q_\beta\|_{\dot{H}^1(\heis)})\\
	&= \gdO((1-\beta)^{1+\frac{\gamma}{2}})
\end{align*}
and
\begin{align*}
|\|Q_\beta^+\|_{L^4(\heis)}^4-\|Q_\beta\|_{L^4(\heis)}^4|
	&\lesssim \|R_\beta\|_{L^4(\heis)}(\|R_\beta\|_{L^4(\heis)}+\|Q_\beta\|_{L^4(\heis)})^3\\
	&= \gdO((1-\beta)^{1+\frac{\gamma}{2}}),
\end{align*}
which means that
\[
\delta(Q_\beta^+)= \gdO((1-\beta)^{1+\frac{\gamma}{2}}).
\]
It now remains to consider the sequence $\gamma_{n+1}=1+\frac{\gamma_n}{2}$, $\gamma_0=\half$, which is convergent to $2$.

\end{proof}

\bibliography{mybib}{}
\bibliographystyle{abbrv}

\Addresses

\end{document}